	\title{The density and minimal gap of visible points in some planar quasicrystals \thanks{Partially supported by the Swedish Research Council Grant 2016-03360.}}
	\author{Gustav Hammarhjelm}
	\newcommand{\ASL}[2]{\mathrm{ASL}(#1,#2)}
	\newcommand{\bb}[1]{\mathbb{#1}}
	\newcommand{\bs}{\backslash}
	\newcommand{\ceil}[1]{\lceil#1\rceil}
	\newcommand{\GL}[2]{\mathrm{GL}(#1,#2)}
	\newcommand{\inv}[1]{#1^{-1}}
	\newcommand{\liminfl}[1]{\underset{#1\to\infty}{\liminf}\,}
	\newcommand{\limsupl}[1]{\underset{#1\to\infty}{\limsup}\,}
	\newcommand{\mc}[1]{\mathcal{#1}}
	\newcommand{\norm}[1]{\left\| #1\right\|}
	\newcommand{\SL}[2]{\mathrm{SL}(#1,#2)}
	\newcommand{\smpt}[1]{\setminus\{#1\}}
	\newcommand{\vol}{\mathrm{vol}}
	\renewcommand{\a}{\alpha}
	\renewcommand{\d}{\delta}
	\newcommand{\e}{\epsilon}
	\newcommand{\g}{\gamma}
	\renewcommand{\k}{\kappa}
	\newcommand{\s}{\sigma}
	\renewcommand{\t}{\tau}
	\newcommand{\z}{\zeta}
	\newcommand{\C}{\bb{C}}
	\newcommand{\Q}{\bb{Q}}
	\newcommand{\R}{\bb{R}}
	\newcommand{\Z}{\bb{Z}}
	\newtheorem{thm}{Theorem}[section]
	\newtheorem*{thm*}{Theorem}
	\newtheorem{lem}[thm]{Lemma}
	\newtheorem{cor}[thm]{Corollary}
	\newtheorem{prop}[thm]{Proposition}
	\theoremstyle{definition}
	\newtheorem{defn}[thm]{Definition}
	\theoremstyle{remark}
	\newtheorem*{rem}{Remark}
	\theoremstyle{remark}
	\theoremstyle{remark}
	\renewcommand\footnotemark{}
\begin{document}
	
	
	
	\maketitle
	
	\begin{abstract}
	\noindent We give formulas for the density of visible points of several families of planar quasicrystals, which include the Ammann--Beenker point set and vertex sets of some rhombic Penrose tilings. These densities are used in order to calculate the limiting minimal normalised gap between the angles of visible points in two families of planar quasicrystals, which include the Ammann--Beenker point set and vertex sets of some rhombic Penrose tilings.
	\end{abstract}
	
	\section{Introduction}
	\label{secIntro}
	Given a locally finite point set $\mc{P}\subset \R^d$, let $\widehat{\mathcal{P}}=\{x\in \mathcal{P}\mid tx\notin \mathcal{P}, \forall t\in (0,1)\}$ denote the subset of points  that are \textit{visible} from the origin. If $\mc{P}\subset \R^2$, then within each finite horizon $T>0$, an observer located at the origin will see points in a finite number of directions, which correspond to the arguments of the visible points within $\widehat{\mc{P}}_T:=\widehat{\mc{P}}\cap B_T(0)$, where $B_T(0)=\{x\in \R^2: |x|<T\}$. For a large family of point sets $\mc{P}\subset \R^2$, including regular cut-and-project sets, the directions of visible points in $\widehat{\mc{P}}_T$ become uniformly distributed in $(-\pi,\pi]$ as $T\to\infty$. In this paper, we consider the fine-scale statistics of the distribution of visible points, i.e.\ the limiting distribution of normalised gaps between the angles of visible points in a locally finite point set $\mathcal{P}\subset \bb{R}^2$.
	
	For $T\in \bb{R}_{>0}$, let $\widehat{N}(T)=\#\widehat{\mathcal{P}}_T$. Let $-\pi<\alpha(x)\leq \pi$ denote the argument of $x\in \bb{R}^2$ viewed as a complex number and arrange $\frac{\alpha(x)}{2\pi}$, $x\in \#\widehat{\mathcal{P}}_T$, in increasing order as
	\begin{equation}
	\label{eqnXiHat}-\tfrac{1}{2}<\widehat{\xi}_{T,1}<\widehat{\xi}_{T,2}< \cdots<\widehat{\xi}_{T,\widehat{N}(T)}\leq \tfrac{1}{2},\end{equation}	
	Define also $\widehat{\xi}_{T,0}=\widehat{\xi}_{T,\widehat{N}(T)}-1$. Given an integer $1\leq i\leq \widehat{N}_T$, let $\widehat{d}_{T,i}=\widehat{N}(T)(\widehat{\xi}_{T,i}-\widehat{\xi}_{T,i-1})$. We call $\widehat{d}_{T,i}$ a \textit{normalised gap} (between the angles of visible points) in $\mc{P}$. Let also \[\widehat{\delta}_T=\underset{1\leq i \leq\widehat{N}(T)}{\min}\widehat{d}_{T,i}.\] 
	Form the probability measure 
	\[\mu_T=\frac{1}{\widehat{N}_T}\sum_{i=1}^{\widehat{N}_T}\delta_{\widehat{d}_{T,i}},\]
	where $\delta_x$ is the Dirac measure of $x\in \R$. Let $F_T:\R\longrightarrow [0,1]$ be the complementary distribution function of $\mu_T$, that is
	\begin{equation}
	\label{eqnLimMinGap}
	F_T(s)=\frac{\#(1\leq i\leq \widehat{N}_T\mid \widehat{d}_{T,i}\geq s)}{\widehat{N}_T}=\mu_T([s,\infty)).
	\end{equation} If $\mu_T$ converges weakly to a Borel probability measure $\mu$ on $\R$, or equivalently, $F_T(s)$ converges to $F(s):=\mu([s,\infty))$ at all continuity points of $F$, we say that the limiting distribution of normalised gaps between the angles of visible points exists. In this case, we call $F$ \textit{the limiting distribution of normalised gaps in} $\mc{P}$. A natural question is to determine for which point sets $\mc{P}$ the measure $\mu$ and the corresponding limiting distribution $F$ exists.
	
	In \cite{boca2000distribution}, Boca, Cobeli and Zaharescu proved that the limiting distribution of minimal gaps $F$ exists as a continuous function in the case $\mc{P}=\bb{Z}^2$, and gave the following explicit formula
	\begin{equation}
	\label{eqnLimGapDistrZ2}
	-F'(s)=\begin{cases}
	0, & s\leq \tfrac{3}{\pi^2},\\
	\tfrac{6}{\pi^2s^2}\cdot \log\tfrac{\pi^2s}{3}, & \tfrac{3}{\pi^2}\leq s\leq \tfrac{12}{\pi^2},\\
	\tfrac{12}{\pi^2s^2}\cdot \log\left(2/\left(1+\sqrt{1-\tfrac{12}{\pi^2s}}\right)\right), & \tfrac{12}{\pi^2}\leq s.
	\end{cases}\end{equation}
	In particular, they proved
	the existence of a \textit{minimal gap} in the limit, i.e.\ that there is some $m_{\widehat{\mc{P}}}>0$ with
	\begin{equation}
	\label{defnMPhat1}
	m_{\widehat{\mathcal{P}}}=\sup\{\sigma\geq 0\mid F(s)=1~\mathrm{for~ all}~s\in[0,\sigma]\},
	\end{equation} 
	which can be interpreted as a repulsion among directions of visible points.
	By the explicit expression for $F(s)$ given in \eqref{eqnLimGapDistrZ2} it follows that $m_{\widehat{\Z^2}}=\frac{3}{\pi^2}$. 
	
	In \cite{marklof2010distribution}, Marklof and Strömbergsson studied the fine-scale distribution of the directions of points in affine lattices of arbitrary dimension and characterised the distributions in terms of probability measures on associated homogeneous spaces. In particular, their result \cite[Corollary 2.7]{marklof2010distribution} implies that the limiting distribution of minimal gaps $F(s)$ exists continuously when $\mc{P}\subset\R^2$ is an affine lattice. In \cite{baake2014radial}, Baake, Götze, Huck and Jakobi numerically computed the  normalised gaps $\widehat{d}_{T,i}$ for large $T$, in prominent examples of planar quasicrystals such as the Ammann--Beenker point set (see \Cref{fig:AB} below) and the Tübingen triangle tiling. These gaps were then distributed in histograms (cf.\ \Cref{fig:Histogram} below) which were compared to the analytic expression for the limiting distribution of minimal gaps for $\Z^2$ in \eqref{eqnLimGapDistrZ2}.
	
		\begin{figure}[H]
		\centering
		\caption{A part of the Ammann--Beenker point set.}
		\begin{subfigure}{.5\textwidth}
			\centering
			\includegraphics[width=0.6\linewidth]{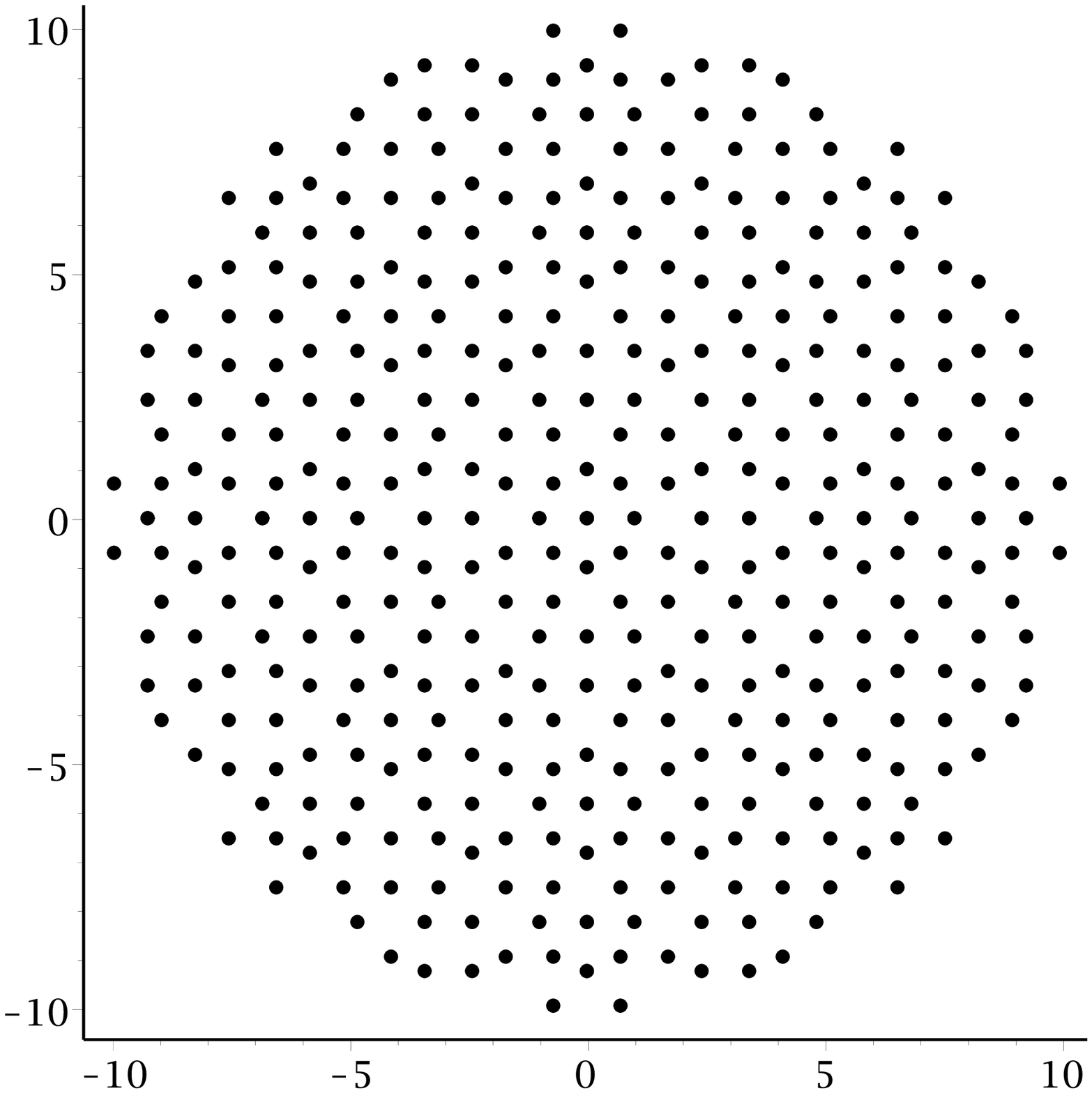}
			\caption{All points in $B_{10}(0)$.}
			\label{fig:ABsub1}
		\end{subfigure}%
		\begin{subfigure}{.5\textwidth}
			\centering
			\includegraphics[width=0.6\linewidth]{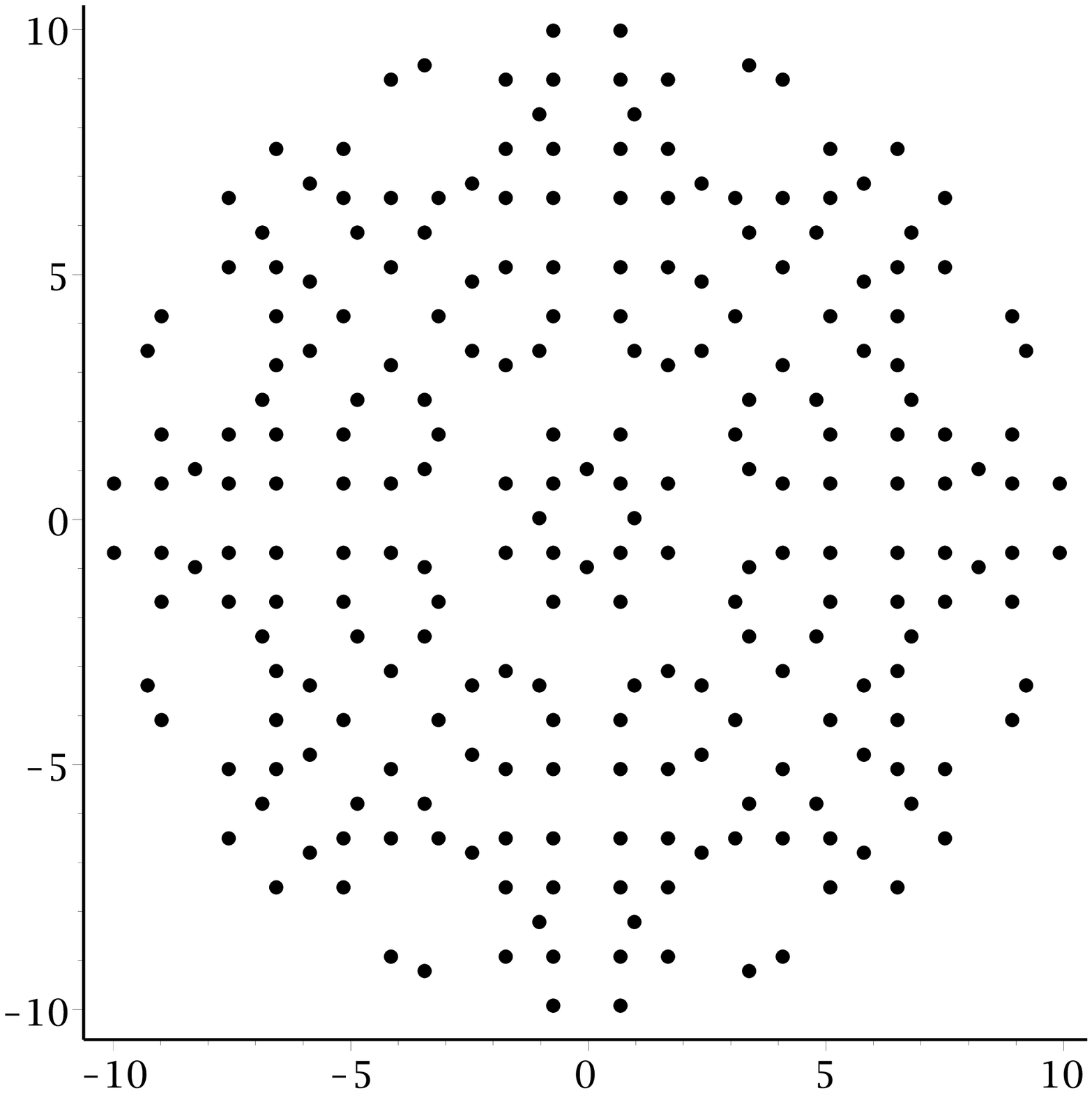}
			\caption{All visible points in $B_{10}(0)$.}
			\label{fig:ABsub2}
		\end{subfigure}
		\label{fig:AB}
	\end{figure} 

	Several gap distributions that were considered in \cite{baake2014radial} exhibited a minimal gap at a fixed, large radius, indicating the existence of a minimal gap in the limit. Furthermore, the shape of the histograms in \cite{baake2014radial} suggest that the limit distributions should exist continuously for the quasicrystals investigated.
	
	\begin{figure}[H]
		\centering
		\includegraphics[width=0.4\linewidth]{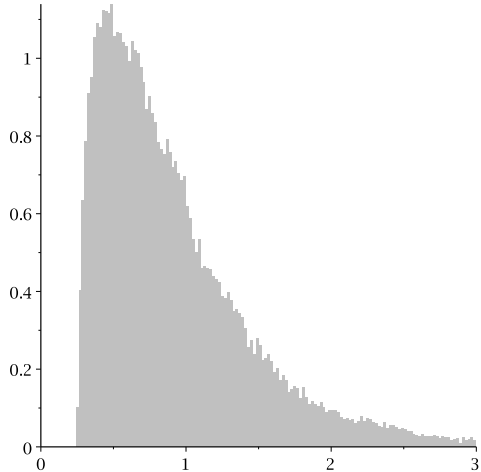}
		\caption{Statistics for the gaps of visible Ammann--Beenker points in $B_{700}(0)$, produced by distributing $\widehat{d}_{700,i}$, $i\in\{1,\ldots,\widehat{N}_{700}\}$, in bins of width $0.02$.}
		\label{fig:Histogram}
	\end{figure}
	
	In \cite[Corollary 3]{marklof2014visibility}, Marklof and Strömbergsson generalised the result from \cite{marklof2010distribution} mentioned above, by proving that for every regular planar cut-and-project set, the limiting distribution of normalised gaps exists as a continuous function, confirming some of the numerical observations in \cite{baake2014radial}. Furthermore, they expressed this limiting distribution explicitly in terms of a probability measure on an associated space of cut-and-project sets. In \cite{marklof2014visibility}, the existence of a positive minimal gap for several quasicrystals was also proved, again confirming numerical observations in \cite{baake2014radial}. For instance, Marklof and Strömbergsson proved the existence of a minimal gap in the Ammann--Beenker point set, as suggested by \Cref{fig:Histogram}.
	
	In this paper, we give formulas for the minimal gap between visible points in two families of quasicrystals, which include the Ammann--Beenker point set and vertex sets of some rhombic Penrose tilings. As we will see in \Cref{secCalcMPhat}, an important ingredient in the calculation of the minimal gap is the \textit{density of visible points} of a set. A locally finite point set $\mathcal{P}\subset \bb{R}^d$ is said to have an \textit{asymptotic density} (or simply \textit{density}) $\theta(\mathcal{P})$ if
	\[\lim_{T\to\infty}\frac{\#(\mathcal{P}\cap TD)}{\vol(TD)}=\theta(\mathcal{P})\]
	holds for all Jordan measurable $D\subset \bb{R}^d$ with $\vol(D)>0$. The density of visible points of a set $\mc{P}$ is thus $\theta(\widehat{\mc{P}})$. 
	
	It is well known that the density exists for a wide variety of point sets, in particular, the density of every regular cut-and-project set exists. In \cite[Theorem 1]{marklof2014visibility}, Marklof and Strömbergsson proved that the density of the subset of visible points of a regular cut-and-project set exists as well. However, the density of visible points of a set is only known explicitly in a few cases; we mention some of those here. For $d\geq 2$, we have $\widehat{\bb{Z}^d}=\{(n_1,\ldots,n_d)\in \bb{Z}^d\mid \gcd(n_1,\ldots,n_d)=1\}$, and the well known result $\theta(\widehat{\bb{Z}^d})=1/\zeta(d)$ gives the probability that $d$ random integers share no common factor. This can be derived in several ways, see for instance \cite{nymann1972probability}; we sketch another proof in \Cref{secDensZn}. More generally, $\theta(\widehat{\mathcal{L}})=\frac{1}{\vol(\bb{R}^d/\mathcal{L})\zeta(d)}$ for any lattice $\mathcal{L}\subset \bb{R}^d$, see e.g.\ \cite[Prop. 6]{baake2000diffraction}. In the presentation \cite{singppt1}, Sing computed the density of visible points in the Ammann--Beenker point set via an adelic approach. In this paper we prove \Cref{thmDensVisA}, which provides a formula for the density of visible points of a family of sets which includes the Ammann--Beenker point set. This result is then extended in \Cref{thmDensVisA2} to cover an even larger family of point sets. In particular, we recover Sing's result through another approach, whose general structure will be applicable to other families of point sets. For instance, we prove \Cref{thmDenVisPenrose}, which establishes a formula for the density of visible points for a family of rhombic Penrose tilings. We will then use these results to give formulas for the limiting minimal gaps in two families of quasicrystals, in \Cref{thmmPhatA} and \Cref{thmMPHatPenrose}, respectively. 

	This paper is organised as follows. First, in \Cref{secDensZn}, a proof of $\theta(\widehat{\bb{Z}^d})=1/\zeta(d)$ is given. In \Cref{secPointSets} the definition of a cut-and-project set is recalled and several families of quasicrystals obtained from the cut-and-project construction are presented. These families include the Ammann--Beenker point set and vertex sets of rhombic Penrose tilings. In \Cref{secDensVis} the density of visible points of sets from the above families are calculated and in \Cref{secCalcMPhat} these results are used to obtain the limiting minimal gap between the visible points for families of sets which include the Ammann--Beenker point set and vertex sets of rhombic Penrose tilings.
	
	\section{The density of visible points of $\bb{Z}^d$}
	\label{secDensZn}
		
	In this section we recall a proof of the well-known result $\theta(\widehat{\bb{Z}^d})=1/\zeta(d)$ for $d\geq 2$. The basic argument of the proof  will be used in later sections when calculating the density of visible points of other point sets.
		
	Fix $T>0$ and a Jordan measurable set $D\subset \bb{R}^d$, and let $\bb{P}\subset \bb{Z}_{>0}$ denote the set of prime numbers. For each \textit{invisible} point  $x\in \bb{Z}^d\setminus \widehat{\bb{Z}^d}$, there is some $p\in\bb{P}$ such that $\frac{x}{p}\in\bb{Z}^d$. For $\bb{Z}^d_*:=\bb{Z}^d\smpt{(0,\ldots,0)}$, there are only finitely many $p_1,\ldots,p_n\in\bb{P}$ such that $p_i\bb{Z}^d_*\cap TD\neq \emptyset$. By inclusion-exclusion counting we have
	\begin{align*}
		\#(\widehat{\bb{Z}^d}\cap TD)&=\#\left((\bb{Z}^d_*\cap TD)\setminus \bigcup_{p\in\bb{P}}(p\bb{Z}^d_*\cap TD)\right)=\#\left((\bb{Z}^d_*\cap TD)\setminus \bigcup_{i=1}^n(p_i\bb{Z}^d_*\cap TD)\right)\\
		&=\#(\bb{Z}^d_*\cap TD)+\sum_{k=1}^{n}(-1)^{k}\Big(\sum_{1\leq i_1<\ldots <i_k\leq n\label{\label{m}}}\#(p_{i_1}\bb{Z}^d_*\cap\cdots\cap p_{i_k}\bb{Z}^d_*\cap TD)\Big).
	\end{align*}
	The last sum can be rewritten as
	$\sum_{m=1}^\infty\mu(m)\#(m\bb{Z}^d_*\cap TD)$,
	where $\mu$ is the Möbius function. Hence
	\[\frac{\#(\widehat{\bb{Z}^d}\cap TD)}{\vol(TD)}=\sum_{m=1}^\infty\frac{\mu(m)\#(m\bb{Z}^d_*\cap TD)}{\vol(TD)}=\sum_{m=1}^\infty\frac{\mu(m)}{m^d}\frac{\#(\bb{Z}^d_*\cap \inv{m}TD)}{\vol(\inv{m}TD)}.\]
	Letting $T\to\infty$, switching order of limit and summation (for instance justified by finding a constant $c$ depending on $D$ such that $\#(\bb{Z}_*^d\cap TD)\leq c\cdot\vol(TD)$ for all $T$) and using $\theta(\bb{Z}^d_*)=1$, it follows that
	\[\theta(\widehat{\bb{Z}^d})=\lim_{T\to\infty}\frac{\#(\widehat{\bb{Z}^d}\cap TD)}{\vol(TD)}=\sum_{m=1}^\infty\frac{\mu(m)}{m^d}=\frac{1}{\zeta(d)}.\]

	\section{Particular families of point sets}
	\label{secPointSets}
	
	In this section we first recall the definition of a cut-and-project set and then introduce three families of such sets which we will consider throughout the remainder of the paper.
	
	\subsection{Cut-and-project sets}
	\label{secCPS}
	Cut-and-project sets are sometimes called (Euclidean) model sets. We will use the same notation and terminology for cut-and-project sets as in \cite[Sec. 1.2]{marklof2014free}. For an introduction to cut-and-project sets, see e.g.\ \cite[Ch. 7.2]{baake2013aperiodic}.

	If $\bb{R}^n=\bb{R}^d\times \bb{R}^m$, let 
	\begin{alignat*}{2}
	\pi:& ~\bb{R}^n\longrightarrow \bb{R}^d  & \pi_{\mathrm{int}}: & ~ \bb{R}^n\longrightarrow \bb{R}^m \\
	& (x_1,\ldots,x_n)\longmapsto (x_1,\ldots,x_d)\hspace{1cm}& & (x_1,\ldots,x_n)\longmapsto (x_{d+1},\ldots, x_n)
	\end{alignat*}
	denote projections onto $\R^d$ and $\R^m$ respectively.

	\begin{defn}
	\label{defnCPS}
	Let $\mathcal{L}\subset \bb{R}^n=\bb{R}^d\times \bb{R}^m$ be a lattice and $\mathcal{W}\subset \overline{\pi_{\mathrm{int}}(\mathcal{L})}$ be a bounded set with non-empty interior. Then the \textit{cut-and-project} set of $\mathcal{L}$ and $\mathcal{W}$ is given by
	\[\mathcal{P}(\mathcal{W},\mathcal{L})=\{\pi(y)\mid y\in \mathcal{L},\pi_{\mathrm{int}}(y)\in\mathcal{W}\}.\]
	\end{defn}

	The set $\mathcal{P}(\mathcal{W},\mathcal{L})$ is uniformly discrete since $\mathcal{W}$ is bounded and relatively dense since $\mathcal{W}^\circ$ is non-empty (cf.\ \cite[Prop. 3.1]{marklof2014free}); hence $\mathcal{P}(\mathcal{W},\mathcal{L})$ is \textit{Delone}. If $\partial \mc{W}$ has measure zero with respect to Haar measure on $\overline{\pi_{\mathrm{int}}(\mathcal{L})}$ we say that $\mathcal{P}(\mathcal{W},\mathcal{L})$ is \textit{regular}. If $\mathcal{L}$ is an affine lattice, i.e.\ $\mc{L}=\mc{L}_0+x$ for some lattice $\mc{L}_0\subset \R^n$ and some $x\in \R^n$, we extend the above definition by letting $\mc{P}(\mc{W},\mc{L})=\mc{P}(\mc{W}-\pi_{\mathrm{int}}(x),\mc{L}_0)+\pi(x)$.
	From \cite[Prop. 3.2]{marklof2014free} we have the following.

	\begin{prop}
	\label{propDensity}
	Let $\bb{R}^n=\bb{R}^d\times \bb{R}^m$ and let $\mathcal{P}=\mathcal{P}(\mathcal{W},\mathcal{L})$ be a regular cut-and-project set such that $\pi\mid_{\mathcal{L}}$ is injective and $\pi_\mathrm{int}(\mc{L})$ is dense in $\R^m$. Then the density $\theta(\mathcal{P})$ exists and is equal to $\frac{\vol(\mathcal{W})}{\vol(\bb{R}^n/\mathcal{L})}$. 
	\end{prop}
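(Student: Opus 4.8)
The plan is to rewrite the counting function $\#(\mc{P}\cap TD)$ as the integral of a single $\mc{L}$-periodic function over a dilated region, and then to finish by Weyl equidistribution on the torus $\T^n:=\R^n/\mc{L}$. Write $g=\mathbf{1}_{[0,1)^d\times\mc{W}}$ for the indicator of the box $[0,1)^d\times\mc{W}\subset\R^n$, and let $\tilde F\colon\T^n\to\R_{\geq 0}$ be its $\mc{L}$-periodization, $\tilde F(z)=\sum_{\lambda\in\mc{L}}g(z+\lambda)$, a finite sum (the box is bounded, $\mc{L}$ discrete), so $\tilde F$ is bounded. Since $\pi\mid_{\mc{L}}$ is injective, $\#(\mc{P}\cap TD)=\#\{y\in\mc{L}\mid \pi(y)\in TD,\ \pi_{\mathrm{int}}(y)\in\mc{W}\}$. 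The affine-lattice case reduces at once to the lattice case by the translation built into the extended definition of $\mc{P}(\mc{W},\mc{L})$, so I treat $\mc{L}$ as a genuine lattice.

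\textbf{Smoothing reduction.} For $t\in\R^d$ put $N(t)=\sum_{y\in\mc{L}}g\big(\pi(y)-t,\pi_{\mathrm{int}}(y)\big)=\tilde F\big((-t,0)\bmod\mc{L}\big)$. By Fubini,
\[\int_{TD}N(t)\,dt=\sum_{\substack{y\in\mc{L}\\ \pi_{\mathrm{int}}(y)\in\mc{W}}}\vol\big(TD\cap(\pi(y)+[0,1)^d)\big).\]
Term by term, this differs from $\#(\mc{P}\cap TD)=\sum_{y:\,\pi_{\mathrm{int}}(y)\in\mc{W}}\mathbf{1}_{TD}(\pi(y))$ only for those $y$ for which $\pi(y)+[0,1)^d$ meets $\partial(TD)=T\,\partial D$, i.e.\ $\pi(y)$ lies within distance $\sqrt d$ of $T\,\partial D$, and each such term is bounded by $1$ in absolute value. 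Since $\mc{P}$ is uniformly discrete (it is Delone), the number of such $y$ is $O\big(\vol((T\,\partial D)^{(\sqrt d+1)})\big)$, which is $o(T^d)$ because $D$ is Jordan measurable, so $\partial D$ is Lebesgue-null. After the substitution $t\mapsto -t$,
\[\#(\mc{P}\cap TD)=\int_{T(-D)}\tilde F\big((t,0)\bmod\mc{L}\big)\,dt+o(T^d).\]

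\textbf{Equidistribution.} It suffices to show that for every Jordan measurable $D'\subset\R^d$ with $\vol(D')>0$,
\[\lim_{T\to\infty}\frac{1}{\vol(TD')}\int_{TD'}\tilde F\big((t,0)\bmod\mc{L}\big)\,dt=\int_{\T^n}\tilde F\,d\mu_{\mathrm{Haar}}=\frac{\vol(\mc{W})}{\vol(\R^n/\mc{L})},\]
where $\mu_{\mathrm{Haar}}$ is the Haar probability measure on $\T^n$; the right-hand value follows by unfolding, $\int_{\T^n}\tilde F\,d\mathrm{Leb}=\int_{\R^n}g=\vol(\mc{W})$. Each character of $\T^n$ has the form $\chi(z)=e^{2\pi i\inner{v}{z}}$ with $v$ in the dual lattice $\mc{L}^{*}$, and $\chi\big((t,0)\big)=e^{2\pi i\inner{v'}{t}}$ where $v'\in\R^d$ is the vector of the first $d$ coordinates of $v$. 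If $v'=0$ then $v=(0,w)$, so $v\in\mc{L}^{*}$ forces $\inner{w}{\cdot}$ to take integer values on the dense set $\pi_{\mathrm{int}}(\mc{L})$, hence on all of $\R^m$, hence $w=0$ and $\chi$ is trivial. For a nontrivial $\chi$ we therefore have $v'\neq 0$, and $\frac{1}{\vol(TD')}\int_{TD'}\chi\big((t,0)\big)\,dt\to 0$ by the Riemann--Lebesgue lemma. This yields the displayed limit first for trigonometric polynomials, then --- the averaging operators being contractions on $C(\T^n)$ --- for all continuous functions on $\T^n$. Finally, regularity of $\mc{P}$ means $\partial\mc{W}$, hence $\partial([0,1)^d\times\mc{W})$, is Haar-null, so $\tilde F$ is Riemann integrable on $\T^n$; squeezing it between continuous functions from above and below with arbitrarily close integrals extends the limit to $\tilde F$ itself, and with $\vol(T(-D))=\vol(TD)$ and the smoothing reduction this gives $\theta(\mc{P})=\vol(\mc{W})/\vol(\R^n/\mc{L})$, independently of $D$.

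I expect the equidistribution step to be the crux. Its heart is the observation that the hypothesis ``$\pi_{\mathrm{int}}(\mc{L})$ dense in $\R^m$'' is exactly what forces every nontrivial Fourier mode on $\T^n$ to average to zero along $\R^d\times\{0\}$, together with the use of the regularity hypothesis to absorb the discontinuity of the window indicator. The smoothing reduction, by contrast, is routine once one knows that $\mc{P}$ is Delone, and the injectivity of $\pi\mid_{\mc{L}}$ enters only to make the lattice-point count equal to the point count of $\mc{P}$.
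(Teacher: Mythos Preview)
The paper does not give its own proof of this proposition; it simply cites \cite[Prop.~3.2]{marklof2014free}. Your argument therefore cannot be compared to an in-paper proof, but it is a correct self-contained derivation.

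Your approach---periodising the indicator $\mathbf{1}_{[0,1)^d\times\mc{W}}$, reducing the lattice-point count to an integral over $TD$ up to an $o(T^d)$ boundary error, and then invoking Weyl equidistribution of the orbit $t\mapsto (t,0)\bmod\mc{L}$ on $\R^n/\mc{L}$---is sound. The key step, that a nontrivial character $\chi_v$ with $v=(0,w)\in\mc{L}^*$ forces $w=0$ because $\langle w,\cdot\rangle$ is integer-valued on the dense set $\pi_{\mathrm{int}}(\mc{L})$, is exactly where the density hypothesis enters, and you identify this correctly. The regularity hypothesis is used precisely where it should be, to make $\tilde F$ Riemann integrable on the torus so the squeeze from continuous functions goes through. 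The boundary estimate $\vol\big((T\partial D)^{(c)}\big)=T^d\vol\big((\partial D)^{(c/T)}\big)=o(T^d)$ is valid for Jordan measurable $D$, and uniform discreteness of $\mc{P}$ converts this to a point count; this uses the Delone property noted after Definition~\ref{defnCPS}.

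One cosmetic slip: with your convention $g(\pi(y)-t,\pi_{\mathrm{int}}(y))$, the set $\{t:\pi(y)-t\in[0,1)^d\}$ is $\pi(y)+(-1,0]^d$ rather than $\pi(y)+[0,1)^d$, but this does not affect the argument. Everything else checks.
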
	

	\subsection{$\mathcal{A}$-sets and $\mathcal{T}$-sets}
	\label{secATsets}
	
	Given $n\geq 2$, let $\zeta=e^{\frac{2\pi i}{n}}$ be an $n$-th root of unity. For $1\leq i\leq n-1$ with $\gcd(i,n)=1$, let $\sigma_i$ be the automorphism of the cyclotomic field $\bb{Q}(\zeta)$ determined by $\sigma_i(\zeta)=\zeta^i$.
	
	Let $n=8$ and $\sigma=\sigma_3$. Note that $\sigma$ induces the non-trivial automorphism of $\Q(\z)\cap \R=\Q(\sqrt{2})$. For a bounded set $\mathcal{W}\subset \bb{C}$, let \[\mathcal{A}_\mathcal{W}=\{x\in\bb{Z}[\zeta]\mid \sigma(x)\in\mathcal{W}\}\]
	and call this an $\mathcal{A}$-set. When $\mathcal{W}$ is the open regular octagon with side length $1$ centered at the origin with sides perpendicularly bisected by the coordinate axes, $\mathcal{A}$ is \textit{the Ammann--Beenker point set}, see \cite[Example 7.8]{baake2013aperiodic}. This set can also be realised as the vertices of a substitution tiling, see \cite[Ch. 6.1 and p. 236]{baake2013aperiodic}. Let \[\mathcal{L}=\{(x_1,x_2,\sigma(x_1),\sigma(x_2))\mid (x_1,x_2)\in\bb{Z}[\sqrt{2}]^2\}\subset\bb{R}^4\] be the Minkowski embedding of $(\bb{Z}[\zeta]\cap\bb{R})^2=\bb{Z}[\sqrt{2}]^2$ in $\bb{R}^4$. Straightforward calculations show that $\mathcal{A}_\mathcal{W}\subset \bb{C}$ can be identified\footnote{Throughout this paper we will frequently identify $\R^2$ and $\C$ in the natural way.} with 
	\begin{equation}
	\label{eqnACPS}
	\mathcal{P}(\mathcal{W}\inv{A},\mathcal{L}B)=\mathcal{P}(\mathcal{W}\inv{A},\mathcal{L})A_1\subset\bb{R}^2,
	\end{equation}
	where \[A=\begin{pmatrix}1 & 0\\[2pt] -\frac{1}{\sqrt{2}} & \frac{1}{\sqrt{2}}\end{pmatrix}\in\GL{2}{\bb{R}}, \quad B=\begin{pmatrix}
	A_1 & 0\\
	0 & I_2
	\end{pmatrix}\in\GL{4}{\bb{R}},\quad A_1=\begin{pmatrix}1 & 0\\[2pt] \frac{1}{\sqrt{2}}& \frac{1}{\sqrt{2}}\end{pmatrix}\] and $I_2$ is the identity matrix.
	This is a cut-and-project set in the sense of \Cref{defnCPS}.

	Let now $n=5$ and $\sigma=\sigma_2$. Note that $\sigma$ induces the non-trivial automorphism of $\Q(\z)\cap \R$. For a bounded set $\mathcal{W}\subset \bb{C}$, let \[\mathcal{T}_\mathcal{W}=\{x\in\bb{Z}[\zeta]\mid \sigma(x)\in\mathcal{W}\}\]
	and call this a $\mathcal{T}$-set. As above, let $\mathcal{L}$ be the Minkowski embedding of $(\bb{Z}[\zeta]\cap\bb{R})^2=\bb{Z}[\tau]^2$ in $\bb{R}^4$, where $\tau=\frac{1+\sqrt{5}}{2}$ is the golden ratio. Then $\mathcal{T}_\mathcal{W}\subset \bb{C}$ can be identified with 	
	\begin{equation}
		\label{eqnTCPS}
		\mathcal{P}(\mathcal{W}\inv{A},\mathcal{L}B)=\mathcal{P}(\mathcal{W}\inv{A},\mathcal{L})A_1\subset\bb{R}^2,
	\end{equation}
	where \[A=\begin{pmatrix}1 & 0\\[2pt]-\frac{\tau}{2} & \frac{\sqrt{\tau+2}}{2\t}\end{pmatrix},\quad B=\begin{pmatrix}
	A_1 & 0\\
	0 & I_2
	\end{pmatrix}\in\GL{4}{\bb{R}},\quad A_1=\begin{pmatrix}1 & 0\\[2pt]
	\frac{1}{2\tau} & \frac{\sqrt{\tau+2}}{2}\end{pmatrix},\] hence $\mathcal{T}_\mathcal{W}$ is a cut-and-project set according to \Cref{defnCPS}. In \cite[Section 4]{baake1990planar}, substitution tilings and corresponding vertex sets are obtained using two triangular tiles. If $\mathcal{W}$ is the closed regular decagon of side length $\sqrt{\frac{\t+2}{5}}$ centered at the origin with two vertices at the $y$-axis, then from e.g.\ \cite[(4.3)]{baake1990planar} one can verify that for almost all $\e\in \C$ the set $\mc{T}_{\mc{W}+\e}$ is the vertex set of such a triangular tiling. In particular, this holds for $\epsilon=0$ which gives a point set with fivefold rotational symmetry, see \cite[Fig. 4.4]{baake1990planar}.
	
	\subsection{$\mathcal{P}$-sets}
	\label{secPsets}
	
	Again, let $n=5$ and $\sigma=\sigma_2$. Let also $\kappa:\bb{Z}[\zeta]\longrightarrow \bb{Z}/5\bb{Z}$ be the ring homomorphism determined by $\kappa(\z)=1$. The kernel of this map is the prime ideal $(1-\z)$ of $\Z[\z]$ generated by $1-\zeta$. Let $\mathcal{W}_1$ be the interior of the convex hull of $\{1,\zeta,\zeta^2,\zeta^3,\zeta^4\}$ in $\C$, an open regular pentagon centered at the origin. Given $\epsilon\in\bb{C}$, let $\mathcal{W}_{1,\epsilon}=\mathcal{W}_1+\epsilon$, $\mathcal{W}_{2,\epsilon}=-\tau\mathcal{W}_1+\epsilon$, $\mathcal{W}_{3,\epsilon}=\tau\mathcal{W}_1+\epsilon$ and $\mathcal{W}_{4,\epsilon}=-\mathcal{W}_1+\epsilon$. Following \cite[Example 7.11]{baake2013aperiodic}, define for $k\in\{1,2,3,4\}$
	\[\Lambda_{k,\epsilon}=\{x\in\bb{Z}[\zeta]\mid \kappa(x)=k,\sigma(x)\in\mathcal{W}_{k,\e}\}\]
	and then define
	\begin{equation}
	\label{eqnDefPenroseEps}
		\mathcal{P}_{\epsilon}=\bigcup_{k=1}^4\Lambda_{k,\epsilon}.
	\end{equation}
	We will call $\mathcal{P}_\e$ a $\mathcal{P}$-set.
	
	For $j\in\{0,\ldots,4\}$, let $v_j=\sqrt{\frac{2}{5}}(\cos(\frac{2\pi j}{5}),\sin(\frac{2\pi j}{5}),\cos(\frac{4\pi j}{5}),\sin(\frac{4\pi j}{5}),\frac{1}{\sqrt{2}})$ and let $g\in \mathrm{SO}(5,\R)$ be the matrix whose $(j+1)$-th row is $v_j$ and let $\mathcal{L}=\Z^5g$.  Let \[\mathcal{W}_\e=\bigcup_{k=1}^4\sqrt{\tfrac{2}{5}}\mathcal{W}_{k,\epsilon}\times \left\{\tfrac{k}{\sqrt{5}}\right\}\subset \R^3.\] Let $\pi$, $\pi_{\mathrm{int}}$ denote the projections from $\R^5$ onto the first two and last three coordinates, respectively. As shown in \cite{marklof2014free}, we have $\overline{\pi_{\mathrm{int}}(\mc{L})}=\R^2\times \frac{1}{\sqrt{5}}\bb{Z}$; note that $\mathcal{W}_\e\subset \overline{\pi_{\mathrm{int}}(\mc{L})}$ is open. Consider now the regular cut-and-project set 
	\begin{equation}
	\label{eqnDefPenroseEps'}\mc{P}_\e':=\mathcal{P}(\mathcal{W}_\e,\mathcal{L}).
	\end{equation} 
	We claim that $\sqrt{\frac{2}{5}}\mathcal{P}_\e=\mc{P}_\e'$.
	
	Indeed, note that $\mc{P}_\e'=\mathcal{P}(\mathcal{W}_\e,\mathcal{L})$ consists of elements of the form $\pi(kg)$ with $k\in \Z^5$ such that $\pi_{\mathrm{int}}(kg)\in\mathcal{W}_\e$. We can identify $\pi(kg)\in \R^5$ with $\sqrt{\frac{2}{5}}x=\sum_{j=0}^4k_j\z^j\in\C$ and $\pi_{\mathrm{int}}(kg)$ with $\left(\sqrt{\frac{2}{5}}\sigma(x),\frac{\sum_{j=0}^4k_j}{\sqrt{5}}\right)$. The claim follows by noting that every $x=\sum_{j=0}^4k_j\z^j\in \Z[\z]$ can be modified so that $\sum_{j=0}^4k_j\in\{0,1,2,3,4\}$ since $1+\z+\z^2+\z^3+\z^4=0$.
	
	A combination of \cite[Theorems 8.1, 11.1]{de1981algebraic} gives the following.
	
	\begin{thm}
		\label{thmdeBruijn}
		Let $\e\in \C$. Then  $\mathcal{P}_{\epsilon}$ is the vertex set of a rhombic Penrose tiling if $\epsilon=\sum_{j=0}^{4}\gamma_j\zeta^{2j}$ for some $\gamma\in\bb{R}^5$ with $\sum_{j=0}^{4}\gamma_j=0$ such that $\epsilon\notin\bigcup_{k=0}^4( \bb{R}\zeta^ki+(1-\zeta))$.
	\end{thm}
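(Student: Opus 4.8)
The plan is to recognise the statement as a translation of de Bruijn's pentagrid theory \cite{de1981algebraic} into the language of the $\mathcal{P}$-sets of \Cref{secPsets}. There are three steps: (i) recall the pentagrid construction and the content of \cite[Theorems~8.1, 11.1]{de1981algebraic}; (ii) identify $\mathcal{P}_\epsilon$ with the vertex set of the de Bruijn pentagrid attached to a parameter $\gamma\in\R^5$ with $\sum_{j=0}^4\gamma_j=0$ and $\sum_{j=0}^4\gamma_j\zeta^{2j}=\epsilon$ (since $\sum_j\zeta^{2j}=0$, the map $\gamma\mapsto\sum_j\gamma_j\zeta^{2j}$ is already onto $\C$ on the hyperplane $\{\sum_j\gamma_j=0\}$, so the first clause of the hypothesis is automatic); and (iii) show that if the pentagrid of such a $\gamma$ is singular, then $\epsilon\in\bigcup_{k=0}^4(\R\zeta^ki+(1-\zeta))$ — the contrapositive of what is needed. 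Granting (i)--(iii), if $\epsilon$ avoids that union the pentagrid is regular, and by \cite[Theorems~8.1, 11.1]{de1981algebraic} its associated rhombic Penrose tiling has vertex set equal, by (ii), to $\mathcal{P}_\epsilon$.

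For (i)--(ii): the de Bruijn pentagrid for $\gamma$ consists of the grids $\mathcal{G}_j=\{z\in\C:\Re(z\zeta^{-j})+\gamma_j\in\Z\}$, $j\in\{0,\dots,4\}$, and to the mesh containing $z$ it assigns the vertex $f(z)=\sum_{j=0}^4\lceil\Re(z\zeta^{-j})+\gamma_j\rceil\zeta^j\in\Z[\zeta]$. Put $K_j(z)=\lceil\Re(z\zeta^{-j})+\gamma_j\rceil$ and $\rho_j(z)=K_j(z)-\Re(z\zeta^{-j})-\gamma_j\in[0,1)$, and decompose $\R^5$ by the finite Fourier transform over $\Z/5\Z$ into its ``constant'', ``physical'' and ``internal'' components, which read off $\sum_jK_j$, $\sum_jK_j\zeta^j$ and $\sum_jK_j\zeta^{2j}=\sigma\big(\sum_jK_j\zeta^j\big)$ respectively. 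Using $\sum_j\zeta^j=\sum_j\zeta^{2j}=0$ and $\sum_j\gamma_j=0$, for $z$ in a mesh interior one gets $\sum_jK_j(z)\in\{1,2,3,4\}$, this value equals $\kappa(f(z))$, and $\sigma(f(z))-\epsilon=\sum_j\rho_j(z)\zeta^{2j}$ with $\epsilon:=\sum_j\gamma_j\zeta^{2j}$. Solving the physical component for $z$ then shows that $x\in\Z[\zeta]$ with $\kappa(x)=k$ is a de Bruijn vertex exactly when $\sigma(x)$ lies in a fixed polygonal acceptance domain; running this standard computation, as in the construction from which \Cref{secPsets} is taken \cite[Example~7.11]{baake2013aperiodic}, identifies that domain with $\mathcal{W}_{k,\epsilon}=c_k\mathcal{W}_1+\epsilon$, $(c_1,c_2,c_3,c_4)=(1,-\tau,\tau,-1)$. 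Hence the de Bruijn vertex set for $\gamma$ is $\bigcup_{k=1}^4\Lambda_{k,\epsilon}=\mathcal{P}_\epsilon$; this re-derives in passing the cut-and-project description $\mathcal{P}_\epsilon'=\mathcal{P}(\mathcal{W}_\epsilon,\mathcal{L})$ of \eqref{eqnDefPenroseEps'}. Finally, replacing $\gamma_j$ by $\gamma_j+\Re(w\zeta^{-j})$ merely translates the whole pentagrid by $w$, changing neither its regularity nor $f(\C)$; so, once $\sum_j\gamma_j=0$, both $\mathcal{P}_\epsilon$ and the regularity of the pentagrid of $\gamma$ depend only on $\epsilon$, and (iii) makes sense as a statement about $\epsilon$ alone.

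For (iii): a point $z_0\in\C$ lies on three grids $\mathcal{G}_r,\mathcal{G}_s,\mathcal{G}_t$, $r<s<t$, iff the three numbers $\Re(z_0\zeta^{-r})+\gamma_r$, $\Re(z_0\zeta^{-s})+\gamma_s$, $\Re(z_0\zeta^{-t})+\gamma_t$ are all integers. The unit vectors $\zeta^r,\zeta^s,\zeta^t$ satisfy a relation $a\zeta^r+b\zeta^s+c\zeta^t=0$ with $(a,b,c)$ proportional to $\big(\sin\tfrac{2\pi(t-s)}{5},-\sin\tfrac{2\pi(t-r)}{5},\sin\tfrac{2\pi(s-r)}{5}\big)$, which are nonzero and lie in $\sin\tfrac{2\pi}{5}\cdot\Z[\tau]$; pairing $z_0$ with this relation yields the $z_0$-free identity $an_r+bn_s+cn_t=a\gamma_r+b\gamma_s+c\gamma_t$. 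De Bruijn's Theorem~11.1 assembles the resulting resonance conditions, over all ten triples, into a description of the singular parameters. Feeding in the Fourier dictionary of the previous paragraph — expressing the $\gamma_j$ through $\epsilon$ and the fixed value $\sum_j\gamma_j=0$ — I expect each such condition to reduce to the requirement that $\Re(\epsilon\zeta^{-k})$ lie in the subgroup $\{\Re(w):w\in(1-\zeta)\}$ of $\R$ for some $k\in\{0,\dots,4\}$, equivalently $\epsilon\in\R\zeta^ki+(1-\zeta)$, where $(1-\zeta)$ is the ideal $\ker\kappa$. This is the direction needed; the reverse inclusion (not required here, but which would upgrade ``if'' to ``iff'') should follow from the same analysis.

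The hard part is step (iii): matching de Bruijn's singular set, stated in his coordinates $(\gamma_0,\dots,\gamma_4)$, with the explicit union $\bigcup_{k=0}^4(\R\zeta^ki+(1-\zeta))$ in the $\epsilon$-plane — in particular, pinning down the translation lattice as the ideal $(1-\zeta)$ and the five directions as $\zeta^ki$. This demands careful bookkeeping of the ceiling offsets and of the normalisation $\sum_j\gamma_j=0$ through the Fourier identification, a check that the $\binom{5}{3}=10$ triples of grids collapse to $5$ distinct families of bad $\epsilon$ (a triple and its complementary pair giving the same line), and consistency between de Bruijn's conventions ($\zeta^{-j}$, ceilings) and those of \Cref{secPsets} (the automorphism $\sigma=\sigma_2$, the pentagons $\mathcal{W}_k$). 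Step (ii) is routine but must be carried out compatibly with those conventions, and step (i) is pure citation.
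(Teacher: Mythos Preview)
The paper does not actually prove \Cref{thmdeBruijn}: it is stated as a direct consequence of \cite[Theorems~8.1, 11.1]{de1981algebraic}, with no further argument given. Your proposal correctly identifies this citation and then goes well beyond what the paper does, by sketching how one would translate de Bruijn's pentagrid construction into the language of $\mathcal{P}$-sets and how the singular/regular dichotomy becomes the condition $\epsilon\notin\bigcup_{k=0}^4(\R\zeta^ki+(1-\zeta))$.

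So there is no discrepancy of approach to report: the paper simply takes the result as known, while you attempt to unpack the translation. Your steps (i)--(ii) are the standard dictionary (and indeed the one behind \cite[Example~7.11]{baake2013aperiodic}, which the paper also cites for the definition of $\mathcal{P}_\epsilon$). You are honest that step (iii) --- matching de Bruijn's description of singular pentagrids in $\gamma$-coordinates with the five translated lines in the $\epsilon$-plane --- is the part requiring careful bookkeeping, and you leave it as an expectation rather than a completed computation. That is a fair assessment; filling it in would require going through de Bruijn's Theorem~11.1 line by line, which is more than the present paper asks of the reader. If your goal is only to match the paper, a one-line citation suffices; if you want a self-contained argument, step (iii) is where the real work remains.
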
 

	In \Cref{lemExOfRegularPenta} we verify that \Cref{thmdeBruijn} holds for all $\gamma\in(\bb{Q}\setminus \bb{Z})^5$ with $\sum_{j=0}^{4}\gamma_j=0$. 
	
	\section{Calculation of densities of visible points}
	\label{secDensVis}
	
	In this section we calculate the density of visible points of families of $\mathcal{A}$-, $\mathcal{T}$- and $\mathcal{P}$-sets after presenting some auxiliary results. Firstly, we have the following lemma, which is immediate from the definitions.
	
	\begin{lem}
	\label{lemDensitiesGeneral}
	Suppose $\mathcal{P}\subset \bb{R}^d$ is locally finite and that $\theta(\mathcal{P})$ exists. Then for any $A\in \GL{d}{\bb{R}}$ we have
	$\widehat{\mathcal{P}A}=\widehat{\mathcal{P}}A$ and $\theta(\mathcal{P}A)=\frac{\theta(\mathcal{P})}{\det(A)}$.
	\end{lem}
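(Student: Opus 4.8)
The plan is to verify both assertions directly from the definitions; the only care needed is in the density computation, where one must track the Jacobian of the linear map $x\mapsto xA$ and use that it preserves Jordan measurability. Throughout, $A$ acts on $\R^d$ on the right, so $\mc{P}A=\{xA\mid x\in\mc{P}\}$, and since $A$ is a linear homeomorphism, $\mc{P}A$ is again locally finite.

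For the first identity I would fix $x\in\mc{P}$, put $y=xA$, and note that $ty=(tx)A$ for every $t\in(0,1)$. Because $z\mapsto zA$ is a bijection $\mc{P}\to\mc{P}A$, this yields $ty\in\mc{P}A\iff tx\in\mc{P}$; hence $y\in\widehat{\mc{P}A}$ --- i.e.\ $ty\notin\mc{P}A$ for all $t\in(0,1)$ --- precisely when $x\in\widehat{\mc{P}}$. Therefore $\widehat{\mc{P}A}=\widehat{\mc{P}}A$.

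For the density statement, fix a Jordan measurable $D\subset\R^d$ with $\vol(D)>0$ and pull the window back through $A$: for every $T>0$,
\[\#(\mc{P}A\cap TD)=\#\{x\in\mc{P}\mid xA\in TD\}=\#\big(\mc{P}\cap T(D\inv{A})\big),\]
where $D\inv{A}=\{z\inv{A}\mid z\in D\}$. Since $\inv{A}$ is a linear automorphism, $D\inv{A}$ is again Jordan measurable with $\vol(D\inv{A})=\vol(D)/|\det A|>0$. Using the homogeneity $\vol(TE)=T^{d}\vol(E)$ of Lebesgue measure, I would then factor
\[\frac{\#(\mc{P}A\cap TD)}{\vol(TD)}=\frac{\#\big(\mc{P}\cap T(D\inv{A})\big)}{\vol\big(T(D\inv{A})\big)}\cdot\frac{\vol(D\inv{A})}{\vol(D)}\]
and let $T\to\infty$: by the hypothesis that $\theta(\mc{P})$ exists, applied to the Jordan measurable set $D\inv{A}$, the first factor tends to $\theta(\mc{P})$, while the second is the constant $1/|\det A|$. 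Hence the limit is $\theta(\mc{P})/|\det A|$, which is the asserted value (the determinant read in absolute value --- immaterial for this paper's applications, where the relevant matrices have positive determinant).

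There is essentially no obstacle here. The only ingredients beyond unwinding definitions are that a linear automorphism maps Jordan measurable sets to Jordan measurable sets --- so that the hypothesis on $\theta(\mc{P})$ may be applied to $D\inv{A}$ --- and the scaling $\vol(TE)=T^{d}\vol(E)$; both are standard.
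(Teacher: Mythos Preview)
Your proof is correct and is exactly the direct verification from the definitions that the paper has in mind; the paper does not actually give a proof, stating only that the lemma ``is immediate from the definitions.'' Your observation that the formula should read $|\det A|$ rather than $\det A$ is also right.
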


	Next, we prove that the density of visible points is unaffected when passing to a subset of full density.
	
	\begin{lem}
	\label{lemDensitiesGeneral2}
	If $\mathcal{P}_1\subset \mathcal{P}_2\subset\bb{R}^d$ are locally finite with $\theta(\mathcal{P}_1)=\theta(\mathcal{P}_2)$ and if $\theta(\widehat{\mathcal{P}_1})$ and  $\theta(\widehat{\mathcal{P}_2})$ both exist, then $\theta(\widehat{\mathcal{P}_1})=\theta(\widehat{\mathcal{P}_2})$.
	\end{lem}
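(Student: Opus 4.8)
The plan is to reduce the statement to a comparison between $\widehat{\mathcal{P}_1}$ and $\widehat{\mathcal{P}_2}$ inside a fixed Jordan measurable region $TD$, and to show that the set of points where the two visible-point sets disagree has density zero. The key observation is the inclusion $\widehat{\mathcal{P}_1}\setminus\widehat{\mathcal{P}_2}\subset \mathcal{P}_1\setminus\mathcal{P}_2$ at the level of \emph{rays}: if $x\in\widehat{\mathcal{P}_1}$ but $x\notin\widehat{\mathcal{P}_2}$, then some $tx$ with $t\in(0,1)$ lies in $\mathcal{P}_2$ but, since $x$ is visible in $\mathcal{P}_1$, that point $tx$ cannot lie in $\mathcal{P}_1$; in particular $tx\in\mathcal{P}_2\setminus\mathcal{P}_1$. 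Conversely, if $x\in\widehat{\mathcal{P}_2}\setminus\widehat{\mathcal{P}_1}$ then either $x\notin\mathcal{P}_1$ (so $x\in\mathcal{P}_2\setminus\mathcal{P}_1$) or $x\in\mathcal{P}_1$ but $x$ is not visible in $\mathcal{P}_1$, which is impossible because $\mathcal{P}_1\subset\mathcal{P}_2$ and visibility in the larger set implies visibility in the smaller. Hence $\widehat{\mathcal{P}_2}\setminus\widehat{\mathcal{P}_1}\subset\mathcal{P}_2\setminus\mathcal{P}_1$ directly.

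First I would make this precise and bound the symmetric difference $\widehat{\mathcal{P}_1}\,\triangle\,\widehat{\mathcal{P}_2}$ inside $TD$. For the easy containment, $\#\big((\widehat{\mathcal{P}_2}\setminus\widehat{\mathcal{P}_1})\cap TD\big)\leq \#\big((\mathcal{P}_2\setminus\mathcal{P}_1)\cap TD\big)$, and since $\theta(\mathcal{P}_1)=\theta(\mathcal{P}_2)$ and both densities exist, $\#\big((\mathcal{P}_2\setminus\mathcal{P}_1)\cap TD\big)=\#(\mathcal{P}_2\cap TD)-\#(\mathcal{P}_1\cap TD)=o(\vol(TD))$ as $T\to\infty$. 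For the other direction, to each $x\in(\widehat{\mathcal{P}_1}\setminus\widehat{\mathcal{P}_2})\cap TD$ I would assign a point $y=y(x)\in(\mathcal{P}_2\setminus\mathcal{P}_1)$ lying on the segment from $0$ to $x$ (e.g.\ the one closest to the origin), which automatically lies in $TD$ if $D$ is star-shaped with respect to $0$; in general one replaces $D$ by a slightly larger star-shaped Jordan measurable set, or argues with $B_T(0)$ first and then transfers via the density hypothesis. The map $x\mapsto y(x)$ need not be injective, but each fibre has size at most one: two distinct visible points of $\mathcal{P}_1$ on the same ray cannot both be visible, so distinct $x$'s lying on distinct rays give distinct $y$'s, and $x$'s on the same ray as $y$ reduce to a single visible point. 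Thus $\#\big((\widehat{\mathcal{P}_1}\setminus\widehat{\mathcal{P}_2})\cap TD\big)\leq \#\big((\mathcal{P}_2\setminus\mathcal{P}_1)\cap T\widetilde{D}\big)=o(\vol(TD))$ for the enlarged region $\widetilde D$.

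Combining, $\#(\widehat{\mathcal{P}_1}\cap TD)-\#(\widehat{\mathcal{P}_2}\cap TD)=o(\vol(TD))$, and dividing by $\vol(TD)$ and letting $T\to\infty$ yields $\theta(\widehat{\mathcal{P}_1})=\theta(\widehat{\mathcal{P}_2})$, using that both limits exist by hypothesis. The main obstacle is the bookkeeping in the second containment: ensuring that the point $y(x)$ on the ray through $x$ stays inside a controlled region and that the fibre-size-one claim is handled cleanly, so that one genuinely bounds a quantity of the form $\#\big((\mathcal{P}_2\setminus\mathcal{P}_1)\cap T\widetilde D\big)$ whose density is zero. This is straightforward once one commits to testing densities on balls (or on an exhausting family of star-shaped Jordan sets) and then invoking the definition of asymptotic density, which by hypothesis applies to \emph{all} Jordan measurable sets of positive volume.
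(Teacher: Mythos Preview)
Your proof is correct and takes essentially the same route as the paper: the paper phrases the reduction via the invisible-set difference $(\mathcal{P}_2\setminus\widehat{\mathcal{P}_2})\setminus(\mathcal{P}_1\setminus\widehat{\mathcal{P}_1})$ rather than the symmetric difference $\widehat{\mathcal{P}_1}\,\triangle\,\widehat{\mathcal{P}_2}$, but the key step---sending each problematic point to a point of $\mathcal{P}_2\setminus\mathcal{P}_1$ on the same ray and bounding the fibre size---is identical. Two minor remarks: your map $x\mapsto y(x)$ is in fact injective (distinct points of $\widehat{\mathcal{P}_1}$ lie on distinct rays), so ``need not be injective'' is over-cautious; and you are more explicit than the paper about the star-shaped requirement on $D$, which the paper uses tacitly when it asserts $x/\alpha\in TD$.
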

	
	\begin{proof} 
	We have $\mathcal{P}_1\setminus\widehat{\mathcal{P}_1}\subset \mathcal{P}_2\setminus\widehat{\mathcal{P}_2}$ and it suffices to show that
	$\theta((\mathcal{P}_2\setminus\widehat{\mathcal{P}_2})\setminus(\mathcal{P}_1\setminus\widehat{\mathcal{P}_1}))=0.$
	To this end, let $D$ be a Jordan measurable set and let $T>0$ be given. Consider the set  \[D_T:=((\mathcal{P}_2\setminus\widehat{\mathcal{P}_2})\setminus(\mathcal{P}_1\setminus\widehat{\mathcal{P}_1}))\cap TD.\]
	We say that $x\in D_T$ is of type 1 if $x\in\mathcal{P}_1\cap TD$. Then, $x\in \widehat{\mathcal{P}_1}\cap TD$ so $x$ there must be $\alpha>1$ with $x/\alpha\in (\mathcal{P}_2\setminus \mathcal{P}_1)\cap TD$; let $\alpha(x)$ be the minimal such $\alpha$. Otherwise we say that $x$ is of type 2. Define a map $f: D_T\longrightarrow ((\mathcal{P}_2/\mathcal{P}_1)\cap TD)\times \{1,2\}$ by $f(x)=(x/\alpha(x),1)$ if $x$ is of type $1$ and $x\mapsto (x,2)$ if $x$ is of type 2. We claim that this map is injective. Indeed, suppose that $(x/\alpha(x),1)=(y/\alpha(y),1)$. Then if $x\neq y$ we can assume that there is $k\in(0,1)$ with $kx=y$, which contradicts $x\in \widehat{\mathcal{P}_1}$ since $y\in\mathcal{P}_1$. 
	
	 Since $\theta(\mathcal{P}_1)=\theta(\mathcal{P}_2)$, it follows that
	$\lim_{T\to\infty}\frac{\#D_T}{\vol(TD)}\leq \lim_{T\to\infty}\frac{2\#((\mathcal{P}_2\setminus\mathcal{P}_1)\cap TD)}{\vol(TD)}=0.$
	\end{proof}

	Note that if $\mathcal{P}=\mathcal{P}(\mathcal{W},\mathcal{L})$ is a regular cut-and-project set, then with $\mathcal{P}_1=\mathcal{P}(\overline{\mathcal{W}},\mathcal{L})$ we have $\theta(\widehat{\mc{P}})=\theta(\widehat{\mc{P}_1})$ by \cite[(2.4)]{marklof2014visibility} and the last part of \Cref{lemDensitiesGeneral2}, since we know that $\theta(\widehat{\mc{P}})$, $\theta(\widehat{\mc{P}_1})$ both exist by \cite[Theorem 1]{marklof2014visibility}.

	It can be shown that for a locally finite point set $\mc{P}$ in $\R^2$, the subset of visible points of Lebesgue almost every translate of $\mc{P}$ has full density. In \cite{marklof2010distribution}, it is observed that if $\Z^2_\a:=\alpha+\Z^2$ contains invisible points on two distinct lines through the origin, then $\alpha\in \Q^2$, which implies that $\theta(\widehat{\Z^2_\a})=\theta(\Z^2)$ for all $\alpha\in\bb{R}^2\setminus \Q^2$. Thus $\widehat{\Z^2_\a}$ does not have full density only if $\alpha\in\Q^2$. The following result is similar.

	\begin{prop}
		\label{propDensVisGeneric} 	Let $K$ be a real number field. Let $\mathcal{P}\subset K^2$ and $\alpha\in\bb{R}^2$ be given. If there is a line through the origin that contains two points of $\alpha+\mathcal{P}$ then $\dim\mathrm{span}_K\{1,\alpha_1,\alpha_2\}\leq 2$. If there are two distinct lines through the origin that contain two points of $\alpha+\mc{P}$ then $\alpha\in K^2$.
	\end{prop}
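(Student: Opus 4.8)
The plan is to translate ``lies on a line through the origin'' into the vanishing of a $2\times 2$ determinant and then to read off $K$-linear relations among $1,\alpha_1,\alpha_2$ from the resulting polynomial identities; this is the natural analogue of the observation for $\Z^2$ recalled above. Before doing so I would dispose of a degenerate case: if any of the points of $\alpha+\mathcal{P}$ under consideration equals the origin, say $\alpha+p=0$ with $p\in\mathcal{P}$, then $\alpha=-p\in K^2$ and both conclusions hold trivially. So assume henceforth that every point of $\alpha+\mathcal{P}$ we use is nonzero.

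For the first assertion, let $\alpha+p$ and $\alpha+q$, with $p\neq q$ in $\mathcal{P}$, lie on a common line through the origin. Being nonzero and collinear with $0$, the vectors $\alpha+p$ and $\alpha+q$ are linearly dependent over $\R$, so $\det(\alpha+p,\alpha+q)=0$. Expanding this determinant and cancelling the $\alpha_1\alpha_2$ terms gives
\[
(q_2-p_2)\alpha_1+(p_1-q_1)\alpha_2+(p_1q_2-p_2q_1)=0,
\]
a $K$-linear relation among $1,\alpha_1,\alpha_2$ since $p,q\in K^2$. It is nontrivial, because $q_2-p_2=0$ and $p_1-q_1=0$ would force $p=q$. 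Hence $1,\alpha_1,\alpha_2$ are linearly dependent over $K$, i.e.\ $\dim\mathrm{span}_K\{1,\alpha_1,\alpha_2\}\leq 2$.

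For the second assertion, suppose additionally that $\alpha+r$ and $\alpha+s$, with $r\neq s$ in $\mathcal{P}$, lie on a second line $\ell_2$ through the origin distinct from the line $\ell_1$ through $\alpha+p,\alpha+q$. The same computation yields
\[
(s_2-r_2)\alpha_1+(r_1-s_1)\alpha_2+(r_1s_2-r_2s_1)=0.
\]
Viewing the two relations as a linear system $M\binom{\alpha_1}{\alpha_2}=-\binom{p_1q_2-p_2q_1}{r_1s_2-r_2s_1}$ with $M\in\mathrm{Mat}_2(K)$, a direct computation gives $\det M=\det(q-p,\,s-r)$. Now $q-p=(\alpha+q)-(\alpha+p)$ is a nonzero vector lying in the line $\ell_1$, and $s-r$ is a nonzero vector lying in $\ell_2$; since $\ell_1\neq\ell_2$ are distinct lines through the origin, these are not proportional, so $\det M\neq 0$. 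Inverting $M$ over $K$ gives $\alpha_1,\alpha_2\in K$, i.e.\ $\alpha\in K^2$.

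The argument is essentially bookkeeping, and I do not anticipate a serious obstacle. The one point meriting a moment's care is checking that the two $K$-linear relations coming from the two lines are genuinely independent; I would settle this precisely via the identity $\det M=\det(q-p,s-r)$ and the distinctness of $\ell_1$ and $\ell_2$, as above.
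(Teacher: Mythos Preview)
Your proof is correct. The approach differs in execution from the paper's, though both are elementary linear algebra. The paper parametrises the collinearity condition: writing $t(m+\alpha)=n+\alpha$ for some real $t\neq 1$, it solves for $\alpha$ as $\alpha=s(m-n)-n$ with $s=\tfrac{t}{1-t}$, then uses $m_i\neq n_i$ for some $i$ to express $s$ in terms of $\alpha_i$ and $K$-data, hence $\alpha_j\in\mathrm{span}_K\{1,\alpha_i\}$; for the second assertion it equates the two parametric expressions for $\alpha$ and solves a $2\times 2$ system in the real unknowns $s_1,s_2$, whose coefficient matrix has columns $m_i-n_i$ and is invertible over $K$ because the direction vectors of distinct lines are independent. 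Your determinant formulation short-circuits the parameter: expanding $\det(\alpha+p,\alpha+q)=0$ gives the $K$-linear relation in one line, and the independence of the two relations is cleanly packaged as $\det M=\det(q-p,s-r)\neq 0$. The two arguments are equivalent up to a change of variables, but yours is somewhat more streamlined and avoids introducing the auxiliary scalar $s$; the paper's version has the minor advantage of exhibiting $\alpha_j$ explicitly as a $K$-affine function of $\alpha_i$. Your separate handling of the degenerate case $\alpha+p=0$ is harmless but unnecessary for the first assertion, since the determinant vanishes and the coefficient vector $(q_2-p_2,p_1-q_1)$ is still nonzero.
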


	\begin{proof}
		Suppose there is a line $\ell$ through the origin that contains two distinct points of $\alpha+\mathcal{P}$, say $m+\alpha$, $n+\alpha$ for some $m,n\in K^2$ with $m\neq n$. Then there is some real $t\neq 1$ with $t(m+\alpha)=n+\alpha$ or equivalently $\frac{tm}{1-t}-\frac{n}{1-t}=\alpha$. Let $s=\frac{t}{1-t}$ so that $1+s=\frac{1}{1-t}$. Thus $sm-(1+s)n=\alpha$. Since $m\neq n$ there is $i$ such that $m_i\neq n_i$. We then have $s=\frac{\alpha_i+n_i}{m_i-n_i}$ and $1+s=\frac{\alpha_i+m_i}{m_i-n_i}$ and we see that $\alpha_j\in \mathrm{span}_K\{1,\alpha_i\}$. The first claim is thus proved.
		
		Suppose now there are distinct lines $\ell_1$, $\ell_2$ through the origin that contain two points of $\alpha+\mathcal{P}$. Thus, there are $m_1,m_2,n_1,n_2\in K^2$ and $t_1,t_2\neq 1$ such that $t_i(m_i+\alpha)=n_i+\alpha$. Since $\ell_i$ are distinct the direction vectors $m_i-n_i$ are not proportional. As above, we have $\alpha=s_im_i-(1+s_i)n_i=s_i(m_i-n_i)-n_i$ for some real numbers $s_i$. Hence we get the following system of equations $s_1(m_1-n_1)-n_1=s_2(m_2-n_2)-n_2$. Since $m_1-n_1$, $m_2-n_2$ are not proportional the system has a unique solution, which has to belong to $K^2$. Hence $\alpha\in K^2$.
	\end{proof}

	It follows that if $\mathcal{P}\subset K^2$ and $\alpha \in \R^2\setminus K^2$, then $\theta(\widehat{\a+\mathcal{P}})=\theta(\mathcal{P})$. This result can be applied to $\mathcal{A}$-sets with $K=\Q(\sqrt{2})$ and $\mathcal{T}$-, $\mathcal{P}$-sets with $K=\Q(\sqrt{\t+2})$, by \eqref{eqnACPS}, \eqref{eqnTCPS}.
	
	Given a locally finite point set $\mathcal{P}\subset \bb{R}^d$, we call $C\subset \R_{>1}$ a \textit{set of occlusion quotients for} $\mc{P}$ if for each $x\in \mc{P}\setminus \widehat{\mc{P}}$ there exists $c\in C$ with $x/c\in\mc{P}$. Note that each locally finite point set $\mc{P}$ has a set of occlusion quotients. Let also $\mathcal{P}_*=\mathcal{P}\setminus\{(0,\ldots,0)\}$. Next, a counting formula for the number of visible points in a bounded set is presented. 	
	\begin{lem}
		\label{lemInclExcl}
		Let $\mathcal{P}\subset \bb{R}^d$ be locally finite and fix a set $C$ of occlusion quotients for $\mc{P}$. Let $T>0$ and a bounded set $D\subset \bb{R}^d$ be given. Then there are only finitely many $c\in C$ such that $\mathcal{P}_*\cap c\mathcal{P}_*\cap TD\neq\emptyset$, and
		\[\#(\widehat{\mathcal{P}}\cap TD)=\sum_{\substack{F\subset C\\\#F<\infty}}(-1)^{\#F}\#\Big(\Big(\mathcal{P}_*\cap \bigcap_{c\in F}c\mathcal{P}_*\Big)\cap TD\Big)\]
		(here the sum ranges over all finite subsets $F$ of $C$; in particular, $F=\emptyset$ gives the term $\#(\mathcal{P}_*\cap TD)$).
	\end{lem}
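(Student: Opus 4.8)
The plan is to mimic the inclusion–exclusion argument used for $\widehat{\bb{Z}^d}$ in \Cref{secDensZn}, but with the set of primes replaced by the set of occlusion quotients $C$. First I would establish the finiteness claim: since $D$ is bounded, $TD\subset B_R(0)$ for some $R>0$, and any $x\in \mathcal{P}_*\cap c\mathcal{P}_*\cap TD$ forces $x/c\in\mathcal{P}_*\cap B_{R/c}(0)$; but $c>1$ means $x/c\in B_R(0)$ as well, and $x\in\mathcal{P}_*\cap B_R(0)$, so both $x$ and $x/c$ lie in the finite set $\mathcal{P}_*\cap B_R(0)$. As $c>1$ is determined by the ratio $|x|/|x/c|$ of norms of two points in this finite set, only finitely many values of $c\in C$ can occur; call them $c_1,\dots,c_n$. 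Consequently, in the sum over finite $F\subset C$, only subsets $F\subset\{c_1,\dots,c_n\}$ contribute a nonzero term, so the sum is actually finite.

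Next I would reduce the identity to ordinary finite inclusion–exclusion. By the definition of a set of occlusion quotients, every $x\in\mathcal{P}_*\setminus\widehat{\mathcal{P}}$ (within $TD$) satisfies $x/c\in\mathcal{P}$ for some $c\in C$, and such $x$ lies in $TD$ so $x/c\in\mathcal{P}_*$ as well (it is nonzero since $x\ne 0$), and by the finiteness just shown such a $c$ may be taken among $c_1,\dots,c_n$. Hence
\[
\widehat{\mathcal{P}}\cap TD=(\mathcal{P}_*\cap TD)\setminus\bigcup_{i=1}^{n}\bigl(c_i\mathcal{P}_*\cap TD\bigr).
\]
Here I must be slightly careful: I would note that $\widehat{\mathcal{P}}\cap TD=\widehat{\mathcal{P}}_*\cap TD$ when $TD$ does not contain the origin, and more generally that the origin, if present, contributes $1$ to both sides and can be treated separately or absorbed by the convention that $0\notin\mathcal{P}_*$. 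Applying the standard inclusion–exclusion principle for the cardinality of a complement of a union of finitely many finite sets $A_i:=c_i\mathcal{P}_*\cap TD$ inside the finite set $\mathcal{P}_*\cap TD$ gives
\[
\#(\widehat{\mathcal{P}}\cap TD)=\sum_{F\subset\{c_1,\dots,c_n\}}(-1)^{\#F}\#\Bigl(\bigl(\mathcal{P}_*\cap TD\bigr)\cap\bigcap_{c\in F}c\mathcal{P}_*\Bigr),
\]
and since all $F\subset C$ not contained in $\{c_1,\dots,c_n\}$ give empty (hence zero-cardinality) intersections with $TD$, the sum may be harmlessly extended to all finite $F\subset C$, which is exactly the claimed formula.

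The only genuinely delicate point is the finiteness step and, relatedly, making sure the intersections $\mathcal{P}_*\cap\bigcap_{c\in F}c\mathcal{P}_*\cap TD$ are finite so that classical inclusion–exclusion applies verbatim; this follows because $\mathcal{P}$ is locally finite and $TD$ is bounded, so $\mathcal{P}_*\cap TD$ is finite and each intersection is a subset of it. I expect no serious obstacle beyond bookkeeping: the argument is essentially the $\widehat{\bb{Z}^d}$ computation with $\{p\bb{Z}^d_*\}$ replaced by $\{c\mathcal{P}_*\}_{c\in C}$, and the role of "only finitely many relevant primes" is played by "only finitely many relevant occlusion quotients."
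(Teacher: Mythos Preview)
Your proposal is correct and follows essentially the same route as the paper: establish that only finitely many $c\in C$ are relevant, then apply finite inclusion--exclusion to the identity $\widehat{\mathcal{P}}\cap TD=(\mathcal{P}_*\cap TD)\setminus\bigcup_i(\mathcal{P}_*\cap c_i\mathcal{P}_*\cap TD)$. The only noteworthy difference is in the finiteness step: the paper argues by contradiction (an infinite sequence of $c_i$'s would, after pigeonholing on the $x_i$'s, produce infinitely many distinct points $x/c_{k_i}$ in a bounded set), whereas you observe directly that any relevant $c$ equals $|x|/|x/c|$ for some pair $x,\,x/c$ in the finite set $\mathcal{P}_*\cap B_R(0)$, hence only finitely many ratios occur. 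Your argument is slightly more direct; the paper's avoids appealing to $|\cdot|$ but is otherwise equivalent. Your side remark about the origin is unnecessary (one checks from the definition that $0\notin\widehat{\mathcal{P}}$, so $\widehat{\mathcal{P}}\subset\mathcal{P}_*$ automatically), but it does no harm.
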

		
	\begin{proof}
		We first claim that the set $C_T:=\{c\in C\mid \mathcal{P}_*\cap c\mathcal{P}_*\cap TD\neq \emptyset\}$ is finite. Indeed, suppose this is not true and pick distinct $c_1,c_2,\ldots\in C_T$ and corresponding $x_i\in \mathcal{P}_*\cap c_i\mathcal{P}_*\cap TD$. Since $\mathcal{P}$ is locally finite, the sequence $x_1,x_2,\ldots$ contains only finitely many distinct elements. Thus, a subsequence $x_{k_1},x_{k_2},\ldots$ which is constant can be extracted, so that $x_{k_i}/c_{k_i}\in \mathcal{P}_*\cap \frac{TD}{c_{k_i}}\subset \mathcal{P}_*\cap B$ are all distinct, contradicting the assumption that $\mathcal{P}$ is locally finite. Here $B$ is some ball centered at $0$ with $TD\subset B$. Thus, we can write $C_T=\{c_1,\ldots,c_n\}$ for some $c_1,\ldots,c_n\in C$. Consequently
		\begin{align*}
			\#(\widehat{\mathcal{P}}\cap TD)&=\#\left((\mathcal{P}_*\cap TD)\setminus\bigcup_{c\in C}(\mathcal{P}_*\cap c\mathcal{P}_*\cap TD)\right)\\
			&=\#(\mathcal{P}_*\cap TD)-\#\left(\bigcup_{i=1}^n(\mathcal{P}_*\cap c_i\mathcal{P}_*\cap TD)\right),
		\end{align*}
		whence the result follows from the inclusion-exclusion counting formula for finite unions of finite sets.
	\end{proof}	
	
	\begin{lem}
		\label{lemLatticeBoxBound}
		For every lattice $\mathcal{L}\subset \bb{R}^d$ and $c>0$, there is a constant $L$ such that if $B=\prod_{i=1}^{d}[a_i,b_i]$ is a box with $b_i-a_i\geq c$ for all $1\leq i\leq d$, then $\#(B\cap\mathcal{L})\leq L\vol(B)$.
	\end{lem}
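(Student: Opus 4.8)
The plan is to compare $\#(\mc{L}\cap B)$ with the volume of a slightly enlarged box, using the tiling of $\bb{R}^d$ by translates of a fundamental domain of $\mc{L}$. Fix a basis $v_1,\ldots,v_d$ of $\mc{L}$ and let $P=\{\sum_{i=1}^d t_iv_i\mid -\tfrac12\le t_i<\tfrac12\}$ be the associated half-open fundamental parallelepiped, so that the translates $\ell+P$, $\ell\in\mc{L}$, are pairwise disjoint and cover $\bb{R}^d$, and $\vol(P)=\vol(\bb{R}^d/\mc{L})=:\Delta>0$. Its closure $\overline{P}$ is bounded, so there is some $r>0$ with $\overline{P}\subset[-r,r]^d$.

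Next I would carry out the volume comparison. For each $\ell\in\mc{L}\cap B$ we have $\ell+P\subset\ell+\overline{P}\subset B+\overline{P}\subset B+[-r,r]^d=\prod_{i=1}^{d}[a_i-r,b_i+r]$, since $\ell\in B$. As the sets $\ell+P$ are pairwise disjoint, summing volumes yields
\[\#(\mc{L}\cap B)\cdot\Delta=\sum_{\ell\in\mc{L}\cap B}\vol(\ell+P)\le\vol\Big(\prod_{i=1}^{d}[a_i-r,b_i+r]\Big)=\prod_{i=1}^{d}(b_i-a_i+2r).\]

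Finally I would invoke the hypothesis $b_i-a_i\ge c$: for each $i$ we have $b_i-a_i+2r\le(b_i-a_i)\bigl(1+\tfrac{2r}{c}\bigr)$, so the right-hand side above is at most $\bigl(1+\tfrac{2r}{c}\bigr)^d\prod_{i=1}^{d}(b_i-a_i)=\bigl(1+\tfrac{2r}{c}\bigr)^d\vol(B)$. Hence the lemma holds with $L=\bigl(1+\tfrac{2r}{c}\bigr)^d/\Delta$. I do not expect any genuine obstacle here, the argument being a routine packing estimate; the only point deserving emphasis is that the lower bound $c$ on the side lengths is essential — a degenerate thin box has volume zero yet may contain many lattice points — and it enters precisely when the "boundary layer" of width $2r$ around $B$ is controlled in terms of $\vol(B)$.
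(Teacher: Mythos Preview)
Your argument is correct. It differs from the paper's proof in that you use a \emph{packing} estimate, placing disjoint fundamental parallelepipeds around each lattice point of $B$ and bounding their total volume by that of the enlarged box $B+[-r,r]^d$, whereas the paper uses a \emph{covering} estimate: it tiles $B$ by at most $n=\prod_i\lceil(b_i-a_i)/c\rceil\le 2^d\vol(B)/c^d$ translates of the cube $[0,c]^d$ and bounds the number of lattice points in each such translate by the finite quantity $L_1=\sup_{t}\#(\mc{L}\cap(t+[0,c]^d))$. Both arguments are standard and short; yours has the minor advantage of producing an explicit constant $L=(1+2r/c)^d/\vol(\bb{R}^d/\mc{L})$ in terms of the covolume and the diameter of a fundamental domain, while the paper's constant involves $L_1$, which is not computed further. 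Either way, the hypothesis $b_i-a_i\ge c$ enters in the same spirit, ensuring that the ``boundary'' contribution is controlled by $\vol(B)$.
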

				
	\begin{proof}
		Let $n_i=\ceil{\frac{b_i-a_i}{c}}\in\bb{Z}_+$. Then
		$\frac{b_i-a_i}{c}\leq n_i< \frac{b_i-a_i}{c}+1=\frac{b_i-a_i+c}{c}<\frac{2(b_i-a_i)}{c}.$
		With $n=\prod_{i=1}^dn_i$ it follows that $n\leq \frac{2^d\vol(B)}{c^d}$. Also, $B$ can be covered by $n$ translates of $[0,c]^d$. Find now $L_1>0$, depending on $\mathcal{L}$ and $c$, such that $\sup_{t\in\bb{R}^d}\#(\mathcal{L}\cap (t+[0,c]^d))=L_1$. Hence
		$\#(B\cap \mathcal{L})\leq n L_1\leq \frac{2^dL_1}{c^d}\vol(B)$, so one can take $L=\frac{2^dL_1}{c^d}$.
	\end{proof}

	Given a real quadratic extension $K$ of $\bb{Q}$, let $\sigma$ denote the non-trivial automorphism of $K$ and let $\sigma(x)=(\sigma(x_1),\ldots,\sigma(x_n))$ for $x=(x_1,\ldots,x_n)\in K^n$. Let $N(x)=x\sigma(x)$ denote the norm of $x\in K$. Let $\epsilon>1$ be the fundamental unit of $\mathcal{O}_K$ and let \[\bb{P}=\{\pi\in\mathcal{O}_K\mid \pi\text{ prime},1<\pi<\epsilon\}.\] Given $y\in\mathcal{O}_K\smpt{0}$, let $\mathcal{L}_y=\{(x,\sigma(x))\mid x\in y\mathcal{O}_K^2\}\subset \bb{R}^4$. Note that for any unit $u\in \mc{O}_K$, $\mathcal{L}_u$ is the Minkowski embedding of $\mathcal{O}_K^2$ in $\bb{R}^4$. Let also $\mathcal{L}'_y=\{(x,\sigma(x))\mid x\in y\mathcal{O}_K\}\subset \bb{R}^2$. 
	
	Let $\mathfrak{I}$ be the set of ideals of $\mathcal{O}_K$ and $\mathfrak{P}\subset \mathfrak{I}$ be the subset of prime ideals. For $I\in\mathfrak{I}$, let $N(I)=\#(\mathcal{O}_K/I)$. For $s\in\bb{C}$ with $\mathrm{Re}(s)>1$, Dedekind's zeta function over $K$ is given by
	\[\zeta_K(s)=\sum_{I\in\mathfrak{I}}\frac{1}{N(I)^s}=\prod_{P\in\mathfrak{P}}\left(1-\frac{1}{N(P)^s}\right)^{-1}.\]

	When $\mathcal{O}_K$ is a unique factorisation domain (and hence also a principal ideal domain), write $\gcd(x,y)=1$ if $x,y$ are relatively prime. In this case, let also $\omega(I)$ be the number of distinct prime factors of any generator of $I$. Define a Möbius function $\mu:\mathfrak{I}\longrightarrow \bb{Z}$ by $\mu(I)=(-1)^{\omega(I)}$ if every generator of $I$ is square-free and $\mu(I)=0$ otherwise. By analogy with the Riemann zeta function we then have
	\begin{equation}
	\label{eqnDedekind}
	\frac{1}{\zeta_K(s)}=\sum_{I\in\mathfrak{I}}\frac{\mu(I)}{N(I)^s}.
	\end{equation}
	
	\begin{lem}
	\label{lemEstimate1}
	Given a real quadratic number field $K$ and bounded sets $\mathcal{W},D\subset \bb{R}^2$, with $\mc{W}$ star-shaped with respect to the origin, there is a constant $L$ such that for all $T>0$ and $y\in \mathcal{O}_K$ 
	\[\#(\mathcal{P}(\mathcal{W},\mathcal{L}_y)_*\cap TD)\leq \frac{LT^2}{N(y)^2}.\]
	\end{lem}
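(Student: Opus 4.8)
The plan is to reduce the estimate to a lattice-point count in a suitably scaled box and then apply \Cref{lemLatticeBoxBound}. Recall that $\mathcal{P}(\mathcal{W},\mathcal{L}_y)_* = \{\pi(z) : z\in\mathcal{L}_y,\ \pi_{\mathrm{int}}(z)\in\mathcal{W}\}\setminus\{0\}$, and that $\mathcal{L}_y = \{(x,\sigma(x)) : x\in y\mathcal{O}_K^2\}$. First I would observe that $\mathcal{L}_y$ is obtained from $\mathcal{L}_u$ (a fixed lattice, the Minkowski embedding of $\mathcal{O}_K^2$) by the linear map that multiplies the first two coordinates by $y$ and the last two by $\sigma(y)$; more precisely, writing $M_0=\mathcal{L}_u$ for the Minkowski embedding of $\mathcal{O}_K^2$, every element of $\mathcal{L}_y$ has the form $(yx,\sigma(y)\sigma(x))$ with $(x,\sigma(x))\in M_0$. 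Then a point $\pi(z)=yx \in \mathcal{P}(\mathcal{W},\mathcal{L}_y)_*\cap TD$ satisfies $yx\in TD$ and $\sigma(y)\sigma(x)\in\mathcal{W}$, i.e.\ $x\in y^{-1}TD$ (in the two ``physical'' coordinates) and $\sigma(x)\in\sigma(y)^{-1}\mathcal{W}$ (in the two ``internal'' coordinates). Hence the count equals $\#\big(M_0 \cap (y^{-1}TD \times \sigma(y)^{-1}\mathcal{W})\big)$, up to the removal of the origin.

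Next I would enclose the region $y^{-1}TD\times\sigma(y)^{-1}\mathcal{W}$ in a box. Since $D$ and $\mathcal{W}$ are bounded, $TD\subset B_{c_1 T}(0)$ and $\mathcal{W}\subset B_{c_2}(0)$ for constants $c_1,c_2$. Multiplication by $y^{-1}$ in $\R^2\cong\C$ scales lengths by $|y|^{-1}$, so $y^{-1}TD$ sits in a box of side $\sim c_1 T/|y|$ in each of the first two coordinates; similarly $\sigma(y)^{-1}\mathcal{W}$ sits in a box of side $\sim c_2/|\sigma(y)|$ in each of the last two. The issue is that one of $|y|,|\sigma(y)|$ may be very small, so the box is thin in the corresponding pair of coordinates and \Cref{lemLatticeBoxBound} does not directly apply. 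To handle this I would instead scale the lattice: apply the invertible linear map $\Phi$ that divides the first two coordinates by $|y|$ and the last two by $|\sigma(y)|$. Then $\Phi(M_0)$ is again a lattice $\Lambda_y$, with covolume $|y|^{-2}|\sigma(y)|^{-2}\operatorname{vol}(\R^4/M_0) = |N(y)|^{-2}\operatorname{vol}(\R^4/M_0)$, since $|y\sigma(y)|=|N(y)|$. Under $\Phi$ the region $y^{-1}TD\times\sigma(y)^{-1}\mathcal{W}$ maps into the \emph{fixed} box $B:=[-c_1T,c_1T]^2\times[-c_2,c_2]^2$, whose volume is $(2c_1T)^2(2c_2)^2 = 16 c_1^2 c_2^2 T^2$.

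Now I need a box-counting bound uniform over the family of lattices $\Lambda_y$. The point is that $\Lambda_y$ has a ``short'' direction only if $|y|$ or $|\sigma(y)|$ is small, but then the density of $\Lambda_y$ compensates. More concretely, I would combine two facts: the first-minimum of $\Lambda_y$ is bounded below because $\mathcal{O}_K$ is discrete (any nonzero $x\in\mathcal{O}_K^2$ with $|x|/|y|$ small forces $|\sigma(x)|/|\sigma(y)|$ large or uses $|N(x_i)|\ge 1$ to bound $|x_i||\sigma(x_i)|$ from below); in fact $\Lambda_y$ contains no nonzero vector of sup-norm $<1$, so a unit box contains at most one point, and a box of side $\ge 1$ in each coordinate of volume $V$ contains $O(V)$ points by covering with unit boxes as in \Cref{lemLatticeBoxBound}. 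But our box $B$ is \emph{thin} in the last two coordinates when $2c_2<1$, which happens only for finitely many shapes — actually $c_2$ is fixed — so I would simply enlarge $B$ to $B' := [-c_1T,c_1T]^2\times[-c_3,c_3]^2$ with $c_3=\max(c_2,1)$ and use that $B'$ can be covered by $O(T^2)$ unit boxes, each meeting $\Lambda_y$ in at most one point; hence $\#(\Lambda_y\cap B')\le L_0 T^2$ for a constant $L_0$ depending only on $K$, $\mathcal{W}$, $D$.

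Finally I would undo the two reductions to recover the factor $N(y)^{-2}$: the sought count is $\#(M_0\cap(y^{-1}TD\times\sigma(y)^{-1}\mathcal{W})) = \#(\Lambda_y\cap\Phi(y^{-1}TD\times\sigma(y)^{-1}\mathcal{W})) \le \#(\Lambda_y\cap B')$. But this is the \emph{wrong} direction — scaling the lattice doesn't conjure a $N(y)^{-2}$. So instead I would \emph{not} rescale, and argue directly: $\#(M_0\cap(y^{-1}TD\times\sigma(y)^{-1}\mathcal{W}))$ counts $x\in\mathcal{O}_K^2$ with $x\in y^{-1}TD$; writing $x'=yx\in y\mathcal{O}_K^2$, this is $\#\{x'\in y\mathcal{O}_K^2 : x'\in TD,\ \sigma(x')\in\mathcal{W}\}$, and $y\mathcal{O}_K$ is a sublattice of $\mathcal{O}_K$ (under Minkowski embedding $\mathcal{L}'_y$) of index $|N(y)|$, hence covolume $|N(y)|$ times that of $\mathcal{O}_K$. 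So $\mathcal{L}_y$ has covolume $N(y)^2\operatorname{vol}(\R^4/M_0)$. Now I apply \Cref{lemLatticeBoxBound} to the \emph{single} lattice $M_0$: the constraint $\sigma(x)\in\mathcal{W}$ confines $\sigma(x)$ to a bounded region, and for each such $\sigma(x)$ there are $O(T^2/|y|^2)$ choices of $x$ with $|yx|\le c_1T$; but the number of admissible $x\in\mathcal{O}_K^2$ with $\sigma(x)\in\sigma(y)^{-1}\mathcal{W}$ — a region of area $O(|\sigma(y)|^{-2})$ — is, since the internal projection of $M_0$ is a lattice in $\R^2$ of fixed covolume (as $\mathcal{O}_K$ is discrete), at most $L_1\max(1,|\sigma(y)|^{-2})$. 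The hard part will be making this last count uniform in $y$ while keeping the product bounded by $T^2/N(y)^2$: one uses $|y||\sigma(y)|=|N(y)|\ge 1$, star-shapedness of $\mathcal{W}$ to discard the ``$\max(1,\cdot)$'' correction (when $|\sigma(y)|\ge 1$ the region $\sigma(y)^{-1}\mathcal{W}$ lies inside $\mathcal{W}$, giving a fixed bound; when $|\sigma(y)|<1$ then $|y|>1$ so $y^{-1}TD\subset TD$ and one counts $O(T^2)$ choices of $x$ but now $|\sigma(y)|^{-2}\le N(y)^{-2}|y|^2$... ), and combines the two regimes to get the single bound $L T^2/N(y)^2$ after absorbing all constants into $L$. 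I expect the bookkeeping of these two regimes — small $|y|$ versus small $|\sigma(y)|$ — to be the main technical obstacle, but star-shapedness of $\mathcal{W}$ is exactly what lets one shrink $\sigma(y)^{-1}\mathcal{W}$ into $\mathcal{W}$ (or vice versa) and close the argument.
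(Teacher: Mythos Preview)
Your reduction to counting points of the fixed Minkowski lattice $M_0=\mathcal{L}_1$ inside $y^{-1}TD\times\sigma(y)^{-1}\mathcal{W}$ is correct and matches the paper. After that, however, the argument does not close. The paper's key move, which you never invoke, is that $\mathcal{L}_y=\mathcal{L}_{uy}$ and $|N(y)|=|N(uy)|$ for every unit $u\in\mathcal{O}_K$, so one may \emph{renormalise} $y$ by a unit. Choosing $u$ so that $T/y$ lies in a fixed interval $[1,\epsilon)$ (or $[\sqrt{c},\epsilon\sqrt{c})$) forces the box $\tfrac{T}{y}B_D\times\tfrac{1}{\sigma(y)}B_{\mathcal{W}}$ to be either so thin in the last two coordinates that it contains no nonzero lattice point (this is where star-shapedness of $\mathcal{W}$ is used), or to have all four side-lengths bounded below by a fixed constant, so that \Cref{lemLatticeBoxBound} applies directly and yields the factor $T^2/N(y)^2$ from the box volume.

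Without this unit-scaling, your two-regime split on $|\sigma(y)|\gtrless 1$ does not suffice: even with $|\sigma(y)|\ge 1$ the first two sides $c_1T/|y|$ can be arbitrarily small, and even with $|\sigma(y)|<1$ the last two sides can be arbitrarily small, so in neither case do you get a box to which \Cref{lemLatticeBoxBound} applies. Your attempt to salvage this via the rescaled lattice $\Lambda_y$ also fails: the claim that $\Lambda_y$ has no nonzero vector of sup-norm $<1$ is false (take $y$ with $|y|,|\sigma(y)|>1$, e.g.\ $y=2$; then $(1,0,1,0)\in M_0$ maps to a vector of sup-norm $1/2$), so the unit-box covering argument gives no uniform bound. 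The honest fix is precisely the unit-renormalisation the paper uses; once you allow it, the two cases become ``$T/|N(y)|$ small'' (count is zero) versus ``$T/|N(y)|$ large'' (box is fat), and the proof is short.
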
 
			
	\begin{proof}
	As $\mathcal{L}_y=\mathcal{L}_{uy}$ for all units $u$, we may without loss of generality assume that $y>0$. Fix $r_D,r_\mathcal{W}>1$ such that $D\subset B_D:=[-r_D,r_D]^2$ and $\mathcal{W}\subset B_\mathcal{W}:=[-r_\mathcal{W},r_\mathcal{W}]^2$. There is a bijection \[\mathcal{P}(\mathcal{W},\mathcal{L}_y)_*\cap TD\longrightarrow\{x\in \mathcal{O}_K^2\mid \sigma(x)\in \tfrac{1}{\sigma(y)}\mathcal{W}\}_*\cap \tfrac{1}{y}TD\] given by $x\mapsto \tfrac{x}{y}$. Now, the right-hand set is in bijection with $\mathcal{L}_*\cap (\tfrac{1}{y}TD\times \tfrac{1}{\sigma(y)}\mathcal{W})$, by $x\mapsto(x,\sigma(x))$. It follows that
	\[\mathcal{P}(\mathcal{W},\mathcal{L}_y)_*\cap TD=\#(\mathcal{L}_*\cap (\tfrac{1}{y}TD\times \tfrac{1}{\sigma(y)}\mathcal{W})).\]
	Note also that the right-hand remains unchanged if $y$ is replaced by $uy$ for any unit $u\in\mc{O}_K$.
	
	Find now $c>0$ such that $c'\leq c$ implies that \[\mathcal{L}_*\cap (\e D\times c'(\mathcal{W}\cup(-\mathcal{W}))=\emptyset.\] This can be done, for otherwise $\mathcal{L}$ would contain elements of arbitrarily small non-zero fourth coordinate within the bounded set $\e D\times (\mathcal{W}\cup(-\mathcal{W}))$, contradicting that $\mathcal{L}$ is a lattice.
	
	Suppose first that $T$ satisfies $\left|\frac{T}{y\sigma(y)}\right|<c$. Scale $y$ by a positive unit such that $1\leq \frac{T}{y}<\e$. This implies that $\frac{1}{|\sigma(y)|}<c$. Then
	\[\mathcal{L}_*\cap (\tfrac{T}{y}D\times \tfrac{1}{\sigma(y)}\mathcal{W})\subset \mathcal{L}_*\cap (\e D\times c(\mathcal{W}\cup(-\mathcal{W})))=\emptyset,\]
	using the fact that $\mc{W}$ is star-shaped with respect to the origin.
	
	Suppose now $T$ satisfies $\left|\frac{T}{y\sigma(y)}\right|\geq c$. Scale $y$ by a positive unit so that $\sqrt{c}\leq \frac{T}{y}<\e\sqrt{c}$. This implies that $\frac{\sqrt{c}}{\e}\leq \frac{1}{|\sigma(y)|}$. It follows that $[0,\sqrt{c}]^4$ is contained in $B:=\frac{T}{y}B_D\times \frac{1}{\sigma(y)}B_\mathcal{W}$. By \Cref{lemLatticeBoxBound} there is a constant $L$, depending on $c$ and $\mathcal{L}$, such that
	\[\#(\mathcal{L}\cap B)\leq L\vol(B)=\frac{16Lr_D^2r_\mathcal{W}^2T^2}{N(y)^2}\]
	and $\mathcal{L}_*\cap (\tfrac{T}{y}D\times \tfrac{1}{\sigma(y)}\mathcal{W})\leq \#(\mathcal{L}\cap B)$.
	\end{proof}
	
	\subsection{$\theta(\widehat{\mathcal{A}_\mathcal{W}})$ for certain $\mathcal{W}$}
	\label{secdensAsets}
	
	Let $\z=e^{\frac{2\pi i}{8}}$, so that $K=\bb{Q}(\zeta)\cap \bb{R}=\bb{Q}(\sqrt{2})$, $\mathcal{O}_K=\bb{Z}[\sqrt{2}]$ and $\bb{Z}[\zeta]=\mathcal{O}_K\oplus \mathcal{O}_K\zeta$. Let $\sigma$ be the automorphism of $K$ given by $\z\mapsto\z^3$. Note that $\mathcal{O}_K$ is a Euclidean domain with fundamental unit $\lambda=1+\sqrt{2}$. Let $W_1$ denote the family of all Jordan measurable $\mathcal{W}\subset\bb{C}$ which are star-shaped with respect to the origin and satisfy $-\mathcal{W}\subset \sqrt{2}\mathcal{W}$.

	\begin{lem}
	\label{lemSizeOfPrimesA}
	For every $\pi\in\bb{P}$ we have $|\sigma(\pi)|\geq \sqrt{2}$.
	\end{lem}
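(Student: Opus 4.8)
The plan is to analyze the prime elements $\pi \in \mathbb{P} = \{\pi \in \mathcal{O}_K \mid \pi \text{ prime}, 1 < \pi < \epsilon\}$ with $K = \mathbb{Q}(\sqrt 2)$, $\mathcal{O}_K = \mathbb{Z}[\sqrt 2]$, $\epsilon = \lambda = 1 + \sqrt 2$, by looking at the rational prime $p$ lying below $\pi$, i.e.\ the prime $p \in \mathbb{Z}$ with $\pi \mid p$. There are two cases: either $p$ is inert in $\mathcal{O}_K$, in which case $\pi$ is an associate of $p$ and $|N(\pi)| = p^2$; or $p$ splits (or ramifies), in which case $\pi$ has prime norm $|N(\pi)| = |\pi\sigma(\pi)| = p$.

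First I would dispose of the inert case: if $\pi = u p$ for a unit $u$ and rational prime $p \geq 2$, then since $1 < \pi < \epsilon$ and $\epsilon^2 = 3 + 2\sqrt 2 < 6$, no inert prime can satisfy $1 < \pi < \epsilon$ unless... actually here one must be slightly careful, since associates of $p$ can be made small; but $|\sigma(\pi)| = |\sigma(u)| \cdot p$ and $|\sigma(u)| = 1/|u|$, and $|u| = \pi/p < \epsilon/2 < 2$, so $|\sigma(\pi)| = p/|u| > p/2 \cdot \tfrac{?}{}$—this is getting delicate, so the cleaner route for the inert case is to use $|\pi \sigma(\pi)| = |N(\pi)| = p^2 \geq 4$ together with the constraint $1 < \pi < \epsilon < 2.5$, giving $|\sigma(\pi)| = p^2/\pi > 4/\epsilon > \sqrt 2$ immediately. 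In the split/ramified case $|N(\pi)| = p$, so $|\sigma(\pi)| = p/\pi > p/\epsilon$; this is $\geq \sqrt 2$ as soon as $p \geq 4$, i.e.\ $p \geq 5$, so the only primes needing individual attention are those $\pi \in \mathbb{P}$ lying above $p = 2$ or $p = 3$.

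The main work, and the main obstacle, is therefore the finite check for $p \in \{2, 3\}$. The prime $2 = (\sqrt 2)^2$ ramifies, so the prime above it is $\sqrt 2$ and its associates in the fundamental domain $(1, \epsilon)$; one computes $\sqrt 2 \approx 1.414 \in (1, 1+\sqrt 2)$, and $\sigma(\sqrt 2) = -\sqrt 2$, so $|\sigma(\sqrt 2)| = \sqrt 2$, which meets the bound with equality (this is presumably why the inequality is stated as $\geq \sqrt 2$ rather than $>$). For $p = 3$: since $3 \equiv 3 \pmod 8$ and $2$ is a quadratic residue mod $3$ (as $2 \equiv -1$ and $-1$ is a non-residue mod $3$), actually $3$ is inert in $\mathbb{Z}[\sqrt 2]$ — wait, $2 \bmod 3 = 2$ which is a non-residue, so $x^2 - 2$ is irreducible mod $3$ and $3$ is inert; hence $p = 3$ falls under the already-handled inert case. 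So in fact the only borderline prime is $\sqrt 2$ above $p = 2$, and the generic bound $p/\epsilon \geq \sqrt 2$ covers everything with $p \geq 5$ while $p = 3$ inert gives $|\sigma(\pi)| = 9/\pi > 9/\epsilon > \sqrt 2$. I would organize the writeup as: (1) reduce to the rational prime $p$ below $\pi$; (2) inert case via $|N(\pi)| = p^2 \geq 4$; (3) split/ramified case via $|N(\pi)| = p$, handling $p \geq 5$ by the bound $p/\epsilon > \sqrt 2$ and the single ramified prime $\sqrt 2$ by direct computation, noting $3$ is inert so needs no separate split-case treatment.
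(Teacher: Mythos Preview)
Your argument is correct. The final organization you give---split into the inert case (where $|N(\pi)|=p^2\geq 4$ and $1<\pi<\lambda$ force $|\sigma(\pi)|=p^2/\pi>4/\lambda=4(\sqrt{2}-1)>\sqrt{2}$) and the split/ramified case (where $|N(\pi)|=p$, and $p\geq 5$ gives $|\sigma(\pi)|=p/\pi>5/\lambda>\sqrt{2}$, while $p=3$ is inert and $p=2$ yields the single element $\sqrt{2}\in\bb{P}$ with $|\sigma(\sqrt{2})|=\sqrt{2}$)---is sound and identifies $\sqrt{2}$ as the unique extremal prime.

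The paper's proof is genuinely different and considerably shorter: it simply observes that any counterexample would give a point of the lattice $\mathcal{L}_1'=\{(x,\sigma(x))\mid x\in\bb{Z}[\sqrt{2}]\}$ lying in the bounded box $(1,\lambda)\times(-\sqrt{2},\sqrt{2})$, and then verifies by direct inspection that this intersection is empty. No norm or splitting analysis is used; it is a pure finite check in the Minkowski embedding. Your approach is more conceptual and explains \emph{why} the bound is exactly $\sqrt{2}$ (it is the ramified prime above $2$), whereas the paper's approach is quicker but opaque. Either is perfectly adequate for such a small lemma; yours would generalise more readily to other quadratic fields, while the paper's method is what one would actually carry out for the analogous \Cref{lemSizeOfPrimesT} over $\bb{Z}[\tau]$ as well.
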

	
	\begin{proof}
	Suppose towards a contradiction that there is a prime $\pi\in\bb{P}$ with $|\sigma(\pi)|<\sqrt{2}$. Then $(\pi,\sigma(\pi))\in \mathcal{L}_1'\cap((1,\lambda)\times (-\sqrt{2},\sqrt{2}))$, where the right-hand set is finite, being the intersection of a lattice and a bounded set, and can be verified to be empty by hand.
	\end{proof}
	
	The following proposition establishes visibility conditions in $\mathcal{A}_\mathcal{W}$ (recall the definition of $\mc{A}_\mc{W}$ in \eqref{eqnACPS}). Its statement in the special case of the Ammann--Beenker point set can be found in e.g.\ \cite[p. 427]{baake2013aperiodic}; a proof in this special case is given in \cite[Ch.\ 4]{jakobi2017radial}. Since our statement have weaker assumptions on the window $\mc{W}$ we write out a proof for clarity.
	
	\begin{prop}
	\label{propVisCondA}
	For $\mathcal{W}\in W_1$ we have
	\[\widehat{\mathcal{A}_\mathcal{W}}=\{x=x_1+x_2\zeta\in \mathcal{A}_\mathcal{W}\mid x_1,x_2\in\mc{O}_K,~\gcd(x_1,x_2)=1,~\sigma(x/\lambda)\notin \mathcal{W}\}.\]
	\end{prop}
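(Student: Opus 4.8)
The plan is to characterize visibility in $\mathcal{A}_\mathcal{W}$ by tracking which scalings of a point $x \in \mathcal{A}_\mathcal{W}$ by $t \in (0,1)$ can land back in $\mathcal{A}_\mathcal{W}$. Write $x = x_1 + x_2\zeta$ with $x_1, x_2 \in \mathcal{O}_K = \mathbb{Z}[\sqrt{2}]$, using the decomposition $\mathbb{Z}[\zeta] = \mathcal{O}_K \oplus \mathcal{O}_K\zeta$. If $tx \in \mathcal{A}_\mathcal{W} \subset \mathbb{Z}[\zeta]$ for some $t \in (0,1)$, then $tx \in \mathbb{Z}[\zeta]$, so $t$ must be rational in a suitable sense; more precisely, $tx = \alpha$ with $\alpha \in \mathbb{Z}[\zeta]$ and $\alpha, x$ are $\mathbb{R}$-proportional complex numbers, forcing $t \in \mathbb{Q}$ and in fact $t = a/b$ with $a, b$ coprime integers, $0 < a < b$, such that $bx \in b\mathbb{Z}[\zeta]$ while $a x / b \in \mathbb{Z}[\zeta]$. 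I would reduce to the case of a single prime obstruction: $x$ is invisible iff there is a prime $\pi \in \mathcal{O}_K$ (or a rational prime, handled through factorization in $\mathbb{Z}[\zeta]$) and a unit such that $x/\pi \in \mathbb{Z}[\zeta]$ with $\sigma(x/\pi) \in \mathcal{W}$, after adjusting by units of $\mathcal{O}_K$.

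The key structural point is that $\mathbb{Z}[\zeta]$ is a free $\mathcal{O}_K$-module of rank $2$, and since $\mathcal{O}_K = \mathbb{Z}[\sqrt 2]$ is a PID (indeed Euclidean), divisibility of $x = x_1 + x_2\zeta$ by an element $d \in \mathcal{O}_K$ inside $\mathbb{Z}[\zeta]$ is equivalent to $d \mid x_1$ and $d \mid x_2$ in $\mathcal{O}_K$. So the condition that no common $\mathcal{O}_K$-prime divides both $x_1$ and $x_2$ is exactly $\gcd(x_1, x_2) = 1$. This explains the gcd clause. The remaining subtlety is occlusion by a scaling whose denominator is a unit of $\mathcal{O}_K$ — and the only relevant unit is the fundamental unit $\lambda = 1 + \sqrt 2$ (and its powers), since $\lambda > 1$ but $|\sigma(\lambda)| < 1$, so dividing by $\lambda$ shrinks a point in physical space while potentially keeping its Galois conjugate in the window. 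Thus $x$ is occluded by $x/\lambda$ precisely when $\sigma(x/\lambda) = \sigma(x)/\sigma(\lambda) \in \mathcal{W}$; this is the clause $\sigma(x/\lambda) \notin \mathcal{W}$. Here I would invoke \Cref{lemSizeOfPrimesA}: for $\pi \in \mathbb{P}$ we have $|\sigma(\pi)| \geq \sqrt 2$, so dividing by such a prime enlarges $|\sigma(x)|$ by a factor $\geq \sqrt 2$; combined with the hypothesis $-\mathcal{W} \subset \sqrt 2\,\mathcal{W}$ and star-shapedness, one shows that if $\sigma(x) \in \mathcal{W}$ then $\sigma(x/\pi)$ cannot be forced into $\mathcal{W}$ by the enlargement, ruling out occlusion by non-unit primes beyond the gcd condition — so only the unit $\lambda$ can produce "extra" occlusions.

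Concretely I would organize the proof as a chain of equivalences. Suppose $x \in \mathcal{A}_\mathcal{W}$, so $\sigma(x) \in \mathcal{W}$. \textbf{Step 1:} If $\gcd(x_1, x_2) = d$ is a non-unit, pick a prime $\pi \mid d$; then $x/\pi \in \mathbb{Z}[\zeta]$, $x/\pi = t x$ with $t = 1/\pi \cdot (\text{unit adjustment})$ — more carefully, scale by a suitable power of $\lambda$ to arrange $|1/\pi| \cdot \lambda^k \in (0,1)$, and check $\sigma(x \lambda^k/\pi) \in \mathcal{W}$ using $|\sigma(\lambda)| < 1$, $|\sigma(\pi)| \geq \sqrt 2 > 1$, star-shapedness, and $-\mathcal{W} \subset \sqrt 2 \mathcal{W}$ to stay inside the window; conclude $x$ is invisible. \textbf{Step 2:} If $\sigma(x/\lambda) \in \mathcal{W}$, then $x/\lambda \in \mathbb{Z}[\zeta]$ (since $\lambda$ is a unit of $\mathcal{O}_K$, it is a unit of $\mathbb{Z}[\zeta]$), $x/\lambda = \lambda^{-1} x$ with $\lambda^{-1} \in (0,1)$, so $x$ is invisible. \textbf{Step 3 (converse):} If $x$ is invisible, there is minimal $t \in (0,1)$ with $tx \in \mathcal{A}_\mathcal{W}$; write $tx = \alpha \in \mathbb{Z}[\zeta]$ with $\alpha$ a positive real multiple of $x$ of strictly smaller modulus. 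Minimality and the module structure over the PID $\mathcal{O}_K$ let me factor $x/\alpha$ into primes of $\mathbb{Z}[\zeta]$ and units; any prime factor that is inert or splits to something of $\sigma$-size $\geq \sqrt 2$ is excluded by the window argument in Step 1's reverse direction, leaving that either $\gcd(x_1, x_2) \neq 1$ or the only occluding factor is $\lambda$, i.e. $\sigma(x/\lambda) \in \mathcal{W}$. The main obstacle I anticipate is \textbf{Step 3}: carefully showing that the occluding scalar, after clearing units, must be divisible (in $\mathbb{Z}[\zeta]$) by an $\mathcal{O}_K$-prime dividing both coordinates, so that no occlusion is "hidden" in the interaction between the $\mathcal{O}_K$-module structure and the archimedean window condition — this requires combining the Euclidean/PID property of $\mathbb{Z}[\sqrt 2]$, the size bound $|\sigma(\pi)| \geq \sqrt 2$, and the geometric hypotheses on $\mathcal{W}$ simultaneously.
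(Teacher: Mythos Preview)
Your overall architecture matches the paper's: prove necessity of the two conditions (your Steps 1--2) and then sufficiency (Step 3), and the ingredients you list --- the $\mathcal{O}_K$-module structure of $\mathbb{Z}[\zeta]$, the PID property of $\mathbb{Z}[\sqrt 2]$, \Cref{lemSizeOfPrimesA}, and the geometric hypotheses on $\mathcal{W}$ --- are the right ones. However, there are two concrete problems.

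First, a direction error. You write that dividing by $\pi\in\mathbb{P}$ ``enlarges $|\sigma(x)|$ by a factor $\geq\sqrt 2$''. In fact $|\sigma(x/\pi)|=|\sigma(x)|/|\sigma(\pi)|\leq |\sigma(x)|/\sqrt 2$, so the conjugate \emph{shrinks}. This is precisely why $\sigma(x/\pi)\in\mathcal{W}$ whenever $\sigma(x)\in\mathcal{W}$ (star-shapedness handles $\sigma(\pi)>0$, and $-\mathcal{W}\subset\sqrt 2\,\mathcal{W}$ handles $\sigma(\pi)<0$), which is what makes Step 1 work. Your informal paragraph has the logic reversed, and the extraneous $\lambda^k$-adjustment in Step 1 suggests you are not using that primes in $\mathbb{P}$ are already normalised with $1<\pi<\lambda$, so $1/\pi\in(0,1)$ directly.

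Second, and more seriously, your Step 3 does not address the case where the occluding scalar is $\lambda^k$ with $k\geq 2$. The paper's route is: take $y:=x/\alpha\in\widehat{\mathcal{A}_\mathcal{W}}$; by the necessity direction the coordinates $y_1,y_2$ are coprime, and this forces $\alpha\in\mathbb{Z}[\sqrt 2]$ (not merely $\mathbb{Q}(\sqrt 2)$ --- your sketch does not establish integrality). If $\alpha$ is a non-unit it divides both $x_1,x_2$, so $\gcd(x_1,x_2)\neq 1$; this step is purely algebraic and needs no window argument, contrary to your phrase ``excluded by the window argument in Step 1's reverse direction''. If instead $\alpha=\lambda^k$ with $k\geq 1$, one must still show $\sigma(x/\lambda)\in\mathcal{W}$. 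For $k\geq 2$ this is a genuine computation: from $\sigma(x/\lambda^k)=(-1)^{k-1}\lambda^{k-1}\sigma(x/\lambda)\in\mathcal{W}$ one gets $\sigma(x/\lambda)\in(-1)^{k-1}\lambda^{-(k-1)}\mathcal{W}\subset\mathcal{W}$, using star-shapedness together with $-\mathcal{W}\subset\sqrt 2\,\mathcal{W}$ and crucially $\lambda>\sqrt 2$. Your sketch asserts ``the only occluding factor is $\lambda$'' without justifying why higher powers of $\lambda$ reduce to the first power; this is exactly where the hypothesis $\mathcal{W}\in W_1$ is doing work in the sufficiency direction.
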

	
	\begin{proof}
	We first prove that the visibility conditions are necessary. Suppose that $x=x_1+x_2\zeta\in \mathcal{A}_\mathcal{W}$ and that $x_1,x_2\in\bb{Z}[\sqrt{2}]$ are not relatively prime, i.e.\ there is a prime $\pi\in \bb{P}$ which divides $x_1,x_2$. Then $x/\pi\in \bb{Z}[\zeta]$. By \Cref{lemSizeOfPrimesA}, we have $\sigma(x/\pi)\in \frac{\mathcal{W}}{\sqrt{2}}\cup \left(\frac{-\mathcal{W}}{\sqrt{2}}\right)\subset \mathcal{W}$ since $\mathcal{W}\in W_1$ and hence $x/\pi\in\mathcal{A}_\mathcal{W}$. If $\sigma(x/\lambda)\in \mathcal{W}$, we have $x/\lambda\in \mathcal{A}_\mathcal{W}$. In either case, we have  $x\in\mathcal{A}_\mathcal{W}\setminus\widehat{\mathcal{A}_\mathcal{W}}$.
	
	For sufficiency, suppose $x=x_1+x_2\zeta\in \mathcal{A}_\mathcal{W}\setminus\widehat{\mathcal{A}_\mathcal{W}}$. Then there is some $\alpha>1$ with $x/\alpha\in \mathcal{A}_\mathcal{W}$, which implies that $\alpha\in \bb{Q}(\zeta)\cap \bb{R}=\bb{Q}(\sqrt{2})$. Since $\mathcal{A}_\mathcal{W}$ is locally finite, we may assume that $x/\alpha\in \widehat{\mathcal{A}_\mathcal{W}}$. Write $x/\alpha=y_1+y_2\zeta$ for some $y_1,y_2\in\bb{Z}[\sqrt{2}]$. By necessity, we must have $\gcd(y_1,y_2)=1$ which implies $\alpha\in \bb{Z}[\sqrt{2}]$. If $\alpha$ is not a unit, then $x_1,x_2$ are not relatively prime. Otherwise, $\alpha=\lambda^k$ for some $k\geq 1$. If $k=1$, then $\sigma(x/\lambda)\in\mathcal{W}$ and otherwise
	\[\sigma(x/\lambda^k)=(-1)^{k-1}\lambda^{k-1}\sigma(x/\lambda)\in\mathcal{W},\]
	which implies $\sigma(x/\lambda)\in\mathcal{W}$ since $\mathcal{W}\in W_1$.
	\end{proof}
	
	A consequence of \Cref{lemSizeOfPrimesA} and \Cref{propVisCondA} is that if $\mathcal{W}\in W_1$, then $ C:=\bb{P}\cup\{\lambda\}$ is a set of occlusion quotients for $\mathcal{A}_\mathcal{W}$. For a finite subset $F\subset C$ let $\Pi_F$ denote the product of the elements of $F$. It follows that
	\begin{equation}
	\label{eqnAW}
	\mathcal{A}_\mathcal{W}\cap\bigcap_{c\in F} c\mathcal{A}_\mathcal{W}=\begin{cases}
	\{x\in\bb{Z}[\zeta]\Pi_F\mid \sigma(x)\in\mathcal{W}\} \text{ if }\lambda\notin F,\\
		\{x\in\bb{Z}[\zeta]\Pi_F\mid \sigma(x)\in-\frac{1}{\lambda}\mathcal{W}\} \text{ if }\lambda\in F.
	\end{cases}\end{equation}
	It follows from \eqref{eqnACPS}, \Cref{propDensity} and \Cref{lemDensitiesGeneral} that
	\begin{equation}
	\label{eqnDensitiesIntersection}
	\theta\Big(\mathcal{A}_\mathcal{W}\cap\bigcap_{c\in F} c\mathcal{A}_\mathcal{W}\Big)=\begin{cases}
	\frac{\theta\left(\mathcal{A}_\mathcal{W}\right)}{N(\Pi_F)^2} \text{ if }\lambda\notin F,\\
		\frac{\theta\left(\mathcal{A}_\mathcal{W}\right)}{\lambda^2N(\Pi_F)^2} \text{ if }\lambda\in F.
	\end{cases}\end{equation}
	We are now ready to calculate $\theta(\widehat{\mathcal{A}_\mathcal{W}})$. 
	
	\begin{thm}
		\label{thmDensVisA}
		For $\mathcal{W}\in W_1$ we have
		\[\theta(\widehat{\mathcal{A}_\mathcal{W}})=\sum_{\substack{F\subset C\\\#F<\infty}}(-1)^{\#F}\theta\Big(\mathcal{A}_\mathcal{W}\cap\bigcap_{c\in F} c\mathcal{A}_\mathcal{W}\Big)=\frac{2|\sigma(\lambda)|\theta(\mathcal{A}_\mathcal{W})}{\zeta_K(2)}.\]	
	\end{thm}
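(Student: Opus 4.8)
The plan is to combine the inclusion--exclusion identity of \Cref{lemInclExcl} with a dominated-convergence argument, using \Cref{lemEstimate1} to control the error terms uniformly. Fix a bounded Jordan measurable set $D\subset\bb{R}^2$ with $\vol(D)>0$. Since $C=\bb{P}\cup\{\lambda\}$ is a set of occlusion quotients for $\mathcal{A}_\mathcal{W}$ (as noted after \Cref{propVisCondA}), \Cref{lemInclExcl} gives, for every $T>0$,
\[
\frac{\#(\widehat{\mathcal{A}_\mathcal{W}}\cap TD)}{\vol(TD)}=\sum_{\substack{F\subset C\\ \#F<\infty}}(-1)^{\#F}\,\frac{\#\Big(\big((\mathcal{A}_\mathcal{W})_*\cap\bigcap_{c\in F}c(\mathcal{A}_\mathcal{W})_*\big)\cap TD\Big)}{\vol(TD)},
\]
a sum with only finitely many nonzero terms for each fixed $T$. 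By \eqref{eqnDensitiesIntersection} each density $\theta(\mathcal{A}_\mathcal{W}\cap\bigcap_{c\in F}c\mathcal{A}_\mathcal{W})$ exists, so each individual summand has a limit as $T\to\infty$ (deleting the origin does not affect it); the heart of the argument is to justify exchanging this limit with the now infinite sum over $F$.

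For this I would establish a bound on the summands that is uniform in $T$ and summable over $F$. By \eqref{eqnAW}, extending the identification \eqref{eqnACPS} to the sublattice $\Pi_F\bb{Z}[\zeta]$, the set $\mathcal{A}_\mathcal{W}\cap\bigcap_{c\in F}c\mathcal{A}_\mathcal{W}$ is identified with $\mathcal{P}(\mathcal{W}'A^{-1},\mathcal{L}_{\Pi_F})A_1$, where $\mathcal{W}'=\mathcal{W}$ if $\lambda\notin F$ and $\mathcal{W}'=-\tfrac1\lambda\mathcal{W}$ if $\lambda\in F$; in either case $\mathcal{W}'A^{-1}$ is bounded and star-shaped with respect to the origin. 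After the linear change of variables $x\mapsto xA_1^{-1}$ (which replaces $D$ by the fixed bounded set $DA_1^{-1}$) and an application of \Cref{lemEstimate1} with window $\mathcal{W}'A^{-1}$ — taking the larger of the two constants arising from $\mathcal{W}'=\mathcal{W}$ and $\mathcal{W}'=-\tfrac1\lambda\mathcal{W}$ — one obtains a constant $L$, depending only on $\mathcal{W}$ and $D$, with
\[
\#\Big(\big((\mathcal{A}_\mathcal{W})_*\cap\bigcap_{c\in F}c(\mathcal{A}_\mathcal{W})_*\big)\cap TD\Big)\le\frac{LT^2}{N(\Pi_F)^2}\qquad\text{for all }T>0\text{ and all finite }F\subset C.
\]
Since $\vol(TD)=T^2\vol(D)$, the $F$-th summand is bounded in absolute value by $L\big(\vol(D)\,N(\Pi_F)^2\big)^{-1}$, independently of $T$. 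Moreover $N(\lambda)^2=1$, so $\sum_{F\subset C,\ \#F<\infty}N(\Pi_F)^{-2}=2\prod_{\pi\in\bb{P}}\big(1+N(\pi)^{-2}\big)$, and this converges: the map $\pi\mapsto(\pi)$ is a bijection from $\bb{P}$ onto the set $\fr{P}$ of prime ideals of the Euclidean domain $\mathcal{O}_K$ (each prime element has a unique positive associate in $(1,\lambda)$), so $\sum_{\pi\in\bb{P}}N(\pi)^{-2}=\sum_{P\in\fr{P}}N(P)^{-2}<\zeta_K(2)<\infty$. The Weierstrass $M$-test then legitimises the interchange; this simultaneously shows that $\theta(\widehat{\mathcal{A}_\mathcal{W}})$ exists and that
\[
\theta(\widehat{\mathcal{A}_\mathcal{W}})=\sum_{\substack{F\subset C\\ \#F<\infty}}(-1)^{\#F}\,\theta\Big(\mathcal{A}_\mathcal{W}\cap\bigcap_{c\in F}c\mathcal{A}_\mathcal{W}\Big),
\]
which is the first claimed equality.

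To evaluate this sum I would split the finite subsets $F\subset C$ according to whether $\lambda\in F$, writing $F=F'$ or $F=\{\lambda\}\cup F'$ with $F'\subset\bb{P}$, and substitute \eqref{eqnDensitiesIntersection}; using $N(\Pi_{\{\lambda\}\cup F'})^2=N(\Pi_{F'})^2$ this gives
\[
\theta(\widehat{\mathcal{A}_\mathcal{W}})=\theta(\mathcal{A}_\mathcal{W})\Big(1-\tfrac1{\lambda^2}\Big)\sum_{\substack{F'\subset\bb{P}\\ \#F'<\infty}}\frac{(-1)^{\#F'}}{N(\Pi_{F'})^2}=\theta(\mathcal{A}_\mathcal{W})\Big(1-\tfrac1{\lambda^2}\Big)\prod_{P\in\fr{P}}\Big(1-\frac{1}{N(P)^2}\Big),
\]
and the Euler product \eqref{eqnDedekind} identifies the last product with $\zeta_K(2)^{-1}$. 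Finally, since $\lambda=1+\sqrt2$ satisfies $x^2-2x-1=0$ one has $1-\lambda^{-2}=2\lambda^{-1}$, and $|\sigma(\lambda)|=|1-\sqrt2|=\lambda^{-1}$, so $1-\lambda^{-2}=2|\sigma(\lambda)|$ and the theorem follows.

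The main obstacle is the exchange of limit and sum in the second paragraph: it rests on the uniform, summable bound furnished by \Cref{lemEstimate1}, and on recognising $\bb{P}$ as a set of representatives for $\fr{P}$ so that the relevant Euler product converges. The remaining steps — rewriting the intersections via \eqref{eqnAW}, inserting \eqref{eqnDensitiesIntersection}, and recognising the Euler product for $\zeta_K$ — are bookkeeping, and the closing identity $1-\lambda^{-2}=2|\sigma(\lambda)|$ is a one-line computation.
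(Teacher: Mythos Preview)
Your proof is correct and follows essentially the same route as the paper: apply \Cref{lemInclExcl}, justify the interchange of limit and sum via the uniform bound from \Cref{lemEstimate1}, then insert \eqref{eqnDensitiesIntersection} and recognise the Euler product for $\zeta_K(2)^{-1}$. The only cosmetic difference is that the paper phrases the interchange as a tail estimate (truncating at $N(\Pi_F)^2\ge\Delta$ and letting $\Delta\to\infty$) whereas you invoke the Weierstrass $M$-test directly; the underlying estimate is the same, and your version has the minor bonus of re-establishing the existence of $\theta(\widehat{\mathcal{A}_\mathcal{W}})$ without appealing to \cite{marklof2014visibility}.
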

	
	\begin{proof}
	Let $D\subset\bb{R}^2$ be a Jordan measurable set with $\vol(D)>0$. Let $T>0$ be given. By \Cref{lemInclExcl}, we have
	\begin{align}\theta(\widehat{\mathcal{A}_\mathcal{W}})&=\lim_{T\to\infty}\frac{\#(\widehat{\mathcal{A}_\mathcal{W}}\cap TD)}{\vol(TD)}\nonumber\\
	&=\lim_{T\to\infty}\sum_{\substack{F\subset C\\\#F<\infty}}(-1)^{\#F}\frac{\#\big(\big((\mathcal{A}_\mathcal{W})_*\cap \bigcap_{c\in F}c(\mathcal{A}_\mathcal{W})_*\big)\cap TD\big)}{\vol(TD)}\label{eqnAW2}
	\end{align}
	
	Note that for each finite subset $F$ of $C$, the corresponding term of the sum in \eqref{eqnAW2} tends to $\theta\Big(\mathcal{A}_\mathcal{W}\cap\bigcap_{c\in F} c\mathcal{A}_\mathcal{W}\Big)$. We begin by proving that the limit in \eqref{eqnAW2} can be calculated termwise.
	
	Let $\Delta>0$ be given. By \Cref{lemSizeOfPrimesA}, there are only finitely many $F\subset C$ with $N(\Pi_F)^2<\Delta$. We have
	\begin{align}
	& \left|\sum_{\substack{F\subset C,\#F<\infty\\N(\Pi_F)^2\geq \Delta}}(-1)^{\#F}\frac{\#\big(\big((\mathcal{A}_\mathcal{W})_*\cap \bigcap_{c\in F}c(\mathcal{A}_\mathcal{W})_*\big)\cap TD\big)}{\vol(TD)}\right|\nonumber\\
	&\quad \leq \sum_{\substack{F\subset C,\#F<\infty\\N(\Pi_F)^2\geq \Delta}}\frac{\#\{x\in\bb{Z}[\zeta]\Pi_F\mid \sigma(x)\in\mathcal{W}\}_*}{\vol(TD)}\leq L\sum_{\substack{F\subset C,\#F<\infty\\N(\Pi_F)^2\geq \Delta}}\frac{1}{N(\Pi_F)^2}\label{eqnEstimates},
	\end{align}
	where the first inequality follows from $-\frac{1}{\lambda}\mathcal{W}\subset \mc{W}$ together with \eqref{eqnAW} and the constant $L$, which is independent of $\Delta$, comes from \Cref{lemEstimate1}. Noting that the right-hand side of \eqref{eqnEstimates} tends to $0$ as $\Delta\to\infty$, we conclude that the limit in \eqref{eqnAW2} can be taken termwise. Hence \eqref{eqnAW} implies
	\[\theta(\widehat{\mathcal{A}_\mathcal{W}})=\sum_{\substack{F\subset C,\#F<\infty\\\lambda\in C}}(-1)^{\#F}\frac{\theta\left(\mathcal{A}_\mathcal{W}\right)}{\lambda^2N(\Pi_F)^2}+\sum_{\substack{F\subset C,\#F<\infty\\\lambda\notin C}}(-1)^{\#F}\frac{\theta\left(\mathcal{A}_\mathcal{W}\right)}{N(\Pi_F)^2},\]
	which is equal to $\left(1-\frac{1}{\lambda^2}\right)\frac{\theta(\mathcal{A}_\mathcal{W})}{\zeta_K(2)}=\frac{2|\sigma(\lambda)|\theta(\mathcal{A}_\mathcal{W})}{\zeta_K(2)}$ by \eqref{eqnDedekind}.
	\end{proof}
	
	From \eqref{eqnACPS} and \Cref{propDensity} it follows that $\theta(\mc{A}_\mc{W})=\frac{\vol(\mc{W})}{4}$. By using results from e.g.\ \cite[Chapter 4]{washington1997introduction}, one can show that $\zeta_K(2)=\frac{\pi^4}{48\sqrt{2}}$; thus the density of $\mathcal{A}_\mathcal{W}$ can be calculated explicitly. 
	
	For every $\mc{W}\in W_1$ with $\vol(\mc{W})>0$, \Cref{thmDensVisA} implies that the relative density of visible points in $\mathcal{A}_\mathcal{W}$ is $\frac{2|\sigma(\lambda)|}{\z_K(2)}=0.5773\ldots$, which is supported numerically by \cite[Table 2]{baake2014radial}. This result also agrees with the calculation in \cite{singppt1}, in the special case where $\mathcal{A}_\mathcal{W}$ is the Ammann--Beenker point set. We also remark that in this case $\theta(\widehat{\mathcal{A}_\mathcal{W}})=\frac{1}{\z_K(2)}$; note the resemblance with $\theta(\widehat{\Z^2})=\frac{1}{\z(2)}$. We provide some numerical support for this result in \Cref{table1} below.
	
	\begin{rem}	
	In \cite[pp. 34--38]{baake2000diffraction}, the set of visible points $\widehat{\Z^d}$ of $\Z^d$ is expressed as an \textit{adelic} cut-and-project set. More precisely, let $\pi$ be the projection from the $d$-adeles $\bb{A}_\bb{Q}^d$ onto $\R^d$ and $\pi_{\mathrm{int}}$ the projection onto the locally compact abelian group $\bb{A}_{\Q,f}^d$ of finite $d$-adeles. Let $\mc{W}=\prod_{p\in\bb{P}}(\Z_p^d\setminus p\Z_p^d)$, where $\bb{P}\subset \Z_+$ is the set of prime numbers. Let $\mathcal{L}$ be the image of the inclusion of $\Q^d$ in $\bb{A}_\bb{Q}^d$, a lattice in $\bb{A}_\bb{Q}^d$. Then
	\[\widehat{\Z^d}=\{\pi(x)\mid x\in \mathcal{L},\pi_{\mathrm{int}}(x)\in\mathcal{W}\}.\] Up to minor technical details, an application of the density formula \cite[Theorem 1]{schlottmann1998cut} for cut-and-project sets over locally compact abelian groups yields $\theta(\widehat{\Z^d)}=\frac{1}{\z(d)}$. In \cite{singppt1}, the density of visible points in the Ammann--Beenker point set was calculated via a similar adelic approach; it would be interesting to try this approach on other point sets.
	\end{rem}
	
	Next, recall from \eqref{eqnACPS} that $\mathcal{A}_\mathcal{W}=\mathcal{P}(\mathcal{W}\inv{A},\mathcal{L})A_1\subset\bb{R}^2$ for some invertible matrices $A,A_1$. As noted above, $\theta(\mc{A}_\mc{W})=\frac{\vol(\mc{W})}{4}$, thus in particular, if $\vol(\mathcal{W})=\vol(\mathcal{W}')$, then $\theta(\mathcal{A}_{\mathcal{W}})=\theta(\mathcal{A}_{\mathcal{W}'})$. This observation together with \Cref{thmDensVisA} implies the following corollary.
	
	\begin{cor}
	\label{corDensVisA1}
	If $\mathcal{W},\mathcal{W}'\in W_1$ satisfies $\vol(\mc{W})=\vol(\mc{W}')$ then $\theta(\widehat{\mathcal{A}_{\mathcal{W}}})=\theta(\widehat{\mathcal{A}_{\mathcal{W}'}})$.
	\end{cor}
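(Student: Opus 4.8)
The plan is to deduce the corollary directly from \Cref{thmDensVisA} together with the observation, recorded just before the corollary, that $\theta(\mathcal{A}_\mathcal{W})$ depends on the window $\mathcal{W}$ only through its volume. First, I would apply \Cref{thmDensVisA} to both $\mathcal{W}$ and $\mathcal{W}'$, which gives
\[
\theta(\widehat{\mathcal{A}_\mathcal{W}})=\frac{2\,|\sigma(\lambda)|}{\zeta_K(2)}\,\theta(\mathcal{A}_\mathcal{W}),\qquad \theta(\widehat{\mathcal{A}_{\mathcal{W}'}})=\frac{2\,|\sigma(\lambda)|}{\zeta_K(2)}\,\theta(\mathcal{A}_{\mathcal{W}'}),
\]
where the prefactor $\tfrac{2|\sigma(\lambda)|}{\zeta_K(2)}$ is one and the same constant in both identities, since $\lambda$, $\sigma$ and $K$ do not depend on the window. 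Thus it suffices to show $\theta(\mathcal{A}_\mathcal{W})=\theta(\mathcal{A}_{\mathcal{W}'})$ whenever $\vol(\mathcal{W})=\vol(\mathcal{W}')$.

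For this last step I would invoke the identification \eqref{eqnACPS}, namely $\mathcal{A}_\mathcal{W}=\mathcal{P}(\mathcal{W}\inv{A},\mathcal{L})A_1$, in which the lattice $\mathcal{L}\subset\bb{R}^4$ and the matrices $A,A_1$ are fixed and independent of $\mathcal{W}$. By \Cref{propDensity} applied to $\mathcal{P}(\mathcal{W}\inv{A},\mathcal{L})$ and \Cref{lemDensitiesGeneral} to account for the right multiplication by $A_1$,
\[
\theta(\mathcal{A}_\mathcal{W})=\frac{\vol(\mathcal{W}\inv{A})}{\vol(\bb{R}^4/\mathcal{L})\,\det(A_1)}=\frac{|\det(\inv{A})|}{\vol(\bb{R}^4/\mathcal{L})\,\det(A_1)}\,\vol(\mathcal{W})=\frac{\vol(\mathcal{W})}{4},
\]
the last equality being the evaluation already noted in the text. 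In particular $\theta(\mathcal{A}_\mathcal{W})$ is a fixed, window-independent multiple of $\vol(\mathcal{W})$, so $\vol(\mathcal{W})=\vol(\mathcal{W}')$ forces $\theta(\mathcal{A}_\mathcal{W})=\theta(\mathcal{A}_{\mathcal{W}'})$; combining with the two displayed formulas above yields $\theta(\widehat{\mathcal{A}_\mathcal{W}})=\theta(\widehat{\mathcal{A}_{\mathcal{W}'}})$.

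There is essentially no obstacle here; the corollary is a bookkeeping consequence of results already proved. The only points worth a moment's attention are that every $\mathcal{W}\in W_1$ is Jordan measurable (so that the linear change of variables $\vol(\mathcal{W}\inv{A})=|\det(\inv{A})|\vol(\mathcal{W})$ is valid and $\theta(\mathcal{A}_\mathcal{W})$ exists via \Cref{propDensity}), and that the hypotheses of \Cref{propDensity} — regularity of the cut-and-project set, injectivity of $\pi|_\mathcal{L}$, and density of $\pi_{\mathrm{int}}(\mathcal{L})$ — hold uniformly for all such windows, which has already been checked in \Cref{secATsets}.
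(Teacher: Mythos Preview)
Your proposal is correct and follows exactly the same approach as the paper: the paper states the corollary as an immediate consequence of \Cref{thmDensVisA} together with the observation $\theta(\mathcal{A}_\mathcal{W})=\vol(\mathcal{W})/4$, and you have simply written out these two steps explicitly.
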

	
	If $\mathcal{W}\in W_1$ and $x\in\bb{Z}[\zeta]$ are such that $\sigma(x)+\mathcal{W}\in W_1$, then, since
	\[x+\mc{A}_\mc{W}=\mc{A}_{\sigma(x)+\mc{W}}\]
	and $\vol(\mc{W})=\vol(\sigma(x)+\mc{W})$,
	\Cref{corDensVisA1} implies that $\theta(\widehat{x+\mc{A}_\mc{W}})=\theta(\widehat{\mc{A}_\mc{W}})$. Note that $y\in \widehat{x+\mc{A}_\mc{W}}$ if and only if $y$ is the $x$-translate of a point of $\mc{A}_\mc{W}$ \textit{visible from $-x$}. Thus, the density of the points of $\mc{A}_\mc{W}$ visible from $-x$ exists and is equal to $\theta(\widehat{\mc{A}_\mc{W}})$. If $\mc{W}+\e\in W_1$ for all sufficiently small $\epsilon\in\C$, the above holds for all $x\in\Z[\z]$ with $|\sigma(x)|$ sufficiently small. For instance, the octagon $\mc{W}$ defining the Ammann--Beenker point set has this property.
	
	The remainder of this section will be devoted to extending \Cref{thmDensVisA} to a more general result.
	Let $W_1'$ be the family of all Jordan measurable $\mathcal{W}\subset \bb{R}^2$ which are star-shaped with respect to the origin and contain a neighbourhood of the origin. Note that for each $\mathcal{W}\in W_1'$, there is some $r\geq 1$ with $-\mathcal{W}\subset r\mathcal{W}$ and the set of all primes $\pi\in \bb{P}$ with $|\sigma(\pi)|\leq r$ is finite.
	
	The following lemma provides a set of occlusion quotients for $\mathcal{A}_\mathcal{W}$ when $\mc{W}\in W_1'$.
	
	\begin{lem}
		\label{lemCextended}
		Fix $\mc{W}\in W_1'$ and $r\geq 1$ with $-\mc{W}\subset r\mc{W}$. Let $P=\{\pi_1,\ldots,\pi_n\}$ be the set of primes $\pi\in\bb{P}$ with $|\sigma(\pi)|\leq r$. Then, there $k_0,K,m_1,\ldots,m_n\in\Z$, $k_0\leq 0$, so that
		\begin{equation}
		\label{eqnLemCext}C:=(\bb{P}\setminus P)\cup\{\pi_1^{m_1},\ldots,\pi_n^{m_n}\}\cup\left\{\left.\lambda^k\prod_{i=1}^n\pi_i^{k_i}\right| k_0\leq k\leq K, -m_i< k_i< m_i\right\}\end{equation}
		is a set of occlusion quotients for $\mc{A}_\mc{W}$.
	\end{lem}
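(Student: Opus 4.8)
The plan is to choose the integers $m_1,\dots,m_n$ and $k_0\le 0$, $K$ explicitly in terms of $\mathcal W$ and $r$, and then verify the occlusion property (after replacing $C$ by $C\cap\bb R_{>1}$ if need be, which is harmless since every quotient produced below is $>1$) by a case analysis governed by the content ideal $I=(\gcd(x_1,x_2))$ of a given invisible point $x=x_1+x_2\zeta\in\mathcal A_\mathcal W$. The recurring tool will be this elementary remark: as $\mathcal W\in W_1'$ is star-shaped about the origin and satisfies $-\mathcal W\subseteq r\mathcal W$, one has $tw\in\mathcal W$ for all $w\in\mathcal W$ and all $t\in[-1/r,1]$; hence if $c\in\mathcal O_K$ divides both $x_1$ and $x_2$ (equivalently $c\mid x$ in $\bb Z[\zeta]=\mathcal O_K\oplus\mathcal O_K\zeta$) and either $\sigma(c)\ge 1$ or $\sigma(c)\le -r$, then $x/c\in\mathcal A_\mathcal W$. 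I will also use that $\mathcal W$ contains some open ball $B_{\rho_0}(0)$ and lies in some $B_D(0)$, that $\mathcal A_\mathcal W$ is uniformly discrete (by \eqref{eqnACPS}), and that $ey\in\bb Z[\zeta]$ with $y=y_1+y_2\zeta$ primitive and $e\in K$ forces $e\in\mathcal O_K$.

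For the constants I would take $m_i=\max\{2,\lceil\log r/\log|\sigma(\pi_i)|\rceil\}$, so that $m_i\ge 2$ and $|\sigma(\pi_i)|^{m_i}\ge r$; since $|\sigma(\pi_i)|\ge\sqrt 2$ by \Cref{lemSizeOfPrimesA}, this makes $\sigma(\pi_i^{m_i})\ge 1$ or $\sigma(\pi_i^{m_i})\le -r$ in every case. With $R'=\prod_i\pi_i^{m_i-1}$, $R=\prod_i r^{m_i-1}$ and $M=D\lambda R/\rho_0$, the set $S_{\mathrm{fin}}=\{\eta\in\mathcal O_K:0<\eta<R',\,|\sigma(\eta)|\le M\}$ is finite, consisting of the points of the Minkowski-embedded lattice $\mathcal O_K\hookrightarrow\bb R^2$ lying in a bounded rectangle; so I may fix $S_0\in\bb Z_{\ge 0}$ bounding $|s|$ for the finitely many $\eta\in S_{\mathrm{fin}}$ of the form $\lambda^s\prod_i\pi_i^{\varepsilon_i}$ with $\varepsilon_i\in\{0,1\}$. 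Then I set $k_0=-S_0$ and $K=\max\{S_0,1\}$.

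The case analysis for an invisible $x$, with $x=0$ trivial, is as follows. If some prime ideal dividing $I$ has representative $\pi\in\bb P\setminus P$, then $\pi\mid x_1,x_2$ and $|\sigma(\pi)|>r\ge 1$, so $x/\pi\in\mathcal A_\mathcal W$ with $\pi\in C$. Otherwise $I=\prod_i(\pi_i)^{a_i}$; if $a_i\ge m_i$ for some $i$, then $\pi_i^{m_i}\mid x_1,x_2$ and $x/\pi_i^{m_i}\in\mathcal A_\mathcal W$ by the choice of $m_i$, with $\pi_i^{m_i}\in C$. Otherwise $0\le a_i<m_i$ for all $i$; writing $x=\delta x'$ with $\delta=\prod_i\pi_i^{a_i}$ a positive generator of $I$ and $x'$ primitive, one has $1\le|\sigma(\delta)|\le R$, and if $|\sigma(x)|<\rho_0/\lambda$ then $|\sigma(x/\lambda)|=\lambda|\sigma(x)|<\rho_0$, so $x/\lambda\in\mathcal A_\mathcal W$ with $\lambda=\lambda^1\prod_i\pi_i^0\in C$.

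The remaining case — $0\le a_i<m_i$ for all $i$ and $|\sigma(x)|\ge\rho_0/\lambda$ — is the crux, since an arbitrary occluding factor $\alpha>1$ of $x$ cannot be controlled directly (it need not even have non-negative $\pi_i$-exponents). My plan here is to replace $x$ by the point $y=\eta x'$ of $\mathcal A_\mathcal W$ nearest the origin on the ray $\bb R_{>0}x$: it exists by uniform discreteness, is strictly nearer than $x$ (as $x$ is invisible), and is visible, with $\eta\in\mathcal O_K$, $0<\eta<\delta$. The hypothesis $|\sigma(x)|\ge\rho_0/\lambda$ forces $|\sigma(x')|\ge\rho_0/(\lambda R)$, hence $|\sigma(\eta)|\le D/|\sigma(x')|\le M$ from $\sigma(\eta)\sigma(x')=\sigma(y)\in\mathcal W$, so $\eta\in S_{\mathrm{fin}}$. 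Visibility of $y$ forces $(\eta)$ to be a squarefree product of prime ideals among the $(\pi_i)$ — a prime $\pi$ with $\sigma(\pi)\ge1$ or $\sigma(\pi)\le -r$ dividing $\eta$, or $\pi^2\mid\eta$ (using $\sigma(\pi^2)=\sigma(\pi)^2\ge 2$), would produce a point of $\mathcal A_\mathcal W$ nearer than $y$ — so $\eta=\lambda^s\prod_i\pi_i^{\varepsilon_i}$ with $\varepsilon_i\in\{0,1\}$ and $|s|\le S_0$. Then $c:=\delta/\eta=\lambda^{-s}\prod_i\pi_i^{a_i-\varepsilon_i}$ has $\delta/c=\eta\in\mathcal O_K$, $c>1$, $k_0\le -s\le K$ and $|a_i-\varepsilon_i|\le m_i-1<m_i$, so $c\in C$ and $x/c=y\in\mathcal A_\mathcal W$, which closes the argument. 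I expect this last case — specifically, bounding the unit part $s$ of the content of the nearest visible point via the finiteness of $S_{\mathrm{fin}}$, which is exactly where $|\sigma(x)|\ge\rho_0/\lambda$ enters — to be the main obstacle.
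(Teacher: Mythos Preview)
Your proof is correct and follows essentially the same skeleton as the paper's: first dispose of primes in $\bb P\setminus P$, then of high powers $\pi_i^{m_i}$, and in the residual case pass to the visible point $y$ on the ray and show that $c=x/y$ has the required shape. The paper's own argument is considerably sketchier than yours on two points, and your version genuinely improves on it.

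First, the paper merely asserts that suitable $m_i$ exist, whereas you pin them down via $|\sigma(\pi_i)|^{m_i}\ge r$ together with \Cref{lemSizeOfPrimesA}, and you observe (correctly) that $\sigma(\pi_i^{m_i})$ then lies in $[1,\infty)\cup(-\infty,-r]$, so the star-shape remark applies. Second, and more substantially, the paper's last paragraph simply says ``from $c>1$ and $x/\lambda\notin\mathcal A_{\mathcal W}$ it follows that there are integers $k_0,K$ \ldots'', with no indication of how to bound the unit exponent. Your mechanism for this --- splitting off the easy subcase $|\sigma(x)|<\rho_0/\lambda$ (where $c=\lambda$ works directly), and in the hard subcase using $|\sigma(x)|\ge\rho_0/\lambda$ to force $|\sigma(x')|\ge\rho_0/(\lambda R)$ and hence $|\sigma(\eta)|\le M$, so that $\eta$ lands in the finite box $S_{\mathrm{fin}}$ --- is exactly what is needed and is absent from the paper. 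Your squarefreeness argument for $\eta$ via $\sigma(\pi^2)=\sigma(\pi)^2\ge 2$ is also sharper than the paper's ``multiplicity $<m_i$'' (which would suffice, but yours gives the tighter $\varepsilon_i\in\{0,1\}$ and hence $|a_i-\varepsilon_i|\le m_i-1$ immediately, using $m_i\ge 2$).

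Your caveat about intersecting $C$ with $\bb R_{>1}$ is appropriate: the displayed $C$ in the lemma can contain elements $\le 1$ (e.g.\ $\lambda^{k_0}$), so strictly speaking one should either restrict to $C\cap\bb R_{>1}$ or note that every occlusion quotient actually produced is $>1$; the paper is silent on this, and it does not affect the downstream use in \Cref{thmDensVisA2}.
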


	\begin{proof}	
	Suppose $x\in \mathcal{A}_\mathcal{W}\setminus \widehat{\mathcal{A}_\mathcal{W}}$ and $\pi\mid x$ for some $\pi\in \bb{P}\setminus P$. Then $x/\pi\in \mc{A}_\mc{W}$ and $\pi\in C$. Next suppose that $x\in \mathcal{A}_\mathcal{W}\setminus \widehat{\mathcal{A}_\mathcal{W}}$ is divisible by primes in $P$ only. Note that for each $i$ there is an integer $m_i$ so that $x\in \widehat{\mc{A}_\mc{W}}$ if $\pi_i^{m_i}\mid x$. Thus, if $c:=\pi_i^{m}\mid x$ for some $m\geq m_i$, then $x/c\in \mc{A}_\mc{W}$ and $c\in C$. Suppose now, in addition to $x$ being divisible by primes in $P$ only, that the multiplicity of each $\pi_i$ in $x$ is less than $m_i$ and that $x/\lambda\notin \mc{A}_\mc{W}$. Find $c\in \Q(\sqrt{2})_{>1}$ so that $y:=x/c\in \widehat{\mathcal{A}_\mathcal{W}}$. Thus, $y$ is divisible by primes in $P$ only and the multiplicity of $\pi_i$ in $y$ is less than $m_i$.  Write $c=a/b$ for some relatively prime $a,b\in \Z[\sqrt{2}]$. From $c>1$ and $x/\lambda\notin \mc{A}_\mc{W}$ it follows that there are integers $k_0,K$ with $k_0\leq 0$ and $k_0\leq k\leq K$.
	\end{proof}

	We can now prove the following theorem, which gives $\theta(\widehat{\mathcal{A}_\mathcal{W}})$ for $\mc{W}\in W_1'$, and thus generalises \Cref{thmDensVisA}.
	
	\begin{thm}\label{thmDensVisA2}
		Fix $\mathcal{W}\in W_1'$ and $r\geq 1$ with $-\mc{W}\subset r\mc{W}$. Let $P=\{\pi_1,\ldots,\pi_n\}$ be the set of primes $\pi\in\bb{P}$ with $|\sigma(\pi)|\geq r$. Let 
		\[(\bb{P}\setminus P)\cup\{\pi_1^{m_1},\ldots,\pi_n^{m_n}\}\cup\left\{\left.\lambda^k\prod_{i=1}^n\pi_i^{k_i}\right| k_0\leq k\leq K, -m_i< k_i< m_i\right\}=:(\bb{P}\setminus P)\cup M\]
		be a set of occlusion quotients for $\mc{A}_\mc{W}$ as in \eqref{eqnLemCext}. Given a subset $M_0\subset M$, let $\Pi_{M_0}$ denote a least common multiple of its elements. Let $\mathcal{W}_{M_0}=\mathcal{W}\cap\bigcap_{c\in M_0}\sigma(c)\mathcal{W}$. Then
		\begin{equation}
		\label{eqnthmDensVisA2}
		\theta(\widehat{\mathcal{A}_\mathcal{W}})=\left(4\zeta_{\bb{Q}(\sqrt{2})}(2)\prod_{\pi\in P}\left(1-\frac{1}{N(\pi)^2}\right)\right)^{-1}\sum_{M_0\subset M}\frac{(-1)^{\#M_0}\vol(\mathcal{W}_{M_0})}{N(\Pi_{M_0})^2}.
		\end{equation}
	\end{thm}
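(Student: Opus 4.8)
The plan is to rerun the inclusion--exclusion scheme from the proof of \Cref{thmDensVisA}, now with the larger set of occlusion quotients $C=(\bb{P}\setminus P)\cup M$ provided by \Cref{lemCextended}; what makes this tractable is that $M$ is a \emph{finite} set whereas $\bb{P}\setminus P$ is an infinite set of primes, each satisfying $|\sigma(\pi)|>r$. The first (and main) step is to identify $\mathcal{A}_\mathcal{W}\cap\bigcap_{c\in F}c\mathcal{A}_\mathcal{W}$ for a finite $F\subset C$. Writing $F=F_0\sqcup F_1$ with $F_0\subset\bb{P}\setminus P$ and $F_1=F\cap M$, I would first show that the primes in $F_0$ are ``transparent'': if $\pi\in\bb{P}\setminus P$, $x\in\mathcal{A}_\mathcal{W}$ and $\pi\mid x$, then $\sigma(x/\pi)=\sigma(x)/\sigma(\pi)\in\mathcal{W}$, since $|\sigma(\pi)|>r\ge1$, $-\mathcal{W}\subset r\mathcal{W}$ and $\mathcal{W}$ is star-shaped with respect to the origin (splitting into the cases $\sigma(\pi)>0$ and $\sigma(\pi)<0$, exactly as in \Cref{propVisCondA}). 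Each $c\in M$, on the other hand, contributes both a divisibility constraint, by the integral part $a_c\in\mathcal{O}_K$ of $c$, and the extra window cut $\sigma(c)\mathcal{W}$. With $\Pi_F$ an lcm of $F$ in $\mathcal{O}_K$ (so that $N(\Pi_F)^2=N(\Pi_{F_0})^2N(\Pi_{F_1})^2$, where $N(\Pi_{F_0})^2=\prod_{\pi\in F_0}N(\pi)^2$), I expect
\[
\mathcal{A}_\mathcal{W}\cap\bigcap_{c\in F}c\mathcal{A}_\mathcal{W}=\{x\in\Pi_F\bb{Z}[\zeta]\mid\sigma(x)\in\mathcal{W}_{F_1}\},\qquad \mathcal{W}_{F_1}:=\mathcal{W}\cap\bigcap_{c\in F_1}\sigma(c)\mathcal{W}.
\]

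Next I would compute the density of this set. Since $\{x\in\Pi_F\bb{Z}[\zeta]\mid\sigma(x)\in\mathcal{W}_{F_1}\}=\Pi_F\cdot\mathcal{A}_{\mathcal{W}_{F_1}/\sigma(\Pi_F)}$ and multiplication by the nonzero real number $\Pi_F$ is a $\GL{2}{\bb{R}}$-map of determinant $\Pi_F^2$, \Cref{lemDensitiesGeneral} combined with the identity $\theta(\mathcal{A}_{\mathcal{W}'})=\vol(\mathcal{W}')/4$ (which follows from \eqref{eqnACPS} and \Cref{propDensity}; each $\mathcal{W}_{F_1}$ is Jordan measurable and contains a neighbourhood of the origin, since each $\sigma(c)\mathcal{W}$ does) yields
\[
\theta\Big(\mathcal{A}_\mathcal{W}\cap\bigcap_{c\in F}c\mathcal{A}_\mathcal{W}\Big)=\frac{\vol(\mathcal{W}_{F_1})}{4\,N(\Pi_{F_0})^2\,N(\Pi_{F_1})^2}.
\]
Then \Cref{lemInclExcl} expresses $\#(\widehat{\mathcal{A}_\mathcal{W}}\cap TD)/\vol(TD)$ as the alternating sum over finite $F\subset C$ of the analogous finite-$T$ quantities, and I would pass to the limit $T\to\infty$ termwise: setting $C_0:=\max_{M_0\subset M}N(\Pi_{M_0})^2$, any finite $F\subset C$ with $N(\Pi_F)^2\ge\Delta$ has $N(\Pi_{F_0})^2\ge\Delta/C_0$, so bounding each term by $\#(\mathcal{P}(\mathcal{W}A^{-1},\mathcal{L}_{\Pi_F})_*\cap T(DA_1^{-1}))/\vol(TD)$ via $\mathcal{W}_{F_1}\subset\mathcal{W}$ and \Cref{lemEstimate1} dominates the whole tail (uniformly in $T$) by a fixed multiple of $\sum_{F_0:\,N(\Pi_{F_0})^2\ge\Delta/C_0}N(\Pi_{F_0})^{-2}$, which $\to0$ as $\Delta\to\infty$ since $\sum_{F_0}N(\Pi_{F_0})^{-2}=\prod_{\pi\in\bb{P}\setminus P}(1+N(\pi)^{-2})<\infty$; this is essentially the same argument as in \Cref{thmDensVisA}.

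Finally, absolute convergence lets the resulting double sum over $(F_0,F_1)$ factor as
\[
\theta(\widehat{\mathcal{A}_\mathcal{W}})=\frac{1}{4}\Big(\sum_{\substack{F_0\subset\bb{P}\setminus P\\\#F_0<\infty}}\frac{(-1)^{\#F_0}}{N(\Pi_{F_0})^2}\Big)\Big(\sum_{M_0\subset M}\frac{(-1)^{\#M_0}\vol(\mathcal{W}_{M_0})}{N(\Pi_{M_0})^2}\Big),
\]
the second factor being precisely the sum in \eqref{eqnthmDensVisA2}, while the first equals $\prod_{\pi\in\bb{P}\setminus P}(1-N(\pi)^{-2})$, which by the Euler product for $\zeta_{\bb{Q}(\sqrt2)}$ equals $\big(\zeta_{\bb{Q}(\sqrt2)}(2)\prod_{\pi\in P}(1-N(\pi)^{-2})\big)^{-1}$ (using $N(\mathfrak{p})=|N(\pi)|$ for the generator $\pi\in\bb{P}$ of a prime ideal $\mathfrak{p}$); this gives \eqref{eqnthmDensVisA2}. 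The step I expect to be the main obstacle is the first one --- pinning down $\mathcal{A}_\mathcal{W}\cap\bigcap_{c\in F}c\mathcal{A}_\mathcal{W}$ when $F$ contains elements of $M$ of the form $\lambda^k\prod_i\pi_i^{k_i}$ with possibly negative exponents $k_i$, which forces ``lcm'' to be read through integral parts inside $\mathcal{O}_K$ (so that $N(\Pi_{M_0})^2\in\bb{Z}_{>0}$), and verifying that the window of the intersection involves only the $F_1$-cuts, the $F_0$-primes contributing no window constraint.
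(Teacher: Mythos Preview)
Your proposal is correct and follows essentially the same route as the paper: split each finite $F\subset C$ as $F_0\sqcup M_0$ with $F_0\subset\bb{P}\setminus P$ and $M_0=F\cap M$, use $\mc{W}\subset\sigma(\pi)\mc{W}$ for $\pi\in\bb{P}\setminus P$ to reduce the window to $\mc{W}_{M_0}$, take $a_F=\Pi_{M_0}\Pi_{F_0}$ (lcm read through numerators, exactly as you flag), bound the tail via \Cref{lemEstimate1}, and factor the resulting sum into the Euler product over $\bb{P}\setminus P$ times the finite sum over $M_0\subset M$. The paper's proof is organized identically, with the same key observations and the same justification for interchanging limit and sum.
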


	\begin{proof}
	Let $C=(\bb{P}\setminus P)\cup M$. By \Cref{lemInclExcl}, we have
	\begin{align*}
	\frac{\#(\widehat{\mathcal{A}_\mathcal{W}}\cap TD)}{\vol(TD)}&=\sum_{\substack{F\subset C\\ \#F<\infty}}(-1)^{\#F}\frac{\#((\mathcal{A}_\mathcal{W})_*\cap\bigcap_{c\in F}c(\mathcal{A}_\mathcal{W})_*\cap TD)}{\vol(TD)}\\
	&=\sum_{M_0\subset M}\sum_{\substack{F\subset C\\ \#F<\infty\\M\cap F=M_0}}(-1)^{\#F}\frac{\#((\mathcal{A}_\mathcal{W})_*\cap\bigcap_{c\in F}c(\mathcal{A}_\mathcal{W})_*\cap TD)}{\vol(TD)}.
	\end{align*}
	Note that for $M_0\subset M$ and a finite subset $F\subset C$ with $M\cap F=M_0$, we have \[\mathcal{A}_\mathcal{W}\cap\bigcap_{c\in F}c\mathcal{A}_\mathcal{W}=\left\{x\in\bb{Z}[\zeta]a_F\left| \sigma(x)\in \mathcal{W}\cap\bigcap_{c\in F}\sigma(c)\mathcal{W}\right.\right\},\]
	where $a_F$ is a least common multiple of the numerators of the elements of $F$. Note that $F\setminus M_0\subset \bb{P}\setminus P$ in this case. The fact that $\mathcal{W}\subset \sigma(\pi)\mathcal{W}$ for all $\pi\in\bb{P}\setminus P$ implies $\mathcal{W}\cap\bigcap_{c\in F}\sigma(c)\mathcal{W}=\mathcal{W}\cap\bigcap_{c\in M_0}\sigma(c)\mathcal{W}=\mathcal{W}_{M_0}$. 
	
	We can take $a_F=\Pi_{M_0}\Pi_{F\setminus M_0}$, where $\Pi_{M_0}$ is a least common multiple of the numerators of the elements of $M_0$ and $\Pi_{F\setminus M_0}$ is the product of the elements of $F\setminus M_0$. It follows from \eqref{eqnACPS} and \Cref{propDensity} that the density of $\{x\in\bb{Z}[\zeta]y\mid \sigma(x)\in\mathcal{W}\}$ for $y\in\bb{Z}[\sqrt{2}]$ exists and is equal to $\frac{\vol(\mathcal{W})}{4N(y)^2}$. Thus, by similar estimates as in \eqref{eqnEstimates}, we conclude that 
	\[\lim_{T\to\infty}\sum_{\substack{F\subset C\\ \#F<\infty\\M\cap F=M_0}}(-1)^{\#F}\frac{\#((\mathcal{A}_\mathcal{W})_*\cap\bigcap_{c\in F}(\mathcal{A}_\mathcal{W})_*\cap TD)}{\vol(TD)}=\sum_{\substack{F\subset \bb{P}\setminus P\\ \#F<\infty}}\frac{(-1)^{\#M_0+\#F}\vol(\mathcal{W}_{M_0})}{4N(\Pi_{M_0})^2N(\Pi_{F})^2},\]
	where $\Pi_F$ is the product of the elements of $F$. Since \[\sum_{\substack{F\subset \bb{P}\setminus P\\ \#F<\infty}}\frac{(-1)^{\#F}}{N(\Pi_{F})^2}=\prod_{\pi\in\bb{P}\setminus P}\left(1-\frac{1}{N(\pi)^2}\right)=\left(\zeta_{\bb{Q}(\sqrt{2})}(2)\prod_{\pi\in P}\left(1-\frac{1}{N(\pi)^2}\right)\right)^{-1},\]
	the theorem is proved.
	\end{proof}
	
	Let us now apply \Cref{thmDensVisA2} to a fixed $\mathcal{W}'\in W_1'\setminus W_1$. Let $\mathcal{W}$ be the open octagon such that $\mathcal{A}_\mathcal{W}$ is the Ammann--Beenker point set. One can show that $\mathcal{W}+\epsilon\in W_1$ for $\epsilon\in\bb{R}$ precisely when $|\epsilon|<\frac{\sqrt{2}-1}{2}$. Now take $\e=457-323\sqrt{2}>\frac{\sqrt{2}-1}{2}$ and let $\mathcal{W}'=\mathcal{W}+\e$. We then have $\mathcal{W}'\in W_1'\setminus W_1$, but it holds that $-\mathcal{W}'\subset \sigma(\pi)\mathcal{W}'$ for each $\pi\in\bb{P}\smpt{\sqrt{2}}$; hence we have $P=\{\sqrt{2}\}$. Note that $\mathcal{A}_{\mathcal{W}'}=\sigma(\epsilon)+\mathcal{A}_\mathcal{W}$, i.e.\ $\mathcal{A}_{\mathcal{W}'}$ is the translate of the Ammann--Beenker point set by the algebraic integer $457+323\sqrt{2}$. With notation as in \Cref{thmDensVisA2}, one can take $M=\{2,\lambda,\lambda^2,\sqrt{2},\sqrt{2}\lambda,\sqrt{2}\lambda^2,\frac{\lambda}{\sqrt{2}},\frac{\lambda^2}{\sqrt{2}}\}$. Note that by \eqref{eqnthmDensVisA2}, $\vol(\mathcal{W}_{M_0})$ must be calculated for each subset $M_0\subset M$. If $\mathcal{W}$ has a simple form, e.g.\ the shape of a polygon, so that $\mathcal{W}_{M_0}$ is an intersection of half-spaces, then this can be done numerically. In the present case $\mathcal{W}$ is a regular polygon and a numerical calculation of the sum in \eqref{eqnthmDensVisA2} gives $\theta(\widehat{\mc{A}_{\mc{W}'}})=\frac{c}{3\z_{\Q(\sqrt{2})}(2)}$, where $c=3.00057\ldots$ 
	(recall that $\theta(\widehat{\mc{A}_\mc{W}})=\frac{1}{\z_{\Q(\sqrt{2})}(2)}$ by \Cref{thmDensVisA}). By the remark following \Cref{corDensVisA1}, there are infinitely many $x\in\Z[\z]$ such that $\theta(\widehat{x+\mc{A}_\mc{W}})=\theta(\widehat{\mc{A}_\mc{W}})$, but the above example $\mathcal{A}_{\mathcal{W}'}=\sigma(\epsilon)+\mathcal{A}_\mathcal{W}$ shows that this does not hold for all $x\in\Z[\z]$.
	
	We end this discussion with \Cref{table1}, which contains numerical support to the above observation that the density of visible points of $\mc{A}_{\mc{W}'}$ is slightly greater than that of $\mc{A}_\mc{W}$. 
	
	\begin{table}[H]
		\centering	
		\begin{tabular}{lllll}
			\hline
			\rule{0pt}{3ex} $T$& $\widehat{N}_T$ & $\widehat{N}_T'$ & $\widehat{N}_T/\vol(B_T(0))$ \qquad& $\widehat{N}_T'/\vol(B_T(0))$\\
			\hline
			$1000$ \qquad\qquad& $2\,189\,104$ \qquad\qquad& $2\,189\,393$ \qquad\qquad& $0.696813\ldots$& $0.696905\ldots$\\
			$2500$ & $13\,683\,168$ & $13\,684\,733$ & $0.696877\ldots$& $0.696957\ldots$\\
			$3500$ & $26\,818\,928$ & $26\,823\,349$ & $0.696875\ldots$& $0.696990\ldots$\\\hline	
		\end{tabular}
		\caption{Numerical data for $\mc{A}_\mc{W}$ and $\mc{A}_{\mc{W}'}$, where $\mathcal{A}_\mc{W}$ is the Ammann--Beenker point set and $\mc{W}'=\mc{W}+457-323\sqrt{2}$; $\widehat{N}_T=\#(B_T(0)\cap \widehat{\mc{A}_{\mc{W}}})$ and $\widehat{N}_T'=\#(B_T(0)\cap \widehat{\mc{A}_{\mc{W}'}})$.}
		\label{table1}
	\end{table}

	Note that $\theta(\widehat{\mc{A}_\mc{W}})=\frac{1}{\z_{\Q(\sqrt{2})}(2)}=0.696877\ldots$ and $\theta(\widehat{\mc{A}_{\mc{W}'}})=\frac{c}{3\z_{\Q(\sqrt{2})}(2)}=0.697010\ldots$ with $c$ as above. The fourth and fifth columns of \Cref{table1} serve as approximations of $\theta(\widehat{\mc{A}_\mc{W}})$ and $\theta(\widehat{\mc{A}_{\mc{W}'}})$ respectively. We have done analogous computations for other values of $\epsilon\in \Z[\sqrt{2}]$ close to $\frac{\sqrt{2}-1}{2}$ with similar agreements of $\theta(\widehat{\mc{A}_{\mc{W}'}})$ to the corresponding numerical approximations as in \Cref{table1}.

	\subsection{$\theta(\widehat{\mathcal{T}_\mathcal{W}})$ for certain $\mathcal{W}$}
	\label{secdensTsets}
	
	Let $\z=e^{\frac{2\pi i}{5}}$, so that $K=\bb{Q}(\zeta)\cap \bb{R}=\bb{Q}(\tau)$, $\mathcal{O}_K=\bb{Z}[\tau]$ and $\bb{Z}[\zeta]=\mathcal{O}_K\oplus \mathcal{O}_K\zeta$. Let $\sigma$ be the automorphism of $K$ given by $\z\mapsto \z^2$. Note that $\mathcal{O}_K$ is a Euclidean domain with fundamental unit $\tau$. Let $W_2\subset \bb{C}$ denote the family of all Jordan measurable $\mathcal{W}\subset\bb{C}$ which are star-shaped with respect to the origin and satisfy $-\mathcal{W}\subset 2\tau\mathcal{W}$.
	
	The following results have counterparts in \Cref{lemSizeOfPrimesA} and \Cref{propVisCondA} with virtually identical proofs. Recall that $\bb{P}$ is the set of all primes $\pi\in \Z[\t]$ with $1<\pi<\t$.
	
	\begin{lem}
	\label{lemSizeOfPrimesT}
	For every $\pi\in\bb{P}$ we have $|\sigma(\pi)|\geq 2\tau$.
	\end{lem}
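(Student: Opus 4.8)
The plan is to repeat the argument of \Cref{lemSizeOfPrimesA} almost verbatim, changing only the numerical constants. First I would argue by contradiction: suppose there is a prime $\pi\in\bb{P}$ with $|\sigma(\pi)|<2\tau$. Since every element of $\bb{P}$ satisfies $1<\pi<\tau$, the Minkowski embedding places $(\pi,\sigma(\pi))$ in the set $\mathcal{L}_1'\cap\big((1,\tau)\times(-2\tau,2\tau)\big)$, where $\mathcal{L}_1'=\{(x,\sigma(x))\mid x\in\mathcal{O}_K\}$ is a lattice in $\bb{R}^2$ and the box is bounded; hence this intersection is finite. The contradiction will come from checking by hand that it is in fact empty.

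For that finite check I would write a general element of $\mathcal{O}_K=\Z[\tau]$ as $x=a+b\tau$ with $a,b\in\Z$ and pass to the coordinates $x+\sigma(x)=2a+b$ and $x-\sigma(x)=b\sqrt5$ (using $\sigma(\tau)=1-\tau$). If $(x,\sigma(x))$ lies in the box then $|b|\sqrt5=|x-\sigma(x)|<\tau+2\tau=3\tau$, which forces $b\in\{0,1,2\}$; for each of these three values one sees directly that no integer $a$ makes both $a+b\tau\in(1,\tau)$ and $a+b(1-\tau)\in(-2\tau,2\tau)$ hold. The near miss is $b=2$, $a=-2$, i.e.\ $x=2(\tau-1)\in\bb{P}$, whose image $\sigma(x)=-2\tau$ lies exactly on the boundary of the box; this simultaneously shows that the constant $2\tau$ in the statement cannot be improved.

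I do not expect any genuine obstacle here: the only point demanding attention is the bookkeeping in the finite verification — being certain that no lattice point inside the open box is overlooked — and the $x\pm\sigma(x)$ substitution reduces this to inspecting a handful of cases. Structurally the proof is identical to that of \Cref{lemSizeOfPrimesA}, with $\lambda$ replaced by the fundamental unit $\tau$ and the bound $\sqrt2$ replaced by $2\tau$.
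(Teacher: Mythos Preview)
Your proposal is correct and matches the paper's own approach: the paper simply states that \Cref{lemSizeOfPrimesT} has a proof ``virtually identical'' to that of \Cref{lemSizeOfPrimesA}, and you have faithfully carried this out with the appropriate constants for $\mathbb{Z}[\tau]$. Your explicit case analysis via $x\pm\sigma(x)$ is a clean way to do the ``by hand'' verification, and your observation that $x=2(\tau-1)\in\bb{P}$ with $|\sigma(x)|=2\tau$ realises the bound is a nice bonus showing sharpness.
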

	
	\begin{prop}
	\label{propVisCondT}
	For $\mathcal{W}\in W_2$ we have
	\[\widehat{\mathcal{T}_\mathcal{W}}=\{x=x_1+x_2\zeta\in \mathcal{P}\mid \gcd(x_1,x_2)=1,\sigma(x/\tau)\notin \mathcal{W}\}.\]
	\end{prop}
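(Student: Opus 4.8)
The plan is to follow the proof of \Cref{propVisCondA} essentially verbatim, with $\mathbb{Q}(\sqrt{2})$, the fundamental unit $\lambda$, and \Cref{lemSizeOfPrimesA} replaced throughout by $K=\mathbb{Q}(\tau)$, the fundamental unit $\tau$, and \Cref{lemSizeOfPrimesT}, and keeping in mind that $\sigma(\tau)=1-\tau=-\tau^{-1}$, so that $|\sigma(\tau)|=\tau^{-1}<1$.

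For necessity, suppose $x=x_1+x_2\zeta\in\mathcal{T}_\mathcal{W}$ fails one of the two conditions on the right. If $\gcd(x_1,x_2)\neq 1$, then after adjusting by a unit of $\mathbb{Z}[\tau]$ some $\pi\in\mathbb{P}$ divides both $x_1$ and $x_2$, so $x/\pi\in\mathbb{Z}[\zeta]$; since $|\sigma(\pi)|\geq 2\tau$ by \Cref{lemSizeOfPrimesT} and $-\mathcal{W}\subset 2\tau\mathcal{W}$ because $\mathcal{W}\in W_2$, the real scalar $\sigma(\pi)^{-1}$ sends $\sigma(x)$ into $|\sigma(\pi)|^{-1}(\mathcal{W}\cup(-\mathcal{W}))\subset\mathcal{W}$, i.e.\ $x/\pi\in\mathcal{T}_\mathcal{W}$; as $1/\pi\in(0,1)$ this shows $x\notin\widehat{\mathcal{T}_\mathcal{W}}$. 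If instead $\sigma(x/\tau)\in\mathcal{W}$, then $x/\tau\in\mathcal{T}_\mathcal{W}$ and $1/\tau\in(0,1)$, so again $x\notin\widehat{\mathcal{T}_\mathcal{W}}$.

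For sufficiency, let $x=x_1+x_2\zeta\in\mathcal{T}_\mathcal{W}\setminus\widehat{\mathcal{T}_\mathcal{W}}$. Some real $\alpha>1$ satisfies $x/\alpha\in\mathcal{T}_\mathcal{W}$, and since $\mathcal{T}_\mathcal{W}$ is locally finite we may iterate until $x/\alpha\in\widehat{\mathcal{T}_\mathcal{W}}$; then $\alpha\in\mathbb{Q}(\zeta)\cap\mathbb{R}=\mathbb{Q}(\tau)$. Write $x/\alpha=y_1+y_2\zeta$ with $y_1,y_2\in\mathbb{Z}[\tau]$. The necessity direction applied to the visible point $x/\alpha$ gives $\gcd(y_1,y_2)=1$, and a Bézout identity in the principal ideal domain $\mathbb{Z}[\tau]$ then forces $\alpha\in\mathbb{Z}[\tau]$ (via $\alpha=a(\alpha y_1)+b(\alpha y_2)=ax_1+bx_2$ for suitable $a,b\in\mathbb{Z}[\tau]$). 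If $\alpha$ is not a unit, any prime factor of $\alpha$ divides $x_1=\alpha y_1$ and $x_2=\alpha y_2$, so $\gcd(x_1,x_2)\neq 1$. If $\alpha$ is a unit, $\alpha>1$ forces $\alpha=\tau^{k}$ with $k\geq 1$, and it remains to deduce $\sigma(x/\tau)\in\mathcal{W}$ from $\sigma(x/\tau^{k})\in\mathcal{W}$. Using $\sigma(\tau)=-\tau^{-1}$ one computes $\sigma(x/\tau^{k})=(-1)^{k+1}\tau^{k-1}\sigma(x/\tau)$; for odd $k$ this reads $\tau^{k-1}\sigma(x/\tau)\in\mathcal{W}$, whence $\sigma(x/\tau)\in\mathcal{W}$ by star-shapedness, while for even $k$ one has $-\tau^{k-1}\sigma(x/\tau)\in\mathcal{W}$ and concludes by combining $-\mathcal{W}\subset 2\tau\mathcal{W}$ with star-shapedness.

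The prime bookkeeping is routine — it is the same as in \Cref{propVisCondA}. The step I expect to demand the most care is the final unit analysis: because $\sigma(\tau)=-\tau^{-1}$ is negative, iterated division by $\tau$ alternates signs, so for even powers mere star-shapedness of $\mathcal{W}$ is not enough and one must exploit the defining inequality $-\mathcal{W}\subset 2\tau\mathcal{W}$ of $W_2$. Since $2\tau>\tau$, the smallest even power $\alpha=\tau^{2}$ is the borderline case; I would treat it with extra care, using in addition that $x/\alpha$ was chosen to be visible — so that the necessity half of the proposition applies to $x/\alpha$ and constrains $\sigma(x/\tau^{3})$, hence helps locate $\sigma(x/\tau)$ relative to $\mathcal{W}$.
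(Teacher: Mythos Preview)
Your approach is exactly the one the paper points to: it simply asserts that the proof is ``virtually identical'' to that of \Cref{propVisCondA}. You go further than the paper by flagging that the even case $\alpha=\tau^{2}$ is genuinely more delicate here than in the $\mathcal{A}$-setting: there, $-\mathcal{W}\subset\sqrt{2}\,\mathcal{W}$ together with $\lambda>\sqrt{2}$ yields $-\lambda^{-1}\mathcal{W}\subset\mathcal{W}$, whereas here $\tau<2\tau$, so the hypothesis $-\mathcal{W}\subset 2\tau\,\mathcal{W}$ of $W_2$ does \emph{not} give $-\tau^{-1}\mathcal{W}\subset\mathcal{W}$.

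However, your proposed fix --- using visibility of $x/\tau^{2}$ to get $\sigma(x/\tau^{3})\notin\mathcal{W}$ --- does not close the gap, and in fact the gap cannot be closed for the class $W_2$ as defined. Take $\mathcal{W}=(-1,2\tau)\times(-1,1)\subset\mathbb{C}$. This set is convex, contains $0$, and satisfies $-\mathcal{W}=(-2\tau,1)\times(-1,1)\subset(-2\tau,4\tau^{2})\times(-2\tau,2\tau)=2\tau\,\mathcal{W}$, so $\mathcal{W}\in W_2$. For $x=1$ one has $\sigma(x)=1\in\mathcal{W}$, so $x\in\mathcal{T}_{\mathcal{W}}$; the coordinates $x_1=1$, $x_2=0$ are coprime; and $\sigma(x/\tau)=-\tau\notin\mathcal{W}$ since $-\tau<-1$. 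Yet $\sigma(x/\tau^{2})=\tau^{2}\in\mathcal{W}$ (because $\tau^{2}=\tau+1<2\tau$), so $x/\tau^{2}\in\mathcal{T}_{\mathcal{W}}$ occludes $x$. Note also $\sigma(x/\tau^{3})=-\tau^{3}\notin\mathcal{W}$, so the extra constraint you propose to extract from visibility of $x/\tau^{2}$ is already satisfied and cannot rule this case out. Thus both your argument and the paper's ``virtually identical'' proof break down at $k=2$; the statement does go through verbatim under the slightly stronger hypothesis $-\mathcal{W}\subset\tau\,\mathcal{W}$ (which still covers the decagonal windows used downstream), since then $-\tau^{-1}\mathcal{W}\subset\mathcal{W}$ and the even-$k$ step works exactly as in \Cref{propVisCondA}.
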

	
	Let $C=\bb{P}\cup\{\tau\}$ and let $\mathcal{W}\in W_2$. Then, by \Cref{propVisCondT}, $C$ is a set of occlusion quotients for $\mc{T}_\mc{W}$. Proceeding in an analogous manner to the case for $\mathcal{A}$-sets we arrive at the following.

	\begin{thm}
	\label{thmDensVisT}
	For $\mathcal{W}\in W_2$ we have
	\[\theta(\widehat{\mathcal{T}_\mathcal{W}})=\sum_{\substack{F\subset C\\\#F<\infty}}(-1)^{\#F}\theta\Big(\mathcal{T}_\mathcal{W}\cap\bigcap_{c\in F} c\mathcal{T}_\mathcal{W}\Big)=\frac{|\sigma(\tau)|\theta(\mathcal{T}_\mathcal{W})}{\zeta_K(2)}\]
	where $\zeta_K(2)=\frac{2\sqrt{5}\pi^4}{375}$.
	\end{thm}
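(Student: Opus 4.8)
The strategy is to imitate the proof of \Cref{thmDensVisA} line by line, replacing the role of $\Q(\sqrt{2})$ by $K=\Q(\tau)$, the fundamental unit $\lambda=1+\sqrt{2}$ by $\tau$, and the lower bound $\sqrt{2}$ on $|\sigma(\pi)|$ by $2\tau$. First I would fix a Jordan measurable $D\subset\R^2$ with $\vol(D)>0$ and apply \Cref{lemInclExcl} with the set of occlusion quotients $C=\bb{P}\cup\{\tau\}$ to write
\[
\theta(\widehat{\mathcal{T}_\mathcal{W}})=\lim_{T\to\infty}\sum_{\substack{F\subset C\\\#F<\infty}}(-1)^{\#F}\frac{\#\big(\big((\mathcal{T}_\mathcal{W})_*\cap \bigcap_{c\in F}c(\mathcal{T}_\mathcal{W})_*\big)\cap TD\big)}{\vol(TD)}.
\]
As in the $\mathcal{A}$-set case, \Cref{propVisCondT} and \Cref{lemSizeOfPrimesT} give the analogue of \eqref{eqnAW}: for a finite $F\subset C$ with $\tau\notin F$ the intersection $\mathcal{T}_\mathcal{W}\cap\bigcap_{c\in F}c\mathcal{T}_\mathcal{W}$ equals $\{x\in\bb{Z}[\zeta]\Pi_F\mid \sigma(x)\in\mathcal{W}\}$, and when $\tau\in F$ one gets $\{x\in\bb{Z}[\zeta]\Pi_F\mid \sigma(x)\in-\frac{1}{\tau}\mathcal{W}\}$; here $-\frac1\tau\mathcal{W}\subset\mathcal{W}$ because $\mathcal{W}\in W_2$ satisfies $-\mathcal{W}\subset 2\tau\mathcal{W}\subset\tau\mathcal{W}$ (using star-shapedness and $2>1$). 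By \eqref{eqnTCPS}, \Cref{propDensity} and \Cref{lemDensitiesGeneral}, each term converges to $\frac{\theta(\mathcal{T}_\mathcal{W})}{N(\Pi_F)^2}$ or $\frac{\theta(\mathcal{T}_\mathcal{W})}{\tau^2 N(\Pi_F)^2}$ accordingly.

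Next I would justify the termwise passage to the limit exactly as in \eqref{eqnEstimates}: given $\Delta>0$, \Cref{lemSizeOfPrimesT} ensures only finitely many $F\subset C$ have $N(\Pi_F)^2<\Delta$ (since every prime factor contributes a norm $\geq$ something bounded below, using $|\sigma(\pi)|\geq 2\tau$ and $|\pi|>1$), and the tail $\sum_{N(\Pi_F)^2\geq\Delta} N(\Pi_F)^{-2}$ is bounded by the corresponding tail of $\zeta_K(2)^{-1}$-type sums, which tends to $0$; here \Cref{lemEstimate1}, applied with $\mathcal{W}$ star-shaped, supplies the uniform constant $L$ bounding $\#(\mathcal{P}(\mathcal{W},\mathcal{L}_y)_*\cap TD)$ by $LT^2/N(y)^2$. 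Summing the series termwise and using the Euler product and the Möbius identity \eqref{eqnDedekind} for $\zeta_K$, the sum collapses to
\[
\Big(1-\frac{1}{\tau^2}\Big)\frac{\theta(\mathcal{T}_\mathcal{W})}{\zeta_K(2)}.
\]
Finally, since $\sigma(\tau)=\frac{1-\sqrt5}{2}$ has $|\sigma(\tau)|=\tau^{-1}$ and $\tau$ is a unit with norm $N(\tau)=\tau\sigma(\tau)=-1$, one computes $1-\tau^{-2}=\tau^{-1}(\tau-\tau^{-1})=\tau^{-1}\sqrt5=|\sigma(\tau)|\sqrt5$; more directly $1-\tau^{-2}=|\sigma(\tau)|\cdot\tau^{-1}\cdot(\tau^2-1)/1$, and I would just verify the clean identity $1-\tau^{-2}=|\sigma(\tau)|$ up to the bookkeeping needed to match the stated form $\frac{|\sigma(\tau)|\theta(\mathcal{T}_\mathcal{W})}{\zeta_K(2)}$. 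The value $\zeta_K(2)=\frac{2\sqrt5\,\pi^4}{375}$ for $K=\Q(\sqrt5)$ is classical and can be quoted from \cite[Chapter 4]{washington1997introduction} via the class number formula / $L$-function evaluation, exactly as $\zeta_{\Q(\sqrt2)}(2)=\frac{\pi^4}{48\sqrt2}$ was quoted earlier.

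The only genuinely delicate point, and the main obstacle, is the same as in the $\mathcal{A}$-set argument: establishing the termwise convergence of the inclusion–exclusion series, i.e.\ controlling the tail uniformly in $T$. This rests entirely on the lower bound $|\sigma(\pi)|\geq 2\tau$ from \Cref{lemSizeOfPrimesT} (which forces $N(\Pi_F)$ to grow and makes $\sum N(\Pi_F)^{-2}$ summable) together with the uniform counting bound of \Cref{lemEstimate1}; once these are in hand the rest is the formal manipulation of the Dedekind zeta Euler product. Everything else is a routine transcription of \S\ref{secdensAsets} with the dictionary $(\sqrt2,\lambda)\rightsquigarrow(\tau,\tau)$ and $W_1\rightsquigarrow W_2$.
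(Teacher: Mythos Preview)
Your proposal is correct and follows exactly the approach the paper itself indicates (the paper gives no separate proof, saying only that one proceeds ``in an analogous manner to the case for $\mathcal{A}$-sets''). One slip worth fixing: your chain $-\mathcal{W}\subset 2\tau\mathcal{W}\subset\tau\mathcal{W}$ has the second inclusion backwards---star-shapedness gives $\tau\mathcal{W}\subset 2\tau\mathcal{W}$, not the reverse---so your justification of $-\tfrac{1}{\tau}\mathcal{W}\subset\mathcal{W}$ is faulty as written; for the tail estimate this is harmless since \Cref{lemEstimate1} applies with any fixed star-shaped bounding window. The identity you hesitated over is clean: from $\tau^2=\tau+1$ one gets $1-\tau^{-2}=(\tau^2-1)/\tau^2=\tau/\tau^2=\tau^{-1}=|\sigma(\tau)|$, matching the stated form directly.
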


	Note that $\e+\mc{W}\in W_2$ for all regular decagons centered at the origin and $\e\in \C$ sufficiently small. Thus in particular, vertex sets $\mc{T}_\mc{W}$ from triangular tilings as in \cite{baake1990planar} are covered by \Cref{thmDensVisT}.

	We remark that it ought to be possible to prove an extension of \Cref{thmDensVisT} analogous to \Cref{thmDensVisA2}; however, we have not carried this out.
	
	\subsection{$\theta(\widehat{\mathcal{P}_\e})$ for $|\epsilon|<0.1$}
	\label{secdensPsets}
	
	Let $\z,K,\mc{O}_K,\t,\s$ be as in \Cref{secdensTsets}. Recall the definitions of $\mathcal{W}_{k,\epsilon}$, $\kappa$ and $\mathcal{P}_\epsilon$ from \Cref{secPsets}. Note that $\t=1+\z+\z^4$, hence $\kappa(\t)=3$. In \Cref{thmDenVisPenrose} below we give a formula for $\theta(\widehat{\mathcal{P}_\e})$ when $|\epsilon|<0.1$. 
	
	First, we verify that \Cref{thmdeBruijn} holds for each $\epsilon=\sum_{j=0}^4\gamma_j\zeta^{2j}$, where $\gamma\in(\bb{Q}\setminus\bb{Z})^5$ satisfies $\sum_{j=0}^4\gamma_j=0$. This result then allows us to explicitly provide $\epsilon\in\C$, $|\e|<0.1$, with the property that $\mc{P}_\e$ is the vertex set of a rhombic Penrose tiling.
	
	\begin{lem}
	\label{lemExOfRegularPenta}
	If $\gamma\in(\bb{Q}\setminus\bb{Z})^5$ satisfies $\sum_{j=0}^4\gamma_j=0$, then \[\e:=\sum_{j=0}^4\gamma_j\zeta^{2j}\notin\bigcup_{k=0}^5( \bb{R}\zeta^ki+(1-\zeta)).\]
	\end{lem}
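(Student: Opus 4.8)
The plan is to rotate each of the lines $\R\z^{k}i$ onto the real axis and then project onto $\R$. Throughout, $(1-\z)$ denotes the ideal $(1-\z)\Z[\z]=\ker\kappa$, and I will use $K=\Q(\z)\cap\R=\Q(\sqrt 5)$, $\mc{O}_K=\Z[\t]$. First I would note that $\e\in\R\z^{k}i+(1-\z)$ means $\e=r\z^{k}i+\beta$ with $r\in\R$, $\beta\in(1-\z)\Z[\z]$, and that multiplying by the unit $\z^{-k}$ of $\Z[\z]$ turns this into $\z^{-k}\e\in\R i+(1-\z)\Z[\z]$; since $j\mapsto 2j$ permutes $\Z/5\Z$, the identity $\e=\sum_{j=0}^{4}\gamma_j\z^{2j}$ may be rewritten $\e=\sum_{m=0}^{4}c_m\z^{m}$ where $(c_0,\dots,c_4)$ is a permutation of $(\gamma_0,\dots,\gamma_4)$, and then $\z^{-k}\e=\sum_{m=0}^{4}c_{m+k}\z^{m}$ (indices mod $5$) has the same form with the coefficients cyclically shifted, still rational, still summing to $0$, and with $\z^{0}$-coefficient $c_k=\gamma_{3k\bmod 5}$, which lies outside $\Z$ by hypothesis. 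So the whole lemma reduces to the following: \emph{if $\e=\sum_{m=0}^{4}c_m\z^{m}$ with $c_m\in\Q$, $\sum_m c_m=0$ and $c_0\notin\Z$, then $\e\notin\R i+(1-\z)\Z[\z]$}; I would prove this by assuming $\e=ri+\beta$ with $\beta\in(1-\z)\Z[\z]$ and deriving a contradiction from $\Re\e=\Re\beta$.

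The key ingredient is to pin down the subset $\Re\big((1-\z)\Z[\z]\big)=\{\Re\beta:\beta\in(1-\z)\Z[\z]\}$ of $\R$ exactly. Since $\Q(\z)/K$ is Galois of degree $2$ with nontrivial automorphism equal to complex conjugation $\z\mapsto\z^{4}$, one has $2\Re x=\mathrm{Tr}_{\Q(\z)/K}(x)$ for all $x\in\Q(\z)$, so I need the ideal $\mathrm{Tr}_{\Q(\z)/K}\big((1-\z)\Z[\z]\big)$ of $\Z[\t]$. Using $\t=-\z^{2}-\z^{3}\in\Z[\z]$ one checks directly that $\Z[\t]+\Z[\t]\z=\Z[\z]$, so $\{1,\z\}$ is a $\Z[\t]$-basis of $\Z[\z]$ and hence $\{1-\z,\ \z-\z^{2}\}$ is a $\Z[\t]$-basis of $(1-\z)\Z[\z]$; by $\Z[\t]$-linearity of the trace the ideal in question is $(3-\t)\Z[\t]+\sqrt 5\,\Z[\t]$, since $\mathrm{Tr}(1-\z)=2-(\z+\z^{4})=2-(\t-1)=3-\t$ and $\mathrm{Tr}(\z-\z^{2})=(\z+\z^{4})-(\z^{2}+\z^{3})=(\t-1)+\t=2\t-1=\sqrt 5$. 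As $3-\t=\t^{-1}\sqrt 5$ with $\t^{-1}$ a unit of $\Z[\t]$, this ideal is just $\sqrt 5\,\Z[\t]$, giving $\Re\big((1-\z)\Z[\z]\big)=\tfrac{\sqrt 5}{2}\Z[\t]$.

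To finish I would compute $\Re\e$ from $\z+\z^{4}=\t-1$ and $\z^{2}+\z^{3}=-\t$, obtaining $\Re\e=\big(c_0-\tfrac{c_1+c_4}{2}\big)+\tfrac{c_1+c_4-c_2-c_3}{2}\,\t$, and compare with $\tfrac{\sqrt 5}{2}\Z[\t]=(\t-\tfrac12)\Z[\t]$, where $(\t-\tfrac12)(a+b\t)=(b-\tfrac a2)+(a+\tfrac b2)\t$ by $\t^{2}=\t+1$. Since $\{1,\t\}$ is a $\Q$-basis of $K$, $\Re\e$ lies in $\tfrac{\sqrt 5}{2}\Z[\t]$ exactly when the unique $a,b\in\Q$ solving $b-\tfrac a2=c_0-\tfrac{c_1+c_4}{2}$, $a+\tfrac b2=\tfrac{c_1+c_4-c_2-c_3}{2}$ are integers; solving (using $\sum_m c_m=0$) gives $a=c_1+c_4$ and $b=c_0$, so membership would force $c_0\in\Z$, a contradiction. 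This proves the reduced statement, and with it the lemma (the case $k=5$ coincides with $k=0$, as $\R\z^{5}i=\R i$).

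I expect the main obstacle to be the second step — the exact determination $\Re\big((1-\z)\Z[\z]\big)=\tfrac{\sqrt 5}{2}\Z[\t]$, which is essentially a trace/different computation for the ramified quadratic extension $\Q(\z)/\Q(\sqrt 5)$. One cannot replace it by a soft ``$\e$ is too irrational to lie on such a line'' argument: the ideal $(1-\z)$ is dense in $\C$, so the forbidden sets $\R\z^{k}i+(1-\z)$ are dense, and $\e$ may even lie in $\Z[\z]$ (for instance $\gamma=(\tfrac15,\tfrac15,\tfrac15,\tfrac15,-\tfrac45)$ yields $\e=-\z^{3}$). Once the trace computation and the final coordinate matching are in hand, each $k$ is eliminated by exactly one of the hypotheses $\gamma_j\notin\Z$, which is precisely where the assumption $\gamma\in(\Q\setminus\Z)^{5}$ — rather than merely $\sum_j\gamma_j=0$ — is used.
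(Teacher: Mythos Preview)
Your proof is correct. The core strategy is the same as the paper's: rotate by a power of $\zeta$ to reduce to the imaginary axis, then use the real part to force a coefficient into $\Z$. The paper carries this out by subtracting the element $\alpha\in(1-\zeta)$ first to obtain a purely imaginary $z=\sum_j\gamma_j'\zeta^j$ with $\gamma_j'\in\Q\setminus\Z$ and $\sum_j\gamma_j'\in 5\Z$, then expands $z+\bar z=0$ directly to get $\gamma_1'+\gamma_4'=\gamma_2'+\gamma_3'=2\gamma_0'$, whence $5\gamma_0'=\sum_j\gamma_j'\in 5\Z$, contradiction. You instead leave $\beta$ unspecified and compute the trace ideal $\Re\big((1-\zeta)\Z[\zeta]\big)=\tfrac{\sqrt5}{2}\Z[\tau]$ once and for all, then match coordinates of $\Re\e$ against it. The two computations are equivalent: your conditions $a=c_1+c_4\in\Z$, $b=c_0\in\Z$ are exactly the paper's two linear relations together with the sum condition, repackaged. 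Your route is a little more structural (it isolates the relevant algebraic fact about the ramified prime $(1-\zeta)$ over $\Z[\tau]$), while the paper's is marginally shorter since it avoids naming the trace ideal.
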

	
	\begin{proof}
		Suppose, towards a contradiction, that  $\sum_{j=0}^4\gamma_j\zeta^{2j}=u\zeta^ki+\alpha$, for some $k\in\{0,\ldots,4\}$, $u\in\R$ and $\alpha\in (1-\z)$. Let $z=\z^{5-k}(-\alpha+\sum_{j=0}^4\gamma_j\zeta^{2j})\in \R i$. It follows that $z=\sum_{j=0}^4\gamma_j'\zeta^{j}\in\bb{R}i$ for some $\gamma'\in(\bb{Q}\setminus\bb{Z})^5$ with $\sum_{j=0}^4\gamma_j'\in 5\Z$. Using $z=-\overline{z}$, we find that
		\[0=2\g_0'+(\g_1'+\g_4')(\zeta+\zeta^4)+(\g_2'+\g_3')(\zeta^2+\zeta^3)=2\g_0'+(\g_1'+\g_4')(\tau-1)+(\g_2'+\g_3')(-\tau).\]
		Since $\t\in \R\setminus \Q$ we must have $\g_1'+\g_4'-\g_2'-\g_3'=0$ and also $2\g_0'-\g_1'-\g_4'=0$. Hence, $\gamma_1'+\gamma_4=\gamma_2'+\gamma_3'=2\gamma_0'$ and therefore $5\g_0=\sum_{j=0}^4\gamma_j'\in 5\Z$, which implies $\gamma_0'\in \Z$, contradiction.
	\end{proof}
	
	Henceforth we write $\pm D\subset E$ when $D\cup(-D)\subset E$. For all $\e\in \C$, with $|\epsilon|$ sufficiently small, we have for all $k_1,k_2\in\{1,2,3,4\}$ that $\mathcal{W}_{k_1,\epsilon}$ is star-shaped with respect to the origin and
	$\pm\frac{1}{2\tau}\mathcal{W}_{k_1,\epsilon}\subset \mathcal{W}_{k_2,\epsilon}$. This can be verified to hold when $|\epsilon|<0.1$.
	
	\begin{prop}
		\label{propVisCondP}
		For $\epsilon\in\C$ with $|\e|<0.1$ we have
		\[\widehat{\mathcal{P}_\epsilon}=\{x=x_1+x_2\zeta\in\mathcal{P}_\e\mid x_1,x_2\in\bb{Z}[\tau]: \gcd(x_1,x_2)=1, x/\tau\notin \mathcal{P}_\epsilon,x/\tau^2\notin \mathcal{P}_\epsilon\}.\]
	\end{prop}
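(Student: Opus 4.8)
The plan is to prove the two inclusions separately, as in the proofs of \Cref{propVisCondA} and \Cref{propVisCondT}, while keeping track of the two new features: the congruence constraint recorded by $\kappa$, and the fact that \emph{both} $\tau$ and $\tau^2$ occur as occlusion quotients.

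\textbf{Necessity of the conditions.} For $\widehat{\mathcal{P}_\epsilon}\subseteq(\text{right-hand side})$ I argue the contrapositive. If $x/\tau\in\mathcal{P}_\epsilon$ or $x/\tau^2\in\mathcal{P}_\epsilon$ then $x$ is blocked, since $\tau,\tau^2>1$. So assume $\gcd(x_1,x_2)\neq 1$. First, $\sqrt 5=2\tau-1$ cannot divide both $x_1$ and $x_2$: the kernel of $\kappa|_{\mathbb{Z}[\tau]}$ is the ramified prime $(\sqrt 5)$ (since $\kappa(\tau)=3$ determines a ring homomorphism $\mathbb{Z}[\tau]\to\mathbb{Z}/5\mathbb{Z}$), so $\sqrt 5\mid x_1,x_2$ would force $\kappa(x)=0$, contradicting $x\in\mathcal{P}_\epsilon$. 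Hence any prime $\pi$ dividing both $x_1$ and $x_2$ is prime to $5$, so $\kappa(\pi)$ is a unit mod $5$; after scaling $\pi$ by a unit of $\mathbb{Z}[\tau]$ we may take $\pi\in\mathbb{P}$. Then $x/\pi\in\mathbb{Z}[\zeta]$, $\kappa(x/\pi)=\kappa(x)\kappa(\pi)^{-1}\in\{1,2,3,4\}$, and $|\sigma(\pi)|\geq 2\tau$ by \Cref{lemSizeOfPrimesT}. Since $\mathcal{W}_{\kappa(x),\epsilon}$ is star-shaped about the origin, $\sigma(x/\pi)=\sigma(x)/\sigma(\pi)$ lies in $\tfrac{1}{2\tau}\mathcal{W}_{\kappa(x),\epsilon}$ or in $-\tfrac{1}{2\tau}\mathcal{W}_{\kappa(x),\epsilon}$, hence in $\mathcal{W}_{\kappa(x/\pi),\epsilon}$ by the window property valid for $|\epsilon|<0.1$. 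Thus $x/\pi\in\Lambda_{\kappa(x/\pi),\epsilon}\subseteq\mathcal{P}_\epsilon$, so $x$ is invisible.

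\textbf{Sufficiency.} Conversely, suppose $x\in\mathcal{P}_\epsilon$ satisfies the three conditions but $x\notin\widehat{\mathcal{P}_\epsilon}$. Picking the point of $\mathcal{P}_\epsilon$ on the segment $[0,x]$ closest to the origin yields $\alpha>1$ with $y:=x/\alpha\in\widehat{\mathcal{P}_\epsilon}$; since $x,y\in\mathbb{Z}[\zeta]$ and $\alpha\in\mathbb{R}$, we get $\alpha\in\mathbb{Q}(\zeta)\cap\mathbb{R}=\mathbb{Q}(\tau)$. Applying the necessity direction to $y$ gives $\gcd(y_1,y_2)=1$; writing $\alpha=a/b$ in lowest terms in the principal ideal domain $\mathbb{Z}[\tau]$, the relations $x_i=\alpha y_i\in\mathbb{Z}[\tau]$ force $b\mid y_i$ for $i=1,2$, so $b$ is a unit and $\alpha\in\mathbb{Z}[\tau]$. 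If $\alpha$ is not a unit, a prime factor of $\alpha$ divides $x_1=\alpha y_1$ and $x_2=\alpha y_2$, contradicting $\gcd(x_1,x_2)=1$. Hence $\alpha=\tau^k$ with $k\geq 1$. For $k=1$ we get $x/\tau=y\in\mathcal{P}_\epsilon$ and for $k=2$ we get $x/\tau^2=y\in\mathcal{P}_\epsilon$, each contradicting a hypothesis. For $k\geq 3$ I would show $x/\tau=\tau^{k-1}y\in\mathcal{P}_\epsilon$: here $\kappa(x/\tau)\equiv 3^{k-1}\kappa(y)\pmod 5$ lies in $\{1,2,3,4\}$, and $\sigma(x/\tau)=\sigma(\tau)^{k-1}\sigma(y)\in\sigma(\tau)^{k-1}\mathcal{W}_{\kappa(y),\epsilon}$. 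Using the explicit windows $\mathcal{W}_{\ell,\epsilon}=c_\ell\mathcal{W}_1+\epsilon$ with $c_\ell\in\{1,-\tau,\tau,-1\}$, the identity $c_{\ell'}=-c_\ell$ when $\ell'\equiv-\ell\pmod 5$, and $\sigma(\tau)=-\tau^{-1}$, one verifies that $\sigma(\tau)^j\mathcal{W}_{\ell,\epsilon}\subseteq\mathcal{W}_{\ell',\epsilon}$ for $\ell'\equiv 3^j\ell\pmod 5$ whenever $j\geq 2$ and $|\epsilon|<0.1$: the set $\sigma(\tau)^j\mathcal{W}_{\ell,\epsilon}$ sits in a disc of radius at most $\tau^{-2}|c_{\ell'}|$ about a point at distance $O(|\epsilon|)$ from $\epsilon$, whereas $\mathcal{W}_{\ell',\epsilon}$ contains a disc of radius $|c_{\ell'}|\cos(\pi/5)$ about $\epsilon$, and $\tau^{-2}+O(|\epsilon|)<\cos(\pi/5)$. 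Taking $j=k-1\geq 2$ gives $x/\tau\in\mathcal{P}_\epsilon$, the final contradiction, so $x\in\widehat{\mathcal{P}_\epsilon}$.

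\textbf{Where the difficulty lies.} The delicate step is the case $k\geq 3$ of the sufficiency argument. For $\mathcal{A}$- and $\mathcal{T}$-sets a single condition $-\mathcal{W}\subseteq r\mathcal{W}$ on the window absorbed the sign ambiguity coming from $\sigma(\lambda),\sigma(\tau)<0$, and one power of the fundamental unit sufficed; here the four windows form a family that is cyclically permuted by multiplication by $\tau$ (because $\kappa(\tau)=3$ has order $4$ in $(\mathbb{Z}/5\mathbb{Z})^\times$), so the argument must simultaneously control a contraction, a translation by $\epsilon$, and a shift of index, and it is precisely the hypothesis $|\epsilon|<0.1$ that makes the nested-window inclusions above hold (this is also why $x/\tau^2\notin\mathcal{P}_\epsilon$ is a genuinely necessary condition, as it is not implied by $x/\tau\notin\mathcal{P}_\epsilon$ once $\epsilon\neq 0$). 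A secondary point requiring care is the behaviour of the ramified prime $\sqrt 5$, whose representative $3-\tau\in\mathbb{P}$ satisfies $\kappa(3-\tau)=0$, in the divisibility step of the necessity proof.
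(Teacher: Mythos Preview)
Your proof is correct and follows the same strategy as the paper's: necessity via the ramified-prime exclusion together with the window inclusions $\pm\tfrac{1}{2\tau}\mathcal{W}_{k_1,\epsilon}\subset\mathcal{W}_{k_2,\epsilon}$, and sufficiency by forcing $\alpha\in\mathbb{Z}[\tau]$ to be a power $\tau^k$ and then showing $x/\tau\in\mathcal{P}_\epsilon$ whenever $k\geq 3$. The only real difference is in that last step: the paper splits off $k=3$ and checks each value of $\kappa(x)$ by hand (using inclusions such as $\tau^{-2}\mathcal{W}_{3,\epsilon}\subset\mathcal{W}_{2,\epsilon}$), treating $k\geq 4$ via the chain $\tfrac{1}{\tau^{k-1}}\mathcal{W}_{k_1,\epsilon}\subset\tfrac{1}{2\tau}\mathcal{W}_{k_1,\epsilon}\subset\mathcal{W}_{k_2,\epsilon}$, whereas you handle all $k\geq 3$ at once through the disc comparison $\tau^{-2}+O(|\epsilon|)<\cos(\pi/5)$. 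Your uniform argument is a little cleaner; the one point that deserves a line of justification is the bound $\tau^{-j}|c_\ell|\leq\tau^{-2}|c_{\ell'}|$ underlying your claimed radius, which holds with equality at $j=2$ (since $\ell'\equiv-\ell$ gives $|c_{\ell'}|=|c_\ell|$) and for $j\geq 3$ follows from $|c_\ell|\leq\tau$, $|c_{\ell'}|\geq 1$.
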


	\begin{proof}
	First necessity of the visibility conditions are proved. Take $x=x_1+x_2\zeta\in \mathcal{P}_\e$. If $x/\tau\in \mathcal{P}_\epsilon$ or $x/\tau^2\in \mathcal{P}_\epsilon$, then $x$ is invisible. If $x_1,x_2$ are not relatively prime, then there is some prime $\pi \in \bb{P}$ such that $\pi \mid x_1,x_2$. We must have $\kappa(\pi)\neq 0$, hence $1-\zeta\nmid \pi$ in $\Z[\z]$. The only prime in $\bb{P}$ divisible by $1-\zeta$ is $3-\t$, which is the prime in $\bb{P}$ dividing $5$. Thus, $\pi\in\bb{P}\smpt{3-\t}$. By \Cref{lemSizeOfPrimesT}, and the fact that $\pm\frac{1}{2\tau}\mathcal{W}_{k_1,\epsilon}\subset \mathcal{W}_{k_2,\epsilon}$ for all $k_1,k_2$, we conclude that $x/\pi\in \mathcal{P}_\e$, and thus $x$ is invisible.
	
	To prove sufficiency, take $x\in \mathcal{P}_{\epsilon}\setminus \widehat{\mathcal{P}_{\epsilon}}$. Then, there is some $\alpha\in \bb{R}_{>1}$ such that $x/\alpha\in \mathcal{P}_{\epsilon}\subset \bb{Z}[\zeta]$. Since $\mathcal{P}_{\epsilon}$ is locally finite, we may assume that $y:=x/\alpha\in \widehat{\mathcal{P}_{\epsilon}}$. By the necessary conditions proved above, if we write $y=y_1+y_2\zeta$ with $y_1,y_2\in\Z[\t]$, then $y_1,y_2$ must be relatively prime. Hence $\alpha\in \bb{Q}(\zeta)\cap \bb{R}=\bb{Q}(\tau)$. Write $\alpha=a_1/a_2$ for some relatively prime $a_i\in\bb{Z}[\tau]$. Since $y_1,y_2$ are relatively prime, $a_2$ has to be a unit, i.e.\ $\alpha\in \bb{Z}[\tau]$.
	
	If $|\sigma(\alpha)|>1$, then $x_1,x_2$ are not relatively prime. Otherwise, $\alpha=\tau^k$ for some $k>1$. If $k\geq 4$ then $\sigma(x/\tau^k)=(-1)^k\tau^k\sigma(x)\in \mathcal{W}_{k_1,\epsilon}$ for $k_1=\kappa(x/\tau^k)$. Also, we have \[\sigma(x/\tau)=-\tau\sigma(x)\in \frac{(-1)^{k+1}}{\tau^{k-1}}\mathcal{W}_{k_1,\epsilon}\subset \frac{(-1)^{k+1}}{\tau^{3}}\mathcal{W}_{k_1,\epsilon}\subset \frac{(-1)^{k+1}}{2\tau}\mathcal{W}_{k_1,\epsilon}\subset \mathcal{W}_{k_2,\epsilon}\]
	for all $k_2$, hence $x/\tau\in \mathcal{P}_\epsilon$.
	
	Suppose now $k=3$. For each of the four possible values of $\kappa(x)$ we verify that $x/\tau\in \mathcal{P}_\epsilon$. The case $\kappa(x)=1$ is showed, the other cases can be treated similarly. In this case, $\kappa(x/\tau)=2$ and $\kappa(x/\tau^3)=3$ so $\sigma(x/\tau^3)=-\tau^3\sigma(x)\in \mathcal{W}_{3,\epsilon}$. Hence, $-\tau\sigma(x)=\sigma(x/\tau)\in \frac{\mathcal{W}_{3,\epsilon}}{\tau^2}\subset\mathcal{W}_{2,\epsilon}$, that is $x/\tau\in \mathcal{P}_\epsilon$. The inclusion $\frac{\mathcal{W}_{3,\epsilon}}{\tau^2}\subset\mathcal{W}_{2,\epsilon}$ is guaranteed by $|\e|<0.1$.
	\end{proof}

	As a by-product of the proof of \Cref{propVisCondP}, we find that \[C:=(\bb{P}\smpt{3-\t})\cup\{\t,\t^2\}\]
	is a set of occlusion quotients for $\mc{P}_\e$ if $|\e|<0.1$.
	
	\begin{lem}
		\label{lemDensityP1}
		For each $k\in\{1,\ldots,4\}$, $y\in \Z[\t]\setminus (3-\t)$ and Jordan measurable $\mathcal{W}\subset \bb{R}^2$ we have, with $\mc{P}=\{x\in\bb{Z}[\zeta]: \kappa(x)=k, y\mid x,\sigma(x)\in\mathcal{W}\}$, that
		$\theta(\mc{P})=\frac{4\vol(\mathcal{W})}{25(2\tau-1)N(y)^2}$.
		
		In particular, 
		\[\theta(\mc{P}_\e)=\frac{8(1+\t^2)\vol(\mc{W}_1)}{25(2\t-1)}\]
		for any $\e\in\C$, where $\mc{W}_1$ is the open regular pentagon with vertices $1,\z,\z^2,\z^3,\z^4$.
	\end{lem}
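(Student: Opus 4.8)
The plan is to realise $\mc{P}$ as a rescaled and translated copy of a $\mc{T}$-set and then read off its density from \eqref{eqnTCPS}, \Cref{propDensity} and \Cref{lemDensitiesGeneral}. The hypothesis $y\in\Z[\t]\setminus(3-\t)$ says precisely that $y$ is coprime to $1-\z$ in $\Z[\z]$: indeed $(3-\t)$ is the prime of $\Z[\t]$ lying below the (ramified) prime $(1-\z)$ of $\Z[\z]$, so for $y\in\Z[\t]$ one has $y\in(3-\t)\Z[\t]$ iff $(1-\z)\mid y$ in $\Z[\z]$. Since $\ker\kappa=(1-\z)$ and $\Z[\z]$ is a principal ideal domain, it follows that $\{x\in\Z[\z]\mid\kappa(x)=k,\ y\mid x\}=x_0+y(1-\z)\Z[\z]$ for any $x_0\in y\Z[\z]$ with $\kappa(x_0)=k$, such $x_0$ existing because $\kappa$ is already onto $\Z/5\Z$ on the ideal $y\Z[\z]$. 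Writing $x=x_0+y(1-\z)z$ with $z\in\Z[\z]$, the condition $\s(x)\in\mc{W}$ becomes $\s(z)\in\mc{V}:=\s(y(1-\z))^{-1}\big(\mc{W}-\s(x_0)\big)$, so that $\mc{P}=x_0+y(1-\z)\,\mc{T}_{\mc{V}}$.

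Next I would record the base identity $\theta(\mc{T}_{\mc{V}})=\frac{4\vol(\mc{V})}{5(2\t-1)}$, valid for any Jordan measurable $\mc{V}\subset\R^2$. By \eqref{eqnTCPS}, \Cref{propDensity} and \Cref{lemDensitiesGeneral} one has $\theta(\mc{T}_{\mc{V}})=\vol(\mc{V})\big(|\det A|\,|\det A_1|\,\vol(\R^4/\mc{L})\big)^{-1}$, and a short computation gives $|\det A|\,|\det A_1|=\frac{\t+2}{4\t}$, $\vol(\R^4/\mc{L})=(2\t-1)^2=5$, together with $\frac{\t}{\t+2}=\frac1{2\t-1}$; the sub-case where $\mc{V}$ has empty interior is trivial since a Jordan measurable set is bounded and hence then $\vol(\mc{V})=0$. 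Using that multiplication by a nonzero $c\in\C$ rescales planar densities by $|c|^{-2}$ (\Cref{lemDensitiesGeneral} applied to the corresponding similarity) and that density is unchanged under translation, I obtain $\theta(\mc{P})=|y(1-\z)|^{-2}\theta(\mc{T}_{\mc{V}})$. It then remains to substitute the identities $|1-\z|^2=2-(\z+\z^4)=3-\t$, $|\s(1-\z)|^2=|1-\z^2|^2=2-(\z^2+\z^3)=2+\t$, $(3-\t)(2+\t)=5$, $y\,\s(y)=N(y)$ and $\vol(\mc{V})=|\s(y(1-\z))|^{-2}\vol(\mc{W})$, which combine to
\[\theta(\mc{P})=\frac{1}{y^2(3-\t)}\cdot\frac{4}{5(2\t-1)}\cdot\frac{\vol(\mc{W})}{(2+\t)\,\s(y)^2}=\frac{4\vol(\mc{W})}{25(2\t-1)N(y)^2}.\]

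For the final assertion, $\mc{P}_\e=\bigcup_{k=1}^4\Lambda_{k,\e}$ is a \emph{disjoint} union of cut-and-project sets (the four pieces are distinguished by the value of $\kappa$), so the densities add: $\theta(\mc{P}_\e)=\sum_{k=1}^4\theta(\Lambda_{k,\e})$. Applying the first part with $y=1$ yields $\theta(\Lambda_{k,\e})=\frac{4\vol(\mc{W}_{k,\e})}{25(2\t-1)}$, and since $\mc{W}_{1,\e},\mc{W}_{4,\e}$ are translates of $\mc{W}_1$ while $\mc{W}_{2,\e},\mc{W}_{3,\e}$ are translates of $\pm\t\mc{W}_1$, we get $\sum_{k=1}^4\vol(\mc{W}_{k,\e})=2(1+\t^2)\vol(\mc{W}_1)$, whence the stated formula. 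The one step that genuinely uses the hypotheses is the coset identification in the first paragraph (it needs $\gcd(y,1-\z)=1$ and that $\Z[\z]$ is a PID); everything afterwards is bookkeeping of normalisation constants, the recurring elementary facts being $\z+\z^4=\t-1$ and $\z^2+\z^3=-\t$. One should also check in passing that every point set appearing above is a \emph{regular} cut-and-project set so that \Cref{propDensity} applies and all densities exist, which is immediate from Jordan measurability of the windows together with the properties of $\mc{L}$ recorded in \Cref{secATsets}.
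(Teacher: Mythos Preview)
Your proof is correct and follows essentially the same route as the paper's: identify the coset $\{x\in\Z[\z]:\kappa(x)=k,\ y\mid x\}$ as a translate of the ideal $y(1-\z)\Z[\z]$, thereby realising $\mc{P}$ as an affine image of a $\mc{T}$-set, and compute the density via \Cref{propDensity} and \Cref{lemDensitiesGeneral}. The paper compresses this into a single sentence (``$\mc{P}$ can be identified with the translate of a set of the form \eqref{eqnTCPS}''), while you have carefully spelled out the coset identification, the need for $\gcd(y,1-\z)=1$, and the normalisation constants; your version could serve as an expansion of the paper's sketch.
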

	
	\begin{proof}
		By observing that $\k(x)=k$ if and only if $x\in k+(1-\z)$ we see that $\mc{P}$ can be identified with the translate of a set of the form \eqref{eqnTCPS}, whose density can be calculated by \Cref{propDensity} and \Cref{lemDensitiesGeneral}, and the first claim follows.
		
		The formula for $\theta(\mc{P}_\e)$ follows from the definition of $\mc{P}_\e$ and the first claim.
	\end{proof}
	
	\begin{lem}
		\label{lemPenroseBig} Fix $\epsilon\in\C$ with $|\e|<0.1$.
		For a finite subset $F\subset \bb{P}\smpt{3-\tau}$, let $\Pi_F$ denote the product of the elements of $F$. 
		
		\begin{itemize}
			\item[\emph{(i)}] Let $\mathcal{P}_\e(F,\t)=\mathcal{P}_\epsilon\cap \tau\mathcal{P}_\epsilon\cap\bigcap_{\pi\in F}\pi\mathcal{P}_\epsilon$. Then \[\mathcal{P}_\e(F,\t)=\bigcup_{k=1}^4\{x\in\bb{Z}[\zeta]: \kappa(x)=k,\Pi_F\mid x,\sigma(x)\in \mc{W}'_{k,\e}\},\]
			where $\mc{W}'_{1,\e}=\mc{W}_{1,\e}\cap \mc{W}_{1,-\e/\t}$, $\mc{W}'_{2,\e}=\inv{\t}\mc{W}_{1,-\e}$, $\mc{W}'_{3,\e}=-\inv{\t}\mc{W}_{1,\e}$ and $\mc{W}'_{4,\e}=(-\mc{W}_{1,-\e})\cap (-\mc{W}_{1,\e/\t})$.
			
			\item[\emph{(ii)}] Let $\mathcal{P}_\e(F,\t^2)=\mathcal{P}_\epsilon\cap \tau^2\mathcal{P}_\epsilon\cap\bigcap_{\pi\in F}\pi\mathcal{P}_\epsilon$. Then \[\mathcal{P}_\e(F,\t^2)=\bigcup_{k=1}^4\{x\in\bb{Z}[\zeta]: \kappa(x)=k,\Pi_F\mid x,\sigma(x)\in \mc{W}'_{k,\e}\},\]
			where $\mc{W}'_{1,\e}=-\t^{-2}\mc{W}_{1,-\e}$, $\mc{W}'_{2,\e}=\inv{\t}\mc{W}_{1,\e/\t}$, $\mc{W}'_{3,\e}=-\inv{\t}\mc{W}_{1,-\e/\t}$ and $\mc{W}'_{4,\e}=\t^{-2}\mc{W}_{1,\epsilon}$.
			
			\item[\emph{(iii)}] Let $\mathcal{P}_\e(F,\t,\t^2)=\mathcal{P}_\epsilon\cap\tau\mathcal{P}_\epsilon \cap \tau^2\mathcal{P}_\epsilon\cap \bigcap_{\pi\in F}\pi\mathcal{P}_\epsilon$. Then
			\[\mathcal{P}_\e(F,\t,\t^2)=\bigcup_{k=1}^4\{x\in\bb{Z}[\zeta]: \kappa(x)=k,\Pi_F\mid x,\sigma(x)\in \mc{W}'_{k,\e}\},\]
			where $\mc{W}'_{1,\e}=-\t^{-2}\mc{W}_{1,-\e}$, $\mc{W}'_{2,\e}=\inv{\t}(\mc{W}_{1,-\e}\cap \mc{W}_{1,\e/\t})$, $\mc{W}'_{3,\e}=-\inv{\t}(\mc{W}_{1,\e}\cap \mc{W}_{1,-\e/\t})$ and $\mc{W}'_{4,\e}=\t^{-2}\mc{W}_{1,\epsilon}$.
			
			\item[\emph{(iv)}] The densities of $\mathcal{P}_\e(F,\t)$, $\mathcal{P}_\e(F,\t^2)$ and $\mathcal{P}_\e(F,\t,\t^2)$ exist and are equal to
			\[\frac{4\sum_{k=1}^4\vol(\mathcal{W}'_{k,\e})}{25(2\tau-1)N(\Pi_F)^2},\]
			with the appropriate $\mathcal{W}'_{k,\e}$ defined in \emph{(i)}--\emph{(iii)}.
		\end{itemize}
	\end{lem}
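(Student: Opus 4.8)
The plan is to prove (i)--(iii) by translating membership of a point $x\in\mathcal{P}_\epsilon$ in each of $\tau\mathcal{P}_\epsilon$, $\tau^2\mathcal{P}_\epsilon$ and $\pi\mathcal{P}_\epsilon$ ($\pi\in F$) into a condition on the algebraic conjugate $\sigma(x)$ and on divisibility of $x$, and then to deduce (iv) by splitting the resulting sets according to the value of $\kappa$ and invoking \Cref{lemDensityP1}. First I would dispose of the prime factors. Fix $x\in\mathcal{P}_\epsilon$ and set $k=\kappa(x)\in\{1,2,3,4\}$, so $\sigma(x)\in\mathcal{W}_{k,\epsilon}$. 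For $\pi\in\bb{P}\smpt{3-\tau}$ one shows $x\in\pi\mathcal{P}_\epsilon$ if and only if $\pi\mid x$ in $\bb{Z}[\zeta]$: the ``only if'' is trivial, and for ``if'' note that $\kappa(\pi)\neq 0$ forces $\kappa(x/\pi)\in\{1,2,3,4\}$ while \Cref{lemSizeOfPrimesT} gives $|\sigma(x/\pi)|\le|\sigma(x)|/(2\tau)$, so star-shapedness of $\mathcal{W}_{k,\epsilon}$ together with $\pm\tfrac{1}{2\tau}\mathcal{W}_{k_1,\epsilon}\subset\mathcal{W}_{k_2,\epsilon}$ (valid since $|\epsilon|<0.1$) yields $\sigma(x/\pi)\in\mathcal{W}_{\kappa(x/\pi),\epsilon}$, i.e.\ $x/\pi\in\mathcal{P}_\epsilon$. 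Since distinct primes of $\mathcal{O}_K$ generate coprime ideals of $\bb{Z}[\zeta]$, membership in $\bigcap_{\pi\in F}\pi\mathcal{P}_\epsilon$ is equivalent to $\Pi_F\mid x$, and this is independent of the $\tau$- and $\tau^2$-conditions.

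Next come the unit factors. As $\tau$ is a unit of $\bb{Z}[\zeta]$, $x/\tau^j\in\bb{Z}[\zeta]$ for all $x\in\bb{Z}[\zeta]$; from $\kappa(\tau)=3$ and $\sigma(\tau)=1-\tau=-\tau^{-1}$ one computes $\kappa(x/\tau)\equiv 2k$, $\kappa(x/\tau^2)\equiv 4k\pmod 5$, and $\sigma(x/\tau)=-\tau\sigma(x)$, $\sigma(x/\tau^2)=\tau^2\sigma(x)$. Hence $x\in\tau\mathcal{P}_\epsilon$ iff $\sigma(x)\in -\tau^{-1}\mathcal{W}_{2k\bmod 5,\epsilon}$, and $x\in\tau^2\mathcal{P}_\epsilon$ iff $\sigma(x)\in \tau^{-2}\mathcal{W}_{4k\bmod 5,\epsilon}$. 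Substituting $\mathcal{W}_{1,\epsilon}=\mathcal{W}_1+\epsilon$, $\mathcal{W}_{2,\epsilon}=-\tau\mathcal{W}_1+\epsilon$, $\mathcal{W}_{3,\epsilon}=\tau\mathcal{W}_1+\epsilon$, $\mathcal{W}_{4,\epsilon}=-\mathcal{W}_1+\epsilon$ and simplifying with $\tau^2=\tau+1$ rewrites each of these as $\sigma(x)\in c\mathcal{W}_1+\epsilon'$ with explicit $c\in\{\pm\tau^{-1},\pm\tau^{-2}\}$ and $\epsilon'\in\{\pm\epsilon,\pm\epsilon/\tau\}$; for instance, for $k=1$ the $\tau$-condition becomes $\sigma(x)\in\mathcal{W}_{1,-\epsilon/\tau}$ and the $\tau^2$-condition becomes $\sigma(x)\in-\tau^{-2}\mathcal{W}_{1,-\epsilon}$. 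Intersecting, in each part, the window $\mathcal{W}_{k,\epsilon}$ coming from $x\in\mathcal{P}_\epsilon$ with the relevant extra window(s) then produces exactly the sets $\mathcal{W}'_{k,\epsilon}$ listed in (i)--(iii): for $k\in\{2,3\}$ the smaller ($\tau^{-1}$- or $\tau^{-2}$-scaled) window is contained in $\mathcal{W}_{k,\epsilon}$ so the intersection collapses, while for $k\in\{1,4\}$ one is left with a genuine two-fold intersection of pentagons (and in (iii) still effectively one-fold, after noting the $\tau^{-2}$-window sits inside the $\tau^{-1}$-window). This gives the stated equalities.

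For (iv), combine (i)--(iii): each of $\mathcal{P}_\epsilon(F,\tau)$, $\mathcal{P}_\epsilon(F,\tau^2)$, $\mathcal{P}_\epsilon(F,\tau,\tau^2)$ is the disjoint union over $k\in\{1,\ldots,4\}$ of $\{x\in\bb{Z}[\zeta]:\kappa(x)=k,\ \Pi_F\mid x,\ \sigma(x)\in\mathcal{W}'_{k,\epsilon}\}$. Each $\mathcal{W}'_{k,\epsilon}$ is a finite intersection of affine images of the pentagon $\mathcal{W}_1$, hence a Jordan measurable convex polygon, and $\Pi_F$, being a product of primes none associate to $3-\tau$, lies in $\bb{Z}[\tau]\smpt{(3-\tau)}$; so \Cref{lemDensityP1} applies to each summand and gives density $\tfrac{4\vol(\mathcal{W}'_{k,\epsilon})}{25(2\tau-1)N(\Pi_F)^2}$. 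Adding over the (disjoint) $k$ gives the claimed formula, and in particular the three densities exist.

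The main obstacle, and the only genuinely case-by-case part, is verifying the inclusions among translated and rescaled pentagons that make the intersections in (i)--(iii) collapse to the stated forms --- typically of the shape $\tau^{-1}\mathcal{W}_{1,\pm\epsilon}\subset\mathcal{W}_{2,\epsilon}$, $-\tau^{-1}\mathcal{W}_{1,\pm\epsilon}\subset\mathcal{W}_{3,\epsilon}$, or $\pm\tau^{-2}\mathcal{W}_{1,\mp\epsilon}\subset\mathcal{W}_{1,\pm\epsilon}\cap\mathcal{W}_{1,\pm\epsilon/\tau}$, and their analogues for the remaining $k$. Each reduces to comparing the circumradius of the inner pentagon (at most $\tau^{-1}$) with the inradius of the outer one (which is $\tfrac{\tau}{2}$ times its circumradius, in the worst vertex-against-edge alignment) against an extra translation of size $<\tfrac{\tau^2}{10}$, all of which hold comfortably once $|\epsilon|<0.1$. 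I would package all of these inclusions as a single preliminary lemma valid for $|\epsilon|<0.1$ and quote it, so that the proofs of (i)--(iii) reduce to careful bookkeeping of the translation parameter ($\epsilon$ versus $-\epsilon$ versus $\pm\epsilon/\tau$) in each of the twelve-odd cases.
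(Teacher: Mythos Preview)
Your proposal is correct and follows essentially the same route as the paper's proof: translate the conditions $x\in\tau^j\mathcal{P}_\epsilon$ and $x\in\pi\mathcal{P}_\epsilon$ into window and divisibility constraints via the formulas $\kappa(x/\tau)\equiv 2k$, $\sigma(x/\tau)=-\tau\sigma(x)$, etc., simplify the resulting intersections of pentagons using elementary inclusions valid for $|\epsilon|<0.1$, and then read off (iv) from \Cref{lemDensityP1}. The paper does the same bookkeeping case by case (illustrating $k=1$ and leaving the rest to the reader), whereas you separate out the prime condition first and propose to collect the pentagon inclusions into a single preliminary lemma; this is a minor organisational difference, not a mathematical one.
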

	
	\begin{proof}
	\begin{enumerate}[(i)]
		\item This equality is proved by treating each of the cases $\k(x)=k$ separately. These cases are similar, hence we will only discuss the case $k=1$ here.
		
		Take $x$ in the left hand side of the equality with $\k(x)=1$. Then, $\k(x/\t)=2$ so $\sigma(x/\t)=-\t\sigma(x)\in \mc{W}_{2,\e}$ which implies that $\sigma(x)\in \mc{W}'_{1,\e}$. Since $\Pi_F\mid x$, it follows that $x$ is an element of the right hand-side. For the reverse inclusion, note that  $\pm\frac{1}{\tau^2}\mathcal{W}'_{k_1,0}\subset \mathcal{W}'_{k_2,0}$ for all $k_1,k_2$. Thus also $\pm\frac{1}{2\tau}\mathcal{W}'_{k_1,\e}\subset \mathcal{W}'_{k_2,\e}$ holds for $\e$ sufficiently small. This can be verified to hold for $|\e|<0.1$ whence the conclusion follows by \Cref{lemSizeOfPrimesT}.
		
		\item As in (i), we discuss the case $k=1$ only. Again it is straightforward to verify that $x$ in the left hand side with $\k(x)=1$ belongs to the right-hand side. For the reverse inclusion note again that $\pm\frac{1}{\tau^2}\mathcal{W}'_{k_1,\e}\subset \mathcal{W}'_{k_2,\e}$ for all $k_1,k_2$ for $\e=0$ whence $\pm\frac{1}{2\tau}\mathcal{W}'_{k_1,\e}\subset \mathcal{W}'_{k_2,\e}$ holds for sufficiently small $\e$ as well, in particular for $|\e|<0.1$.
		
		\item As in (i), we discuss the case $k=1$ only. Take $x$ in the left hand side with $\k(x)=1$. Then $\kappa(x/\tau)=2$ and $\kappa(x/\tau^2)=4$, so $\sigma(x/\tau)=-\tau\sigma(x)\in \mathcal{W}_{2,\epsilon}$ and $\sigma(x/\tau^2)=\tau^2\sigma(x)\in \mathcal{W}_{4,\epsilon}$. Thus, 
		\[\sigma(x)\in \mathcal{W}_{1,\epsilon}\cap-\inv{\t}\mathcal{W}_{2,\epsilon} \cap\t^{-2}\mathcal{W}_{4,\epsilon}=\t^{-2}\mathcal{W}_{4,\epsilon},\]
		since $|\e|<0.1$. We have that $\t^{-2}\mathcal{W}_{4,\epsilon}=\mc{W}'_{1,\e}$ so $x$ belongs to the right-hand side.
		
		For the reverse inclusion, note again that $\pm\frac{1}{\tau^2}\mathcal{W}'_{k_1,\e}\subset \mathcal{W}'_{k_2,\e}$ for all $k_1,k_2$ for $\e=0$, whence $\pm\frac{1}{2\tau}\mathcal{W}'_{k_1,\e}\subset \mathcal{W}'_{k_2,\e}$ holds for sufficiently small $\e$ as well, in particular for $|\e|<0.1$.
		
		\item This is a consequence of \Cref{lemDensityP1}.
	\end{enumerate}
	\end{proof}

	We can now prove the following theorem, which gives the density of visible points of $\mc{P}_\e$ for $|\e|<0.1$.
	
	\begin{thm}
		\label{thmDenVisPenrose} For $\e\in\C$ with $|\e|<0.1$ we have
		\[\theta(\widehat{\mc{P}_\e})=\frac{(3+\t)\vol(\mc{W}_1)-\vol(\mc{W}_1\cap(\mathcal{W}_1+\t\e))}{3(\t+2)\z_K(2)}.\]
	\end{thm}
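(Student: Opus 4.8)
The plan is to follow the route used for \Cref{thmDensVisA2}. Recall from the discussion after \Cref{propVisCondP} that $C=(\bb{P}\smpt{3-\t})\cup\{\t,\t^2\}$ is a set of occlusion quotients for $\mc{P}_\e$ when $|\e|<0.1$. Fix a Jordan measurable $D\subset\R^2$ with $\vol(D)>0$. By \Cref{lemInclExcl},
\[\frac{\#(\widehat{\mc{P}_\e}\cap TD)}{\vol(TD)}=\sum_{\substack{F\subset C\\\#F<\infty}}(-1)^{\#F}\frac{\#\big((\mc{P}_\e)_*\cap\bigcap_{c\in F}c(\mc{P}_\e)_*\cap TD\big)}{\vol(TD)},\]
a finite sum for each fixed $T$. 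The first step is to justify passing to the limit $T\to\infty$ term by term. Writing $F'=F\cap(\bb{P}\smpt{3-\t})$, \Cref{lemPenroseBig}, together with \Cref{lemSizeOfPrimesT} and the inclusions $\pm\tfrac{1}{2\t}\mc{W}_{k_1,\e}\subset\mc{W}_{k_2,\e}$ in the case $F\cap\{\t,\t^2\}=\emptyset$, shows that each set $\mc{P}_\e\cap\bigcap_{c\in F}c\mc{P}_\e$ is a union of four sets of the form $\{x\in\Z[\z]:\kappa(x)=k,\ \Pi_{F'}\mid x,\ \sigma(x)\in\mc{W}'\}$ with $\mc{W}'$ bounded; after enlarging $\mc{W}'$ to an origin-centred ball and identifying $\Z[\z]$ with $\mc{O}_K^2$ by a fixed linear change of variables, \Cref{lemEstimate1} yields a bound $\leq LT^2/N(\Pi_{F'})^2$ with $L$ independent of $F$, $\e$ and $T$. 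Since $\sum_{F'\subset\bb{P}\smpt{3-\t}}N(\Pi_{F'})^{-2}=\prod_{\pi\in\bb{P}\smpt{3-\t}}(1+N(\pi)^{-2})<\infty$, the tail of the sum over $F$ with $N(\Pi_{F'})^2\geq\Delta$ is $O(\Delta^{-1})$ uniformly in $T$, exactly as in \eqref{eqnEstimates}; hence the limit is termwise and
\[\theta(\widehat{\mc{P}_\e})=\sum_{\substack{F\subset C\\\#F<\infty}}(-1)^{\#F}\,\theta\Big(\mc{P}_\e\cap\bigcap_{c\in F}c\mc{P}_\e\Big).\]

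Next I would regroup this series by splitting $F=G\cup F'$ into its unit part $G:=F\cap\{\t,\t^2\}$ (four possibilities) and its prime part $F':=F\cap(\bb{P}\smpt{3-\t})$, so that $(-1)^{\#F}=(-1)^{\#G}(-1)^{\#F'}$. For $G=\emptyset$ the set $\mc{P}_\e\cap\bigcap_{c\in F}c\mc{P}_\e$ equals $\{x\in\mc{P}_\e:\Pi_{F'}\mid x\}$, of density $\theta(\mc{P}_\e)/N(\Pi_{F'})^2$ by \Cref{lemDensityP1}; for $G=\{\t\},\{\t^2\},\{\t,\t^2\}$ it equals $\mc{P}_\e(F',\t)$, $\mc{P}_\e(F',\t^2)$, $\mc{P}_\e(F',\t,\t^2)$ respectively, of densities given in \Cref{lemPenroseBig}(iv). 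In all four cases the density has the form $d_G(\e)/N(\Pi_{F'})^2$ with $d_G(\e)$ independent of $F'$. Factoring out
\[\sum_{\substack{F'\subset\bb{P}\smpt{3-\t}\\\#F'<\infty}}\frac{(-1)^{\#F'}}{N(\Pi_{F'})^2}=\prod_{\pi\in\bb{P}\smpt{3-\t}}\Big(1-\frac{1}{N(\pi)^2}\Big)=\frac{1}{\big(1-N(3-\t)^{-2}\big)\z_K(2)}=\frac{25}{24\,\z_K(2)},\]
where $N(3-\t)=5$ because $(3-\t)$ is the prime above the ramified rational prime $5$, we are left with
\[\theta(\widehat{\mc{P}_\e})=\frac{25}{24\,\z_K(2)}\Big(d_\emptyset(\e)-d_{\{\t\}}(\e)-d_{\{\t^2\}}(\e)+d_{\{\t,\t^2\}}(\e)\Big).\]

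It then remains to compute the four constants and simplify. Put $V=\vol(\mc{W}_1)$ and $U=\vol\big(\mc{W}_1\cap(\mc{W}_1+\t\e)\big)$. Using $\vol(a+\mc{W}_1)=V$, $\vol(r\mc{W}_1)=r^2V$, the identities $\vol\big((\mc{W}_1+a)\cap(\mc{W}_1+b)\big)=\vol\big(\mc{W}_1\cap(\mc{W}_1+b-a)\big)=\vol\big(\mc{W}_1\cap(\mc{W}_1+a-b)\big)$, and the golden-ratio relation $1+\t^{-1}=\t$ (which is precisely what makes every intersection volume occurring in \Cref{lemPenroseBig} collapse to $U$), one reads off
\[d_\emptyset=\tfrac{8(1+\t^2)V}{25(2\t-1)},\quad d_{\{\t\}}=\tfrac{8(U+\t^{-2}V)}{25(2\t-1)},\quad d_{\{\t^2\}}=\tfrac{8(\t^{-4}+\t^{-2})V}{25(2\t-1)},\quad d_{\{\t,\t^2\}}=\tfrac{8(\t^{-4}V+\t^{-2}U)}{25(2\t-1)}.\]
Substituting $\t^2=\t+1$, $\t^{-2}=2-\t$, $\t^{-4}=5-3\t$, the alternating combination simplifies to $\tfrac{8}{25(2\t-1)}\big((3\t-2)V+(1-\t)U\big)$; and since $(\t+2)(3\t-2)=(2\t-1)(3+\t)=7\t-1$ and $(\t+2)(1-\t)=-(2\t-1)$, this equals $\tfrac{8}{25(\t+2)}\big((3+\t)V-U\big)$. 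Substituting back gives $\theta(\widehat{\mc{P}_\e})=\dfrac{(3+\t)V-U}{3(\t+2)\z_K(2)}$, the asserted formula. The main obstacle is the bookkeeping in this last paragraph: matching each unit part $G$ to the correct set of \Cref{lemPenroseBig} and verifying that each of the many translated-and-scaled pentagon intersections there really does reduce to $\vol\big(\mc{W}_1\cap(\mc{W}_1+\t\e)\big)$; the remainder is a direct adaptation of the proof of \Cref{thmDensVisA2}.
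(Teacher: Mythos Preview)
Your proof is correct and follows essentially the same route as the paper: the same occlusion set $C=(\bb{P}\smpt{3-\t})\cup\{\t,\t^2\}$, the same four-way split according to $F\cap\{\t,\t^2\}$, the same tail estimate via \Cref{lemEstimate1}, and the same Euler-product factor $\tfrac{25}{24\zeta_K(2)}$. The only cosmetic difference is in the final simplification: the paper packages the four densities using the two identities $\theta(\mc{P}_\e)/\t^4=\theta(\mc{P}_\e\cap\t^2\mc{P}_\e)$ and $\theta(\mc{P}_\e\cap\t\mc{P}_\e)/\t^2=\theta(\mc{P}_\e\cap\t\mc{P}_\e\cap\t^2\mc{P}_\e)$ to write the bracket as $(1-\t^{-4})\theta(\mc{P}_\e)-(1-\t^{-2})\theta(\mc{P}_\e\cap\t\mc{P}_\e)$, whereas you compute all four $d_G$ explicitly and reduce with golden-ratio identities; the algebra is equivalent and your verification that every pentagon intersection has volume $U=\vol(\mc{W}_1\cap(\mc{W}_1+\t\e))$ via $1+\t^{-1}=\t$ is exactly what underlies the paper's shortcut.
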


	\begin{proof}
	Observe that $0\notin \mathcal{P}_\epsilon$, since $\kappa(0)=0$. Hence $(\mathcal{P}_\epsilon)_*=\mathcal{P}_\epsilon$. An application of \Cref{lemInclExcl}, with $C=(\bb{P}\smpt{3-\t})\cup\{\t,\t^2\}$, yields for any Jordan measurable $D\subset \R^2$ with $\vol(D)>0$
	\begin{align}
		\frac{\#(\widehat{\mc{P}_\e}\cap TD)}{\vol(TD)} &= \sum_{\substack{F\subset \bb{P}\setminus\{3-\t\}\\\#F<\infty}}(-1)^{\#F}\frac{\#(\mathcal{P}_\e\cap \bigcap_{\pi\in F}\pi\mathcal{P}_\e\cap TD)}{\vol(TD)}\label{eqnEstimatesPenrose1-1}\\
		&\qquad-\sum_{\substack{F\subset \bb{P}\setminus\{3-\t\}\\\#F<\infty}}(-1)^{\#F}\frac{\#(\mathcal{P}_\e\cap\t\mathcal{P}_\e\cap \bigcap_{\pi\in F}\pi\mathcal{P}_\e\cap TD)}{\vol(TD)}\label{eqnEstimatesPenrose1-2}\\
		&\qquad-\sum_{\substack{F\subset \bb{P}\setminus\{3-\t\}\\\#F<\infty}}(-1)^{\#F}\frac{\#(\mathcal{P}_\e\cap\t^2\mathcal{P}_\e\cap \bigcap_{\pi\in F}\pi\mathcal{P}_\e\cap TD)}{\vol(TD)}\label{eqnEstimatesPenrose1-3}\\
		&\qquad+\sum_{\substack{F\subset \bb{P}\setminus\{3-\t\}\\\#F<\infty}}(-1)^{\#F}\frac{\#(\mathcal{P}_\e\cap\t\mathcal{P}_\e\cap\t^2\mathcal{P}_\e\cap \bigcap_{\pi\in F}\pi\mathcal{P}_\e\cap TD)}{\vol(TD)}.\label{eqnEstimatesPenrose1-4}
	\end{align}
	
	To obtain $\theta(\widehat{\mc{P}_\e})$, we let $T\to\infty$ in the above. In the right-hand side we have to switch order of limit and summation. We show that this is possible for the fourth term \eqref{eqnEstimatesPenrose1-4}; one can proceed analogously with the other terms. To this end, let $\Delta>0$ be given. The terms of \eqref{eqnEstimatesPenrose1-4} converge to \[(-1)^{\#F}\theta(\mc{P}_\e(F,\t,\t^2))=(-1)^{\#F}\frac{\theta(\mc{P}_\e\cap\t\mc{P}_\e\cap \t^2\mc{P}_\e)}{N(\Pi_F)^2}\]
	as $T\to\infty$, where $\Pi_F$ denotes the product of the elements of $F$. By \Cref{lemSizeOfPrimesT} there are only finitely many finite subsets $F\subset C$ with $N(\Pi_F)^2<\Delta$. Now
	\begin{align}
		&\left|\lim_{T\to\infty}\sum_{\substack{F\subset \bb{P}\setminus\{3-\t\}\\\#F<\infty}}(-1)^{\#F}\frac{\#(\mathcal{P}_\e(F,\t,\t^2)\cap TD)}{\vol(TD)}-\sum_{\substack{F\subset \bb{P}\setminus\{3-\t\}\\\#F<\infty}}(-1)^{\#F}\theta(\mc{P}_\e(F,\t,\t^2))\right|\nonumber\\
		& \leq \lim_{T\to\infty}\sum_{\substack{F\subset \bb{P}\setminus\{3-\t\}\\\#F<\infty\\N(\Pi_F)^2\geq \Delta}}\frac{\#(\mathcal{P}_\e(F,\t,\t^2)\cap TD)}{\vol(TD)}+\theta(\mc{P}_\e\cap\t\mc{P}_\e\cap \t^2\mc{P}_\e)\sum_{\substack{F\subset \bb{P}\setminus\{3-\t\}\\\#F<\infty\\N(\Pi_F)^2\geq \Delta}}\frac{1}{N(\Pi_F)^2}\label{eqnEstimatesPenrose2}
	\end{align}
	By \Cref{lemPenroseBig} (iii) we have
	\[\mathcal{P}_\e(F,\t,\t^2)\subset \{x\in \Z[\z]:\Pi_F\mid x,\sigma(x)\in \mathcal{W}\},\]
	with e.g.\ $\mc{W}=\bigcup_{k=1}^4\mc{W}'_{k,\e}$ and thus by \Cref{lemEstimate1}, there is a constant $L$ independent of $\Delta$ such that \eqref{eqnEstimatesPenrose2} is bounded by
	\[\sum_{\substack{F\subset \bb{P}\setminus\{3-\t\}\\\#F<\infty\\N(\Pi_F)^2\geq \Delta}}\frac{L}{N(\Pi_F)^2}+\theta(\mc{P}_\e\cap\t\mc{P}_\e\cap \t^2\mc{P}_\e)\sum_{\substack{F\subset \bb{P}\setminus\{3-\t\}\\\#F<\infty\\N(\Pi_F)^2\geq \Delta}}\frac{1}{N(\Pi_F)^2}\]
	which goes to $0$ as $\Delta\to\infty$ since \[\sum_{\substack{F\subset \bb{P}\setminus\{3-\t\}\\\#F<\infty\\N(\Pi_F)^2\geq \Delta}}\frac{1}{N(\Pi_F)^2}
	\leq \sum_{I\in \mathfrak{I}, N(I)^2\geq \Delta}\frac{|\mu(I)|}{N(I)^2}.\] Similar treatment of the sums \eqref{eqnEstimatesPenrose1-1}--\eqref{eqnEstimatesPenrose1-3} yields
	\[\theta(\widehat{\mc{P}_\e})=(\theta(\mc{P}_\e)-\theta(\mc{P}_\e\cap\t\mc{P}_\e)-\theta(\mc{P}_\e\cap\t^2\mc{P}_\e)+\theta(\mc{P}_\e\cap\t\mc{P}_\e \cap\t^2\mc{P}_\e))\sum_{\substack{F\subset \bb{P}\setminus\{3-\t\}\\\#F<\infty}}\frac{(-1)^{\#F}}{N(\Pi_F)^2}.\]
	From the second part of \Cref{lemDensityP1} and \Cref{lemPenroseBig} (iv) it follows that \[\frac{\theta(\mc{P}_\e)}{\t^4}=\theta(\mc{P}_\e\cap\t^2\mc{P}_\e) \qquad\text{and}\qquad \frac{\theta(\mc{P}_\e\cap\t\mc{P}_\e)}{\t^2}=\theta(\mc{P}_\e\cap\t\mc{P}_\e \cap\t^2\mc{P}_\e).\] Therefore
	\begin{align*}
		\theta(\widehat{\mc{P}_\e})&=((1-1/\t^4)\theta(\mc{P}_\e)-(1-1/\t^2)\theta(\mc{P}_\e\cap\t\mc{P}_\e))\sum_{\substack{F\subset \bb{P}\setminus\{3-\t\}\\\#F<\infty}}\frac{(-1)^{\#F}}{N(\Pi_F)^2}\\
		&=\frac{8(3+\t)\vol(\mc{W}_1)-8\vol(\mc{W}_1\cap(\mathcal{W}_1+\t\e))}{25(\t+2)}\sum_{\substack{F\subset \bb{P}\setminus\{3-\t\}\\\#F<\infty}}\frac{(-1)^{\#F}}{N(\Pi_F)^2},
	\end{align*}
	where 
	\begin{align*}
		\sum_{\substack{F\subset \bb{P}\setminus\{3-\t\}\\\#F<\infty}}\frac{(-1)^{\#F}}{N(\Pi_F)^2}&=\prod_{\pi\in \bb{P}\smpt{3-\t}}\left(1-\frac{1}{N(\pi)^2}\right)\\
		&=\left(1-\frac{1}{N(3-\t)^2}\right)^{-1}\prod_{P\in\mathfrak{P}}\left(1-\frac{1}{N(P)^2}\right)=\frac{25}{24\z_K(2)},
	\end{align*}
	and the proof is complete.
	\end{proof}

	We conclude this section by presenting some numerical support for \Cref{thmDenVisPenrose}, in the case of a particular $\e=\e_0$. Let $\gamma=\frac{1}{101}(2,1,-2-2,1)$ and set $\e_0=\sum_{j=0}^4\gamma_j\z^{2j}=-0.0084\ldots$. Note that $|\e_0|<0.1$ and that $\mc{P}_{\e_0}$ is the vertex set of a rhombic Penrose tiling by \Cref{lemExOfRegularPenta}. A numerical calculation of $\theta(\widehat{\mc{P}_{\e_0}})$ using \Cref{thmDenVisPenrose} yields 
	\[\theta(\widehat{\mc{P}_{\e_0}})=0.684307\ldots,\]
	compare with the third column of \Cref{table2} below. 
	
	\begin{table}[H]
		\centering	
		\begin{tabular}{lll}
			\hline
			\rule{0pt}{3ex} $T$\qquad\qquad& $\widehat{N}_T$\qquad\qquad &  $\widehat{N}_T/\vol(B_T(0))$ \qquad\qquad\\\hline
			$1500$ & $4\,835\,583$ & $0.684095\ldots$\\
			$1800$ & $6\,964\,297$ & $0.684198\ldots$\\
			$2000$ & $8\,599\,221$ & $0.684304\ldots$\\\hline
		\end{tabular}
		\caption{Numerical data for $\mc{P}_{\e_0}$, where $\e_0=\sum_{j=0}^4\gamma_j\z^{2j}$, $\gamma=\frac{1}{101}(2,1,-2-2,1)$ and $\widehat{N}_T=\#(B_T(0)\cap \widehat{\mc{P}_{\e_0}})$.}
		\label{table2}
	\end{table}
	
	\section{Calculation of $m_{\widehat{\mathcal{P}}}$}
	\label{secCalcMPhat}

	Given a unital ring $R$, let $\mathrm{ASL}(n,R)$ be $\SL{n}{R}\times R^n$ endowed with the group operation
	\[(A_1,v_1)(A_2,v_2)=(A_1A_2,v_1A_2+v_2).\] Let $n=d+m$ be given. Let $G=\ASL{n}{\R}$ and $\Gamma=\ASL{n}{\Z}$. Note that $G$ acts on $\R^n$ by $rg=rA+v$ for $r\in\R^n$ and $g=(A,v)\in G$. Hence $\mathcal{L}=\delta^{1/n}(\bb{Z}^ng)$ is an affine lattice for every $\d>0$, $g\in G$, and every affine lattice in $\R^n$ can be represented in this way. Let $\varphi_g:\ASL{d}{\R}\longrightarrow\ASL{n}{\R}$ be the map $(A,v)\mapsto g\left(\begin{pmatrix}
	A & 0\\
	0 & I_m
	\end{pmatrix},(v,0)\right)\inv{g}$. By the results of Ratner \cite{ratner1991raghunathanAnnals,ratner1991raghunathan} there exists a unique, closed, connected subgroup $H_g\subset G$ such that $\Gamma\cap H_g\subset H_g$ is a lattice, $\varphi_g(\SL{d}{\R})\subset H_g$ and the closure of $\Gamma\backslash\Gamma \varphi_g(\SL{d}{\R})$ in $\Gamma\backslash G$ is $\Gamma\backslash\Gamma H_g$. These results also imply the existence of a unique, closed, connected subgroup $\widetilde{H}_g\subset G$ such that $\Gamma\cap \widetilde{H}_g\subset \widetilde{H}_g$ is a lattice, $\varphi_g(\ASL{d}{\R})\subset \widetilde{H}_g$ and the closure of $\Gamma\backslash\Gamma \varphi_g(\ASL{d}{\R})$ in $\Gamma\backslash G$ is $\Gamma\backslash\Gamma \widetilde{H}_g$.
	
	Let $X$ be the homogeneous space $X=(\Gamma\cap H_g)\backslash H_g$.  Note that $X$ can be identified with $\Gamma\backslash\Gamma H_g$; let $\mu$ be the $H_g$-invariant probability measure on either of these spaces. Fix a bounded set $\mathcal{W}\subset \overline{\pi_{\mathrm{int}}(\mathcal{L})}\subset \R^m$ and define for $x=\Gamma h\in X$
	\[\mc{P}^x=\mc{P}(\mc{W},\delta^{1/n}(\Z^nhg))\subset\R^d.\]
	By taking a random $x\in X$ with respect to $\mu$, a point process $x\mapsto \mc{P}^x$ on $\R^d$ consisting of cut-and-project sets is obtained. This process is $\SL{d}{\R}$-invariant since $\varphi_g(\SL{d}{\R})\subset H_g$. The process $x\mapsto \mc{P}^x$ and the space $\{\mc{P}^x\mid x\in X\}$ were introduced in \cite{marklof2014free}. 
	
	Let now $\mathcal{P}=\mc{P}(\mc{W},\mc{L})\subset \R^2$ be a regular cut-and-project and let $F:\R\longrightarrow [0,1]$ be the limiting distribution of normalised gaps in $\mc{P}$ as defined in the introduction. In \cite{marklof2014visibility} it is shown that
	\[F(s)=-\frac{d}{ds}\mu(\{x\in X\mid \#(\widehat{\mc{P}^x}\cap\mathfrak{C}(\inv{\k_\mc{P}}s))=0\})\]
	for each $s>0$ where
	\[\mathfrak{C}(s)=\left\{(x_1,x_2)\in\R^2: 0<x_1<1, |x_2|<s/\theta(\mc{P})\right\}\]
	and $\k_{\mc{P}}=\frac{\theta(\widehat{\mc{P}})}{\theta(\mc{P})}$. In \cite[Section 12]{marklof2014visibility}, $m_{\widehat{\mc{P}}}$ is defined as
	\begin{equation}
		\label{defnMPhat2}
		m_{\widehat{\mc{P}}}=\sup\{s\geq 0\mid \#(\widehat{\mc{P}^x}\cap \mathfrak{C}(\inv{\k_\mc{P}}s))\leq 1\text{ for }\mu \text{-a.e. }x\in X\}
	\end{equation}
	and it is shown that $m_{\widehat{\mathcal{P}}}=\sup\{\sigma\geq 0\mid F(s)=1~\mathrm{for~ all}~s\in[0,\sigma]\}$. Thus the definition of $m_{\widehat{\mc{P}}}$ given in \eqref{defnMPhat2} is equivalent with the definition given in \eqref{defnMPhat1}. 
	The $\SL{2}{\R}$-invariance of the process $x\mapsto \mc{P}^x$ implies that $m_{\widehat{\mc{P}A}}=m_{\widehat{\mc{P}}}$ for all $A\in\SL{2}{\R}$. This invariance also implies that the value of $m_{\widehat{\mc{P}}}$ remains unaffected if $\mathfrak{C}(\inv{\k_\mc{P}}s)$ in \eqref{defnMPhat2} is replaced by any other triangle with one vertex at the origin and with equal area. We now claim that $m_{\widehat{\mc{P}}}=m_{\widehat{c\mc{P}}}$ for each $c>0$. For $x\in X$ we have $(c\mc{P})^x=c\mc{P}^x$. Hence, by \eqref{defnMPhat2} we have 
	\begin{equation*}
	m_{\widehat{c\mc{P}}}=\sup\{s\geq 0\mid \#(\widehat{c\mc{P}^x}\cap \mathfrak{C}(\inv{\k_{c\mc{P}}}s))\leq 1\text{ for }\mu \text{-a.e. }x\in X\}.
	\end{equation*} In view of \Cref{lemDensitiesGeneral} we have $\theta(\widehat{c\mc{P}})=c^{-2}\theta(\widehat{\mc{P}})$, which implies that the triangles $\inv{c}\mathfrak{C}(\inv{\k_{c\mc{P}}}s)$ and $\mathfrak{C}(\inv{\k_{\mc{P}}}s)$ have the same area. It follows that $m_{\widehat{c\mc{P}}}=m_{\widehat{\mc{P}}}$ and therefore also that $m_{\widehat{\mc{P}}}$ is invariant under $\mc{P}\mapsto \mc{P}A$ for $A\in\GL{2}{\R}$ with positive determinant.
	
	We now prove that the minimal gap of a regular cut-and-project set remains unchanged when replacing the window defining the cut-and-project set by its closure.
	
	\begin{lem}
		\label{lemMPhatClosure} Let $\mathcal{P}=\mc{P}(\mc{W},\mc{L})\subset \R^2$ be a regular cut-and-project set and let $\mathcal{P}_1=\mc{P}(\overline{\mc{W}},\mc{L})$, where $\overline{\mc{W}}$ is the closure of $\mc{W}$ in $\overline{\pi_{\mathrm{int}}(\mc{L})}$. Then $m_{\widehat{\mathcal{P}}}=m_{\widehat{\mathcal{P}_1}}$.
	\end{lem}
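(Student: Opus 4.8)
The plan is to reduce the equality $m_{\widehat{\mc{P}}}=m_{\widehat{\mc{P}_1}}$ to the single assertion that $\mc{P}^x=\mc{P}_1^x$ for $\mu$-almost every $x\in X$, and then to establish that assertion via a first-moment (Siegel-type) estimate for the point process. First I would fix a representation $\mc{L}=\delta^{1/n}(\Z^n g)$ with $\delta>0$ and $g\in G$, and observe that the subgroup $H_g$, the homogeneous space $X=(\Gamma\cap H_g)\backslash H_g$ and its invariant probability measure $\mu$ depend only on $g$ (equivalently, only on $\mc{L}$), not on the window; hence they are the same for $\mc{P}=\mc{P}(\mc{W},\mc{L})$ and for $\mc{P}_1=\mc{P}(\overline{\mc{W}},\mc{L})$, and for $x=\Gamma h\in X$ we have $\mc{P}^x=\mc{P}(\mc{W},\delta^{1/n}(\Z^n hg))$ and $\mc{P}_1^x=\mc{P}(\overline{\mc{W}},\delta^{1/n}(\Z^n hg))$. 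Note that $\mc{P}_1$ is again a regular cut-and-project set ($\overline{\mc{W}}$ is bounded with non-empty interior, $\overline{\mc{W}}\subset\overline{\pi_{\mathrm{int}}(\mc{L})}$, and $\partial\overline{\mc{W}}\subset\partial\mc{W}$ has Haar measure zero), so $m_{\widehat{\mc{P}_1}}$ is defined by \eqref{defnMPhat2}. Finally, as observed in the text following \Cref{lemDensitiesGeneral2}, $\theta(\mc{P})=\theta(\mc{P}_1)$ and $\theta(\widehat{\mc{P}})=\theta(\widehat{\mc{P}_1})$; hence $\k_{\mc{P}}=\k_{\mc{P}_1}$ and the triangles $\mathfrak{C}(\inv{\k_{\mc{P}}}s)$, $\mathfrak{C}(\inv{\k_{\mc{P}_1}}s)$ coincide for every $s>0$. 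Consequently, once we know that $\widehat{\mc{P}^x}=\widehat{\mc{P}_1^x}$ off a $\mu$-null set, the two sets of $s$ appearing in \eqref{defnMPhat2} (one for $\mc{P}$, one for $\mc{P}_1$) are literally equal, so the two suprema agree.

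It thus remains to prove that $\mc{P}^x=\mc{P}_1^x$ for $\mu$-a.e.\ $x$; this of course also gives $\widehat{\mc{P}^x}=\widehat{\mc{P}_1^x}$. Clearly $\mc{P}^x\subset\mc{P}_1^x$ for every $x$. Suppose $z\in\mc{P}_1^x\setminus\mc{P}^x$. Then every $y$ in the affine lattice $\delta^{1/n}(\Z^n hg)$ with $\pi(y)=z$ satisfies $\pi_{\mathrm{int}}(y)\notin\mc{W}$, while at least one such $y$ has $\pi_{\mathrm{int}}(y)\in\overline{\mc{W}}$; hence $\pi_{\mathrm{int}}(y)\in\overline{\mc{W}}\setminus\mc{W}\subset\partial\mc{W}$, so that $z\in\mc{P}(\partial\mc{W},\delta^{1/n}(\Z^n hg))=:\mc{B}^x$. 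Therefore it suffices to show that $\mc{B}^x=\emptyset$ for $\mu$-a.e.\ $x$.

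For this I would invoke the first-moment identity for the cut-and-project point process $x\mapsto\mc{P}(\,\cdot\,,\delta^{1/n}(\Z^n hg))$ from \cite{marklof2014free} — the integrated counterpart of \Cref{propDensity}; see also \cite[(2.4)]{marklof2014visibility} — which says that for any bounded $D\subset\R^2$ and any bounded Borel set $\mc{V}\subset\overline{\pi_{\mathrm{int}}(\mc{L})}$ the expected number of points $\int_X\#(\mc{P}(\mc{V},\delta^{1/n}(\Z^n hg))\cap D)\,d\mu(x)$ is proportional to the Haar measure of $\mc{V}$ in $\overline{\pi_{\mathrm{int}}(\mc{L})}$. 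Taking $\mc{V}=\partial\mc{W}$, which has Haar measure zero because $\mc{P}$ is regular, and $D=B_k(0)$ for $k\in\Z_{>0}$, we conclude that $\mc{B}^x\cap B_k(0)=\emptyset$ for $\mu$-a.e.\ $x$, for each $k$; intersecting these countably many conull sets gives $\mc{B}^x=\emptyset$ for $\mu$-a.e.\ $x$, as required. The main obstacle is exactly this last step — namely, certifying that $\mu$-almost every configuration of the process has no lattice point whose internal coordinate lies on $\partial\mc{W}$ — which is where the regularity hypothesis is used through the first-intensity formula for the $H_g$-invariant process; once that is in hand, the identification of the two suprema is purely formal.
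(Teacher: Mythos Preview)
Your proposal is correct and follows essentially the same approach as the paper: the key step in both is to invoke the Siegel-type first-moment formula (specifically \cite[Theorem~5.1]{marklof2014free}) together with the regularity hypothesis to conclude that for $\mu$-almost every $x=\Gamma h\in X$ the lattice $\delta^{1/n}\Z^n hg$ contains no point whose internal coordinate lies on $\partial\mc{W}$. The paper organises the argument slightly differently---it first notes $m_{\widehat{\mc{P}}}\geq m_{\widehat{\mc{P}_1}}$ from $\mc{P}^x\subset\mc{P}_1^x$, then for each $s_0<m_{\widehat{\mc{P}}}$ compares the sets $\{x:\#(\widehat{\mc{P}^x}\cap\mathfrak{C}(\kappa_{\mc{P}}^{-1}s_0))\le 1\}$ and its $\mc{P}_1$-analogue---whereas you go directly to the stronger statement $\mc{P}^x=\mc{P}_1^x$ almost surely; but the substance is the same.
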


	\begin{proof}
		Suppose $\mc{L}=\delta^{1/n}(\Z^ng)$ for some $g\in G$ and $\delta>0$. Since $\mc{P}(\mc{W},c\mc{L})=c\mc{P}(\inv{c}\mc{W},\mc{L})$ for $c>0$ and $m_{\widehat{c\mc{P}}}=m_{\widehat{\mc{P}}}$, we may assume that $\delta=1$. Let $X=\Gamma\bs \Gamma H_g$. Since $\mc{W}\subset \overline{\mc{W}}$, we have $\mc{P}^x\subset \mc{P}_1^x$ for all $x\in X$ and hence $m_{\widehat{\mathcal{P}}}\geq m_{\widehat{\mathcal{P}_1}}$ by \eqref{defnMPhat2}. Next, it is shown that $m_{\widehat{\mathcal{P}}}\leq m_{\widehat{\mathcal{P}_1}}$. To this end, take $s_0<m_{\widehat{\mathcal{P}}}$. Then
		$X':=\{x\in X\mid\#(\widehat{\mc{P}^x}\cap \mathfrak{C}(\inv{\k_\mc{P}}s_0))\leq 1\}$ satisfies $\mu(X')=1$. Let $X''\subset X'$ be the set $\{x\in X\mid\#(\widehat{\mc{P}_1^x}\cap \mathfrak{C}(\inv{\k_{\mc{P}_1}}s_0))\leq 1\}$. We will show that $\mu(X'')=1$ as well.
		
		By assumption, $\partial\mathcal{W}$ has measure $0$ with respect to Haar measure on $\overline{\pi_{\mathrm{int}}(\mc{L})}$. Hence, by applying \cite[Theorem 5.1]{marklof2014free} with $f=1_{\R^d\times \partial\mc{W}}$, we conclude that $ \Z^nhg\cap (\R^d\times \partial\mc{W})=\emptyset$ for $\mu$-almost every $\Gamma h=x\in X$. Now, the remark following \Cref{lemDensitiesGeneral2} gives $\k_{\mc{P}}=\k_{\mc{P}_1}$. Take $x\in X'\setminus X''$ and write $x=\Gamma h$ for some $h\in H_g$. Then, $\#(\widehat{\mc{P}_1^x}\cap \mathfrak{C}(\inv{\k_{\mc{P}}}s_0))\geq 2$ and $\#(\widehat{\mc{P}^x}\cap \mathfrak{C}(\inv{\k_{\mc{P}}}s_0))\leq 1$ hold. It follows that there is $y\in \Z^nhg$ with $\pi_{\mathrm{int}}(y)\in\partial \mc{W}$, which, by the above, can only hold for $x=\Gamma h$ in set of measure zero. Consequently, $\mu(X'\setminus X'')=0$.
	\end{proof}

	\Cref{lemMPhatClosure} implies that when determining $m_{\widehat{\mathcal{P}}}$ for a regular cut-and-project set $\mathcal{P}=\mc{P}(\mc{W},\mc{L})\subset \R^2$, $\mc{W}$ can be replaced with its interior, i.e.\ it may be assumed that $\mc{W}$ is open. In this case, the following lemma holds.
	
	\begin{lem}
		\label{lemRewriteMPhat} Let $\mc{L}=\delta^{1/n}(\Z^ng)$ for some $g\in G$ and let $X=\Gamma\bs \Gamma H_g$. Suppose $\mathcal{P}=\mc{P}(\mc{W},\mc{L})\subset \R^2$ is a regular cut-and-project set with $\mc{W}\subset \overline{\pi_{\mathrm{int}}(\mc{L})}$ open and $\theta(\mc{P})>0$. Then
		\begin{equation}\label{eqnMPHat1}
		m_{\widehat{\mc{P}}}=\inf\{s\geq 0\mid \exists x\in X:\#(\widehat{\mc{P}^x}\cap \mathfrak{C}(\inv{\k_\mc{P}}s))\geq 2\}.\end{equation}
	\end{lem}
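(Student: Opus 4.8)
The goal is to show that the supremum defining $m_{\widehat{\mc{P}}}$ in \eqref{defnMPhat2} coincides with the infimum on the right-hand side of \eqref{eqnMPHat1}. Write $m$ for the quantity in \eqref{defnMPhat2} and $m'$ for the infimum in \eqref{eqnMPHat1}. The plan is to prove $m \le m'$ and $m \ge m'$ separately, the heart of the matter being a monotonicity/continuity statement for the map $s \mapsto \mu(\{x \in X : \#(\widehat{\mc{P}^x}\cap \mathfrak{C}(\inv{\k_\mc{P}}s)) \le 1\})$ in combination with an ergodicity argument showing that the event ``$\#(\widehat{\mc{P}^x}\cap \mathfrak{C}(\inv{\k_\mc{P}}s)) \ge 2$'' has measure $0$ or is non-empty in a robust way.

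\textbf{The inequality $m \le m'$.} Suppose $s < m'$, so that for \emph{all} $x \in X$ we have $\#(\widehat{\mc{P}^x}\cap \mathfrak{C}(\inv{\k_\mc{P}}s)) \le 1$. In particular this holds for $\mu$-a.e.\ $x$, so $s \le m$ by \eqref{defnMPhat2}; letting $s \uparrow m'$ gives $m' \le m$. Conversely, for $m \ge m'$ I would argue by contraposition: if $s < m$, then $\#(\widehat{\mc{P}^x}\cap \mathfrak{C}(\inv{\k_\mc{P}}s)) \le 1$ for $\mu$-a.e.\ $x$, and I must upgrade ``a.e.'' to ``all'' for slightly smaller parameter values. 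Here is where openness of $\mc{W}$ enters: if there existed some $x_0 \in X$ with $\#(\widehat{\mc{P}^{x_0}}\cap \mathfrak{C}(\inv{\k_\mc{P}}s')) \ge 2$ for some $s' < s$, then the two witnessing visible points lie in the \emph{open} region $\mathfrak{C}(\inv{\k_\mc{P}}s')^\circ$ with $\pi_{\mathrm{int}}$-coordinates in the \emph{open} window $\mc{W}$, and the condition ``$\widehat{\mc{P}^x}$ contains two points in $\mathfrak{C}(\inv{\k_\mc{P}}s)$'' is then an open condition on $x$: a small perturbation $x$ of $x_0$ in $X$ still has the two perturbed lattice points landing inside $\mathfrak{C}(\inv{\k_\mc{P}}s)^\circ$ with $\pi_{\mathrm{int}}$-image still in $\mc{W}$, and they remain visible because visibility in $\mathfrak{C}(s)$ (a thin triangle sharing only the origin) is itself stable under small perturbations once the points are bounded away from the boundary lines. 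Thus the set of such $x$ has positive $\mu$-measure, contradicting $s \le m$. Hence no such $x_0$ exists, i.e.\ $s' \le m'$ for all $s' < s$, so $s \le m'$; letting $s \uparrow m$ yields $m \le m'$.

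\textbf{Where the work is.} The main obstacle is making the ``openness of the two-point condition'' rigorous: one must check that (i) the topology on $X = \Gamma\bs\Gamma H_g$ is such that $x \mapsto$ (the relevant finite configuration of points of $\Z^n h g$ near a bounded region) varies continuously, which follows from $H_g$ acting continuously and $\mc{P}^x$ being locally defined by the lattice $\Z^n h g$ intersected with a fixed bounded cylinder; and (ii) that a pair of points witnessing $\#(\widehat{\mc{P}^{x_0}}\cap \mathfrak{C}(\inv{\k_\mc{P}}s')) \ge 2$ can be chosen \emph{strictly inside} $\mathfrak{C}(\inv{\k_\mc{P}}s)$ and with $\pi_{\mathrm{int}}$-coordinates strictly inside $\mc{W}$, which is exactly where $s' < s$ and $\mc{W}$ open are used — a boundary point of the window or of the triangle could otherwise be destroyed by an arbitrarily small perturbation. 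Visibility of each of the two points is preserved because the only way to occlude a point of $\mc{P}^x$ lying in the open triangle is by another point of $\mc{P}^x$ on the same ray strictly closer to the origin, and for nearby $x$ such an occluding point, if it existed, would have to appear out of the discrete set $\mc{P}^x$ — impossible for $x$ close enough to $x_0$ by local finiteness uniform in a neighbourhood. Once this openness is in hand, the two inequalities above close the argument.
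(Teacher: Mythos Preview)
Your overall plan matches the paper's: one direction ($m' \le m$) is trivial, and the other reduces to showing that if some $x_0 \in X$ has $\#(\widehat{\mc{P}^{x_0}}\cap \mathfrak{C}(\inv{\k_\mc{P}}s_0))\geq 2$, then the same holds for all $x'$ in a set of positive $\mu$-measure. You correctly see that openness of $\mc{W}$ and of $\mathfrak{C}(\inv{\k_\mc{P}}s_0)$ is what makes the two witnessing lattice points persist in $\mc{P}^{x'}\cap\mathfrak{C}(\inv{\k_\mc{P}}s_0)$ for $x'$ near $x_0$.

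The gap is in your visibility claim. You assert that the two perturbed points ``remain visible because \dots such an occluding point, if it existed, would have to appear out of the discrete set $\mc{P}^x$ --- impossible for $x$ close enough to $x_0$ by local finiteness.'' This is not justified: points \emph{can} enter $\mc{P}^{x'}$ under perturbation, namely lattice vectors whose $\pi_{\mathrm{int}}$-image lies just outside $\mc{W}$ at $x_0$ and slips inside $\mc{W}$ at $x'$. If such a vector happens to project onto the ray through your perturbed witness point and closer to the origin, visibility of that witness is destroyed. Nothing in your argument rules this out for the specific $x_0$ you start from (which is not generic), and in the concrete cases $H_g=g\,\SL{2}{\R}^2 g^{-1}$ relevant later in the paper, the linear $\SL{2}{\R}$-action even \emph{preserves} collinearity, so a potential occluder on the ray at $x_0$ stays on the ray for all $x'$.

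The paper sidesteps this entirely: it never claims the perturbed points themselves stay visible. It only needs that for $x'$ near $x_0$ the perturbed points lie in $\mc{P}^{x'}\cap\mathfrak{C}(\inv{\k_\mc{P}}s_0)$ and are linearly independent --- all genuinely open conditions. Then, since $\mathfrak{C}(\inv{\k_\mc{P}}s_0)$ is star-shaped with respect to the origin, on each of the two rays the closest point of $\mc{P}^{x'}$ to the origin (which is visible by definition) also lies in $\mathfrak{C}(\inv{\k_\mc{P}}s_0)$. That gives two visible points on distinct rays, hence $\#(\widehat{\mc{P}^{x'}}\cap \mathfrak{C}(\inv{\k_\mc{P}}s_0))\geq 2$. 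Replace your visibility-stability paragraph with this star-shapedness observation and the proof goes through; incidentally, the auxiliary parameter $s'<s$ is then unnecessary since both $\mc{W}$ and $\mathfrak{C}(\inv{\k_\mc{P}}s_0)$ are already open.
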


	\begin{proof}
		It suffices to show that if $\#(\widehat{\mc{P}^x}\cap \mathfrak{C}(\inv{\k_\mc{P}}s_0))\geq 2$ holds for some $x\in X$, then $\#(\widehat{\mc{P}^{x'}}\cap \mathfrak{C}(\inv{\k_\mc{P}}s_0))\geq 2$ holds for all $x'\in X$ in a set of positive measure. Write $x=\Gamma h$ for some $h\in H_g\subset \ASL{n}{\R}$. Take $n_1,n_2\in\Z^n$ so that $\pi(\delta^{1/n}n_1hg),\pi(\delta^{1/n}n_2hg)$ are linearly independent and belong to $\widehat{\mc{P}^x}\cap\mathfrak{C}(\inv{\kappa_{\mc{P}}}s_0)$. By \cite[Proposition 3.5]{marklof2014free}, we have $\overline{\pi_{\mathrm{int}}(\delta^{1/n}(\Z^nhg))}\subset \overline{\pi_\mathrm{int}(\mc{L})}$ for all $h\in H_g$. For all $x'=\Gamma h'$ with $h'\in H_g$ sufficiently close to $h$ in $\ASL{n}{\R}$ we have that $\pi(\delta^{1/n}n_1h'g),\pi(\delta^{1/n}n_2h'g)$ are linearly independent and belong to $\mc{P}^{x'}\cap\mathfrak{C}(\inv{\kappa_{\mc{P}}}s_0)$ since $\mathfrak{C}(\inv{\kappa_{\mc{P}}}s_0)$ and $\mc{W}$ are open. Since $\mathfrak{C}(\inv{\kappa_{\mc{P}}}s_0)$ is star-shaped with respect to the origin, the claim follows.
	\end{proof}
	
	Given $p_1,p_2\in\R^2$, let $\Delta(p_1,p_2)$ denote the area of the triangle with vertices $0,p_1,p_2$. In view of the $\SL{2}{\R}$-invariance of the process $x\mapsto \mc{P}^x$ we have
	\begin{equation}
	\label{eqnMPHat2}
	m_{\widehat{\mathcal{P}}}=\inf\{s>0\mid \exists x\in X,p_1,p_2\in \mc{P}^x: 0<\Delta(p_1,p_2)<s/\theta(\widehat{\mc{P}})\}
	\end{equation}
	if $\mc{W}$ is open, by \Cref{lemRewriteMPhat} and
	the fact that the area of $\mathfrak{C}(\inv{\k_\mc{P}}s)$ is $s/\theta(\widehat{\mc{P}})$.

	Next we show that the minimal gaps $\widehat{\delta}_T$ at finite horizons converge to the minimal gap under fairly general assumptions.
	
	\begin{lem}
		\label{lemDeltaConvergence}
		Let $\mc{L}=\delta^{1/n}(\Z^ng)$ for some $g\in G$.
		Suppose that $\mathcal{P}=\mc{P}(\mc{W},\mc{L})\subset \R^2$ is a cut-and-project set with $\mc{W}\subset \overline{\pi_{\mathrm{int}}(\mc{L})}$ open, $\theta(\mc{P})>0$ and $m_{\widehat{\mc{P}}}>0$. Then
		\[\lim_{T\to\infty}\widehat{\delta}_T=m_{\widehat{\mc{P}}}.\]
	\end{lem}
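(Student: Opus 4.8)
The statement asserts that the finite-horizon minimal gaps $\widehat\delta_T$ converge to $m_{\widehat{\mc P}}$ as $T\to\infty$. The natural strategy is to show separately that $\liminf_{T\to\infty}\widehat\delta_T \geq m_{\widehat{\mc P}}$ and $\limsup_{T\to\infty}\widehat\delta_T \leq m_{\widehat{\mc P}}$. The geometric interpretation behind both halves is the same: a normalised gap $\widehat d_{T,i}$ is (up to the factor $\widehat N(T)/\vol(B_T(0))\to\theta(\widehat{\mc P})$, via the density of visible points from \cite{marklof2014visibility}) essentially $\widehat N(T)$ times an angular gap, and an angular gap between two consecutive visible directions at radius $\lesssim T$ corresponds to a triangle of small area spanned by two points of $\mc P$ inside $B_T(0)$. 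More precisely, if $x,y\in\widehat{\mc P}\cap B_T(0)$ subtend angular difference $\Delta\alpha$, then the area of the triangle $0,x,y$ is $\tfrac12|x||y|\sin\Delta\alpha \leq \tfrac12 T^2\,\Delta\alpha$, so a small normalised gap forces a small-area triangle. Conversely a small-area triangle with vertices deep inside $B_T(0)$ (say in $B_{\sqrt T}(0)$) forces a small angular gap, hence a small normalised gap once $T$ is large. So the whole argument is a dictionary between \eqref{eqnMPHat2} and the definition of $\widehat\delta_T$.

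\textbf{Lower bound $\liminf \widehat\delta_T\geq m_{\widehat{\mc P}}$.} Suppose not; then along a subsequence $T_k\to\infty$ there are visible points $x_k,y_k\in\widehat{\mc P}\cap B_{T_k}(0)$ with $\widehat d_{T_k,i_k}\leq m_{\widehat{\mc P}}-\eta$ for some fixed $\eta>0$. Translating the angular-gap bound into an area bound as above, and using $\widehat N(T_k)\sim \theta(\widehat{\mc P})\,\vol(B_{T_k}(0)) = \theta(\widehat{\mc P})\pi T_k^2$, one gets a triangle $0,x_k,y_k$ of area $< (m_{\widehat{\mc P}}-\eta')/\theta(\widehat{\mc P})$ for some $\eta'>0$. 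Now I apply the "thickening/shrinking'' trick implicit in the proof of \Cref{lemRewriteMPhat}: after rescaling by $1/T_k$ (using $m_{\widehat{c\mc P}}=m_{\widehat{\mc P}}$) the pair $(x_k/T_k,y_k/T_k)$ lies in a fixed bounded region, and by the equidistribution in the space of cut-and-project sets from \cite{marklof2014free} (the Siegel-type / $\SL_2$-invariance machinery already invoked before \Cref{lemRewriteMPhat}), one can realise such a configuration in $\mc P^x$ for $x$ in a set of positive $\mu$-measure — contradicting \eqref{eqnMPHat2}, which says no such small-area pair exists at all. The one subtlety to handle carefully here is passing from visible points of $\mc P$ to points of $\mc P^x$: one should argue that if two points of $\mc P$ are close together angularly but both visible, the corresponding points in the limit process are still realised, possibly after noting that the obstruction points (points of $\mc P$ on the segments $0x_k$, $0y_k$ causing invisibility) would themselves have to appear in the limit, which can be ruled out on a full-measure set. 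This will be the main obstacle.

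\textbf{Upper bound $\limsup \widehat\delta_T\leq m_{\widehat{\mc P}}$.} Fix $s>m_{\widehat{\mc P}}$. By \eqref{eqnMPHat2} there exist $x\in X$ and $p_1,p_2\in\mc P^x$ with $0<\Delta(p_1,p_2)<s/\theta(\widehat{\mc P})$, and by openness of $\mc W$ and $\mathfrak C(\cdot)$ (as in the proof of \Cref{lemRewriteMPhat}) this happens for $x$ in an open set of positive measure, and we may take $p_1,p_2$ themselves visible in $\mc P^x$ (shrinking the configuration toward the origin removes any occluding points, since $\widehat{\mc P^x}$ near the origin is controlled). Using the $\SL_2(\mathbb R)$-invariance to normalise, and then the fact that $\mc P=\mc P^{x_0}$ for the base point $x_0$ together with equidistribution of the orbit $\Gamma\bs\Gamma\varphi_g(\SL_2(\mathbb R))$ (Ratner, as recalled in \Cref{secCalcMPhat}), one finds, for each large $T$, a rotation/scaling bringing a copy of the configuration $\{p_1,p_2\}$ into $\mc P\cap B_{\sqrt T}(0)$ — more carefully, one uses that the directions of visible points of $\mc P$ equidistribute and the fine-scale statistics converge, so the gap near the realised small-area configuration shows up as some $\widehat d_{T,i}< s$ for all large $T$. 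Hence $\widehat\delta_T<s$ eventually, giving $\limsup_{T\to\infty}\widehat\delta_T\leq s$; letting $s\downarrow m_{\widehat{\mc P}}$ finishes it. I expect the author in fact routes the upper bound through the weak convergence $F_T\to F$ already established in \cite{marklof2014visibility} together with $m_{\widehat{\mc P}}=\sup\{\sigma: F(s)=1\ \forall s\in[0,\sigma]\}$: for $s>m_{\widehat{\mc P}}$ one has $F(s)<1$, and since $F_T(s)=\mu_T([s,\infty))\to F(s)$ at continuity points, for large $T$ we get $\mu_T([0,s))>0$, i.e. some $\widehat d_{T,i}<s$, hence $\widehat\delta_T<s$ — this is cleaner and avoids re-deriving equidistribution. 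The lower bound, however, does not follow formally from weak convergence of $F_T$ (a positive mass could escape below $m_{\widehat{\mc P}}$ in the limit of $\mu_T$ only if $F(s)<1$ there, which is excluded, but one still must rule out $\widehat\delta_T$ itself dipping below while carrying vanishing mass), so the geometric argument via \eqref{eqnMPHat2} sketched above is genuinely needed, and that interplay — combinatorial gap $\leftrightarrow$ small-area triangle $\leftrightarrow$ realisability in the limit process — is where the real work lies.
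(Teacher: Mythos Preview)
Your upper bound argument is correct and matches the paper: the paper uses exactly the weak convergence $\mu_T\to\mu$ (equivalently $F_T\to F$) together with the characterisation $m_{\widehat{\mc P}}=\sup\{\sigma:F\equiv 1\text{ on }[0,\sigma]\}$ to deduce that, for every $\e>0$, the interval $[m_{\widehat{\mc P}},m_{\widehat{\mc P}}+\e)$ carries positive $\mu_T$-mass for all large $T$, so $\limsup_T\widehat\delta_T\le m_{\widehat{\mc P}}$.

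For the lower bound your geometric picture is the right one, but you are over-engineering it, and the ``main obstacle'' you flag is not there. Two points. First, \eqref{eqnMPHat2} asks only for $p_1,p_2\in\mc P^x$, \emph{not} $p_1,p_2\in\widehat{\mc P^x}$; so visibility in the limit is irrelevant and there is no occlusion subtlety to handle. Second, no rescaling, compactness, or equidistribution is needed at all: the set $\mc P$ \emph{is} $\mc P^{x_0}$ for the base point $x_0=\Gamma e\in X$. So if $x_k,y_k\in\widehat{\mc P}\cap B_{T_k}(0)$ give a normalised gap $\le m_{\widehat{\mc P}}-\eta$, then the area bound $\Delta(x_k,y_k)\le\tfrac12 T_k^2\sin(\Delta\alpha)$ combined with $\widehat N(T_k)\sim\pi\theta(\widehat{\mc P})T_k^2$ immediately gives $0<\Delta(x_k,y_k)<(m_{\widehat{\mc P}}-\eta')/\theta(\widehat{\mc P})$ for large $k$, which directly contradicts \eqref{eqnMPHat2} with $x=x_0$. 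The paper packages this as a citation to \cite[Lemma~15]{marklof2014visibility} (noting that its proof goes through when $\mc W$ is open and that the hypothesis about $0\in\mc P$ can be dropped because $\mathfrak C(\cdot)$ is star-shaped), obtaining a uniform lower bound $\widehat\xi_{T,i}-\widehat\xi_{T,i-1}\ge(\pi\theta(\widehat{\mc P}))^{-1}m_{\widehat{\mc P}}T^{-2}$ for all $i$ and all large $T$; multiplying by $\widehat N(T)$ and letting $T\to\infty$ gives $\liminf_T\widehat\delta_T\ge m_{\widehat{\mc P}}$. That cited lemma encapsulates precisely the triangle-area argument above, so your route and the paper's are the same once the unnecessary equidistribution step is removed.
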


	\begin{proof}
		From \eqref{eqnLimMinGap}, it follows that for each $\epsilon>0$ we have $\mu_T([m_{\widehat{\mc{P}}},m_{\widehat{\mc{P}}}+\e))>\delta$ for some $\delta>0$ and all $T$ large enough, i.e.\ the proportion of $\widehat{d}_{T,i}$ that are close to $m_{\widehat{\mc{P}}}$ is positive for all $T$ large enough. Thus, $\limsupl{T}\widehat{\delta}_T\leq m_{\widehat{\mc{P}}}$.
		
		For all $T>0$ large enough we have $\widehat{\xi}_{T,i}-\widehat{\xi}_{T,i-1}\geq (\pi\theta(\widehat{\mc{P}}))^{-1}m_{\widehat{\mc{P}}}T^{-2}$ for all $1\leq i\leq\widehat{N}(T)$ by a modification of \cite[Lemma 15]{marklof2014visibility}; its proof works just as well when it is assumed that $\mc{W}$ is open. Furthermore, by noting that $\mathfrak{C}(\inv{\kappa_{\mc{P}}}s_0)$ is star-shaped with respect to the origin, as in \Cref{lemRewriteMPhat}, it is seen that the assumption $0\notin \mc{P}$ or $0\in \mc{P}^x$ for all $x\in X$ can be omitted from \cite[Lemma 15]{marklof2014visibility}. Thus $\widehat{d}_{T,i}\geq \widehat{N}(T)(\pi\theta(\widehat{\mc{P}}))^{-1}m_{\widehat{\mc{P}}}T^{-2}$ for $T$ large enough, and since the right hand side converges to $m_{\widehat{\mc{P}}}$, it follows that $\liminfl{T}\widehat{\delta}_{T}\geq m_{\widehat{\mc{P}}}$.
	\end{proof}

	The following result shows that for generic translates of a Penrose set, the limiting minimal gap between visible points vanishes.
	
	\begin{prop}
		Let $\e\in \R^2$ be given and consider $\mc{P}_\e'$. For $t\in\R^2$, let $\mc{P}_{\e,t}'=t+\mc{P}_\e'$. Then $m_{\widehat{\mc{P}_{\e,t}'}}=0$ for Lebesgue-almost every $t\in\R^2$.
	\end{prop}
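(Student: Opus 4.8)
The plan is to reduce to the homogeneous‑dynamics description of the minimal gap from \Cref{secCalcMPhat} and then to exploit that a generic translation enlarges the relevant orbit closure. Since $\mc{P}_\e'=\mc{P}(\mc{W}_\e,\mc{L})$ is a regular cut‑and‑project set with $\mc{W}_\e$ open and $\mc{L}=\Z^5g$, the translate $\mc{P}_{\e,t}'=t+\mc{P}_\e'$ equals $\mc{P}(\mc{W}_\e,\mc{L}+(t,0))=\mc{P}(\mc{W}_\e,\Z^5g_t)$ with $g_t=(g,(t,0))\in G=\ASL{5}{\R}$, and it is again a regular cut‑and‑project set with the same open window $\mc{W}_\e\subset\overline{\pi_{\mathrm{int}}(\Z^5g_t)}=\R^2\times\tfrac{1}{\sqrt5}\Z$ and with $\theta(\mc{P}_{\e,t}')=\theta(\mc{P}_\e')>0$. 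Hence \Cref{lemRewriteMPhat} applies, and the ensuing identity \eqref{eqnMPHat2} holds: writing $X_t=\Gamma\bs\Gamma H_{g_t}$ and $\mc{P}_t^x=\mc{P}(\mc{W}_\e,\Z^5hg_t)$ for $x=\Gamma h$,
\[m_{\widehat{\mc{P}_{\e,t}'}}=\inf\Bigl\{s>0\ \Big|\ \exists\,x\in X_t,\ p_1,p_2\in\mc{P}_t^x:\ 0<\Delta(p_1,p_2)<s/\theta(\widehat{\mc{P}_{\e,t}'})\Bigr\}.\]
As $m_{\widehat{\mc{P}_{\e,t}'}}\ge0$ always, it therefore suffices to prove that for Lebesgue‑a.e.\ $t$ and every $\eta>0$ some member of the process $\{\mc{P}_t^x:x\in X_t\}$ contains linearly independent points $p_1,p_2$ with $0<\Delta(p_1,p_2)<\eta$.

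The point is to understand how $t$ enters $X_t$. A short computation shows that $\varphi_{g_t}(\SL{2}{\R})=\bigl\{\bigl(g\,\mathrm{diag}(A,I_3)\,g^{-1},\,(t(A-I_2),0)g^{-1}\bigr):A\in\SL{2}{\R}\bigr\}$ is conjugate, via a translation, to the untwisted $\varphi_g(\SL{2}{\R})=\{(g\,\mathrm{diag}(A,I_3)\,g^{-1},0)\}$ (the cocycle in the physical direction is a coboundary); hence $X_t$ is a fixed right‑translate of the $\varphi_g(\SL{2}{\R})$‑orbit closure of the point $\Gamma\cdot(I,(t,0)g^{-1})$ in $\Gamma\bs G$. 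So the almost‑everywhere qualifier is essential: at $t=0$ this collapses to the homogeneous space attached to the untranslated Penrose set, whose minimal gap is positive. By Ratner's orbit‑closure theorem the orbit closure has the form $\Gamma\bs\Gamma p_t\Lambda_t$ with $\varphi_g(\SL{2}{\R})\subseteq\Lambda_t$, and the \emph{crux} of the proof is that for Lebesgue‑a.e.\ $t$ the group $\Lambda_t$ is maximal, hence strictly larger than $\varphi_{g_t}(\SL{2}{\R})$ — in fact large enough that it contains a subgroup of translations $(I,w)$ whose images $wg$ have internal component in $\tfrac{1}{\sqrt5}\Z$ and physical component $\pi(wg)$ dense in $\R^2$. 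For such $t$, taking $x=\Gamma(I,w)$ in $X_t$ with $w$ in this translation subgroup yields $\mc{P}_t^x=t'+\mc{P}_\e'$, and as $w$ varies $t'$ ranges over a set $D$ dense in $\R^2$.

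Granting this, the remaining step is elementary. Since $\mc{P}_\e'$ is relatively dense in $\R^2$ it is not contained in a line, so we may fix $p_1,p_2\in\mc{P}_\e'$ not collinear with the origin. For $t'\in\R^2$ put $\phi(t')=\Im\bigl(\overline{(t'+p_1)}(t'+p_2)\bigr)$, so that $|\phi(t')|=2\Delta(t'+p_1,t'+p_2)$; this is a non‑constant real‑affine function of $t'$ with Lipschitz constant $|p_2-p_1|$, and $\phi(t')=0$ precisely when $0,t'+p_1,t'+p_2$ are collinear, so its zero set $\ell^\ast$ is a line. Given $\eta>0$, using that $D$ is dense and $\ell^\ast$ is nowhere dense we pick $t'\in D\setminus\ell^\ast$ within distance $\eta/(2|p_2-p_1|)$ of $\ell^\ast$; then $p_i':=t'+p_i$ lie in the process member $t'+\mc{P}_\e'=\mc{P}_t^x$, are linearly independent (as $\phi(t')\ne0$), and satisfy $0<\Delta(p_1',p_2')=\tfrac12|\phi(t')|<\eta$. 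This establishes the reduced claim, whence $m_{\widehat{\mc{P}_{\e,t}'}}=0$ for a.e.\ $t$.

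The main obstacle is the middle paragraph: proving that for Lebesgue‑a.e.\ $t$ the orbit closure $X_t$ is large in the stated sense, i.e.\ that the exceptional $t$ — those for which $\Gamma\cdot(I,(t,0)g^{-1})\varphi_g(\SL{2}{\R})$ has a degenerate closure — form a Lebesgue‑null set. I would expect this to be extractable from the a.e.\ equidistribution‑of‑translates results of Marklof and Strömbergsson that already underlie \Cref{secCalcMPhat}, combined with the fact that $t\mapsto\Gamma\cdot(I,(t,0)g^{-1})$ is a smooth embedding of $\R^2$ transverse to $\varphi_g(\SL{2}{\R})$. The ancillary points — that $0\notin\mc{P}_{\e,t}'$ for a.e.\ $t$, so that the above is non‑vacuous, and the explicit identification of which translates occur in the process from the structure of $g$ and $\Z^5$ — are routine verifications.
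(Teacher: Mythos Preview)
Your overall strategy matches the paper's, and your reduction via \Cref{lemRewriteMPhat} and \eqref{eqnMPHat2} is correct. The gap is exactly the one you yourself flag: you do not prove that the Ratner group $H_{g_t}$ is large for Lebesgue-almost every $t$. The paper resolves this in one stroke by invoking \cite[Prop.~4.5]{marklof2014free}, which gives $H_{g_t}=\widetilde{H}_g$ for Lebesgue-a.e.\ $t\in\R^2$, together with the explicit description from \cite[Section~2.5]{marklof2014free}:
\[\widetilde{H}_g=g\left\{\left.\left(\begin{pmatrix}A_1&0&0\\0&A_2&0\\0&0&1\end{pmatrix},(v_1,v_2,0)\right)\right|(A_1,v_1),(A_2,v_2)\in\ASL{2}{\R}\right\}\inv{g}.\]
No separate transversality or degenerate-orbit-closure analysis is needed beyond this citation; your speculative paragraph about coboundaries and smooth embeddings is therefore unnecessary.

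Once $H_{g_t}=\widetilde{H}_g$ is known, the paper's endgame is more direct than yours. You use only the translation subgroup (elements $(I,w)$ with $wg=(v_1,0,0)$) to produce process members $t'+\mc{P}_\e'$ --- note that with the explicit $\widetilde{H}_g$ above your set $D$ is in fact all of $\R^2$, not merely dense --- and then run an affine/Lipschitz argument on $\phi(t')$ to make $\Delta(t'+p_1,t'+p_2)$ small but nonzero. The paper instead uses the full $\ASL{2}{\R}$-factor on the physical coordinates: fix $y_1,y_2\in\mc{L}$ with $\pi(y_i)$ linearly independent and $\pi_{\mathrm{int}}(y_i)\in\mc{W}_\e$, pick any linearly independent $v_1,v_2\in\mathfrak{C}(\inv{\kappa_{\mc{P}_{\e,t}}}\sigma_0)$, and use the transitivity of $\ASL{2}{\R}$ on pairs of distinct vectors to find $(A,v)$ with $\pi(y_i)A+v=v_i$; the corresponding $h\in\widetilde{H}_g$ places $v_1,v_2$ directly into $(\mc{P}_{\e,t}')^x\cap\mathfrak{C}(\inv{\kappa_{\mc{P}_{\e,t}}}\sigma_0)$, and openness then gives a set of positive measure. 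Both completions are valid, but the transitivity route is shorter and bypasses the auxiliary line $\ell^\ast$ and the Lipschitz estimate.
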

	
	\begin{proof}
		Recall the definition of $\mc{P}_\e'=\mathcal{P}(\mc{W}_\e,\mc{L})$ in \eqref{eqnDefPenroseEps'}, where $\mc{L}=\Z^5 g$. For $t\in\R^2$ we have $\mc{P}_{\e,t}'=\mathcal{P}(\mc{W}_\e,\Z^5g_t)$ with $g_t:=(g,(t,0))\in\ASL{5}{\R}$. By \cite[Prop. 4.5]{marklof2014free}, we have $H_{g_t}=\widetilde{H}_g$ for Lebesgue-almost all $t\in \R^2$. From \cite[Section 2.5]{marklof2014free} we have 
		\[\widetilde{H}_g=g\left\{\left.\left(\begin{pmatrix}
		A_1 & 0 & 0\\
		0 & A_2 & 0\\
		0 & 0 & 1
		\end{pmatrix},(v_1,v_2,0)\right)\right|(A_1,v_1),(A_2,v_2)\in \ASL{2}{\R}\right\}\inv{g}\subset \ASL{5}{\R}.\]
		We now show that for every $t$ with $H_{g_t}=\widetilde{H}_g$ we have $m_{\widehat{\mc{P}_{\e,t}'}}=0$.
		
		Fix $y_1,y_2\in\mc{L}$ such that $\pi(y_1),\pi(y_2)$ are linearly independent and $\pi_{\mathrm{int}}(y_1),\pi_{\mathrm{int}}(y_2)\in \mc{W}_\epsilon$. Let $\sigma_0>0$ be arbitrary and fix $v_1,v_2\in\mathfrak{C}(\inv{\kappa_{\mc{P}_{\e,t}}}\sigma_0)$ which are linearly independent. Since $\ASL{2}{\R}$ acts transitively on pairs of distinct vectors of $\R^2$, there is $(A,v)\in\ASL{2}{\R}$ with $\pi(y_i)A+v=v_i$ for $i\in\{1,2\}$. Let \[h=g\left(\begin{pmatrix}
		A & 0 & 0\\
		0 & I_2 & 0\\
		0 & 0 & 1
		\end{pmatrix},(v-t,0,0)\right)\inv{g}\in \widetilde{H}_g\]
		and $x=\Gamma h$. Then $(v_i,\pi_{\mathrm{int}}(y_i))\in \Z^5hg_t$ and so
		$(\mc{P}_{\e,t}')^x=\mc{P}(\mc{W}_\e,\Z^5hg_t)$ intersects $\mathfrak{C}(\inv{\kappa_{\mc{P}_{\e,t}}}\sigma_0)$ in at least two points which does not lie on the same line through the origin. Hence $\#((\widehat{\mc{P}_{\e,t}')^x}\cap \mathfrak{C}(\inv{\kappa_{\mc{P}_{\e,t}}}\sigma_0))\geq 2$ and since $\mathcal{W}_\e\subset \overline{\pi_{\mathrm{int}}(\mc{L})}$ is open and $\mathfrak{C}(\inv{\kappa_{\mc{P}_{\e,t}}}\sigma_0)$ is open this holds for all $x'=\Gamma h'$ with $h'$ close to $h$ in $\ASL{5}{\R}$. Since $\sigma_0$ was arbitrary, we conclude that $m_{\widehat{\mc{P}_{\e,t}'}}=0$ by \eqref{defnMPhat2}.
	\end{proof}

	In an analogous manner, considering \eqref{eqnACPS}, \eqref{eqnTCPS} and \cite[Section 2.2]{marklof2014free}, it follows that for Lebesgue-almost all translates of an $\mc{A}$-set or $\mc{T}$-set, the limiting minimal gap between visible points is $0$ since in these cases the $H_{g_t}$ is  equal to
	\[\widetilde{H}_g=g\ASL{2}{\R}^2\inv{g}\]
	for a generic translate $t$,
	where $\mc{L}=\delta^{1/4}\Z^4g$ for some $g\in \SL{4}{\R}$.

	\subsection{On $m_{\widehat{\mc{P}}}$ for $\mathcal{A}$-sets} 
	
	Let $\mathcal{W}\subset \R^2$ be a Jordan measurable, open, convex set which contains the origin and consider $\mc{A}_\mc{W}$. Let $\mc{A}=\mc{A}_\mc{W}\inv{A_1}$, with $A_1$ as in \eqref{eqnACPS}. Since $\det(A_1)>0$ we have $m_{\widehat{\mc{A}_\mc{W}}}=m_{\widehat{\mc{A}}}$, as noted in the previous section. Note that $\mc{A}=\mc{P}(\mc{W}\inv{A},\mc{L})$, where $\det(A)=\frac{1}{\sqrt{2}}$ and $\mathcal{L}$ is the Minkowski embedding of $\Z[\sqrt{2}]^2$ in $\R^4$. Pick $\delta>0$ and $g\in \SL{4}{\R}$ so that $\mc{L}=\delta^{1/4}\Z^4g$. 
	
	By \cite[(2.6)]{marklof2014free}, we have $H_g=g\SL{2}{\R}^2\inv{g}$, where $\SL{2}{\R}^2$ is the image of the map 
	\[\iota:\SL{2}{\R}\times \SL{2}{\R}\longrightarrow\SL{4}{\R},\quad (B_1,B_2)\mapsto \begin{pmatrix}B_1 & 0\\ 0 & B_2
	\end{pmatrix}.\] Fix $B_1,B_2\in\SL{2}{\R}$ and let $b=\iota(B_1,B_2)$. If $x=\Gamma g\iota(B_1,B_2)\inv{g}$ then
	\[\mathcal{A}^x=\mathcal{P}(\mc{W}\inv{A},\mc{L}b)=\{xB_1\mid x\in\Z[\sqrt{2}]^2, \sigma(x)B_2\in \mc{W}\inv{A}\}.\]
	
	For convex sets $C_1,C_2\subset \R^2$ containing $0$ let \[T_{C_1,C_2}=\sup_{x\in C_1,y\in C_2}\Delta(x,y).\]
	Let also $T_{C_1}=T_{C_1,C_1}$.
	By \eqref{eqnMPHat2} it follows that
	\begin{align*}
	m_{\widehat{\mc{A}}}&=\inf\left\{s>0\left| \begin{matrix}\exists B_1,B_2\in \SL{2}{\R},x,y\in \Z[\sqrt{2}]^2:  \\  0<\Delta(x,y),\quad xB_1,yB_1\in\mathfrak{C}(\inv{\kappa_\mathcal{P}}s),\quad\sigma(x)B_2,\sigma(y)B_2\in\mc{W}\inv{A}\end{matrix}\right.\right\}\\
	&=\inf\{s>0\mid \exists x,y\in \Z[\sqrt{2}]^2:0<\Delta(x,y)<s/\theta(\widehat{\mathcal{A}}),\quad \Delta(\sigma(x),\sigma(y))<T_{\mathcal{W}\inv{A}}\}\\
	&=\inf\left\{\tfrac{1}{2}\theta(\widehat{\mathcal{A}})|x|:x\in\Z[\sqrt{2}]\smpt{0},\quad|\sigma(x)|<2\sqrt{2}T_\mathcal{W}\right\},
	\end{align*}
	where we make use of the fact that if $x=(x_1,x_2),y=(y_1,y_2)\in\Z[\sqrt{2}]^2$, then $\Delta(x,y)=\frac{1}{2}|x_1y_2-x_2y_1|$ and $\Delta(\sigma(x),\sigma(y))=\frac{1}{2}|\sigma(x_1y_2-x_2y_1)|$.
	Thus, to determine $m_{\widehat{\mc{A}}}$, one must solve a problem of the following type: find the infimum of $|x|$ over non-zero $x\in\Z[\sqrt{2}]$ subject to $|\sigma(x)|<c$ for some fixed $c>0$. This amounts to finding a minimum among finitely many possible values of $|x|$ since $\{(x,\sigma(x))\mid x\in\Z[\sqrt{2}]\}$ is a lattice in $\R^2$. Note that by \Cref{lemSizeOfPrimesA}, this minimum has to be a unit. Indeed, take a non-zero, non-unit $x\in\Z[\sqrt{2}]$ with $|\sigma(x)|<c$ and take a prime $\pi\in\bb{P}$ with $\pi\mid x$. Then, $|x/\pi|<|x|$ and $|\sigma(x/\pi)|\leq\frac{|\sigma(x)|}{\sqrt{2}}<c$, so $x$ cannot be the desired minimum. It follows that the minimum is given by $\lambda^{-m}$, where $\lambda=1+\sqrt{2}$ is the fundamental unit of $\Z[\sqrt{2}]$ and $m$ is the maximal integer such that $\lambda^m<c$. We thus have the following result.
	
	\begin{thm}
		\label{thmmPhatA}
		Let $\mc{W}\subset\R^2$ be an open, convex Jordan measurable set containing the origin. Then
		\[m_{\widehat{\mc{A}_\mc{W}}}=\frac{\lambda^{-m}\theta(\widehat{\mathcal{A}_\mathcal{W}})}{2\sqrt{2}}\]
		where $m$ is the maximal integer such that $\lambda^m<2\sqrt{2}T_\mathcal{W}$.
	\end{thm}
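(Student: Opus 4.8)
The bulk of the argument is laid out in the discussion immediately preceding the statement; what a written proof has to add is the one geometric input used there, after which the chain of reductions assembles into the formula. The plan is to reduce $m_{\widehat{\mc{A}_\mc{W}}}$ to a norm‑constrained minimization over $\Z[\sqrt{2}]$ and to solve that minimization with \Cref{lemSizeOfPrimesA}. First I would use the invariance of $m_{\widehat{\cdot}}$ under $\GL{2}{\R}$‑maps of positive determinant (established in this section) to replace $\mc{A}_\mc{W}$ by $\mc{A}:=\mc{A}_\mc{W}\inv{A_1}=\mc{P}(\mc{W}\inv{A},\mc{L})$, where $\det A=\tfrac{1}{\sqrt{2}}$ and $\mc{L}$ is the Minkowski embedding of $\Z[\sqrt{2}]^2$. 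Writing $\mc{L}=\delta^{1/4}\Z^4g$ and using $H_g=g\,\SL{2}{\R}^2\inv{g}$ from \cite[(2.6)]{marklof2014free}, the point process consists of the sets $\mc{A}^x=\{vB_1\mid v\in\Z[\sqrt{2}]^2,\ \sigma(v)B_2\in\mc{W}\inv{A}\}$ with $x=\Gamma g\iota(B_1,B_2)\inv{g}$. Since $\mc{W}\inv{A}$ is open (as $\mc{W}$ is) and $\theta(\mc{A})>0$, \eqref{eqnMPHat2} applies and expresses $m_{\widehat{\mc{A}}}$ as the infimum of all $s>0$ for which some $x\in X$ admits $p_1,p_2\in\mc{A}^x$ with $0<\Delta(p_1,p_2)<s/\theta(\widehat{\mc{A}})$.

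The substantive step is to eliminate the two $\SL{2}{\R}$ factors. The factor $B_1$ drops out since $\Delta(vB_1,wB_1)=\Delta(v,w)$. For $B_2$ I would isolate the lemma: for linearly independent $p,q\in\R^2$ and a bounded open convex $C\subset\R^2$ with $0\in C$, there is $B\in\SL{2}{\R}$ with $pB,qB\in C$ iff $\Delta(p,q)<T_C$. The ``only if'' direction is immediate from $\Delta(pB,qB)=\Delta(p,q)\le T_C$ together with the observation that at an interior pair with nonzero $\Delta$ the value can always be strictly increased by an interior perturbation (it is the absolute value of an affine function of each argument), so equality with $T_C$ is impossible; for the ``if'' direction the image of $\Delta$ on $C\times C$ is the interval $[0,T_C)$, so one picks $a,b\in C$ with $\Delta(a,b)=\Delta(p,q)$ and, after swapping $a,b$ if needed to match the sign of the relevant $2\times 2$ determinant, solves $pB=a$, $qB=b$ with $B\in\SL{2}{\R}$. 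Applying this with $C=\mc{W}\inv{A}$ turns the infimum into
\[m_{\widehat{\mc{A}}}=\inf\{s>0\mid \exists\,v,w\in\Z[\sqrt{2}]^2:\ 0<\Delta(v,w)<s/\theta(\widehat{\mc{A}}),\ \Delta(\sigma(v),\sigma(w))<T_{\mc{W}\inv{A}}\}.\]

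Setting $z=v_1w_2-v_2w_1$ one has $\Delta(v,w)=\tfrac12|z|$ and $\Delta(\sigma(v),\sigma(w))=\tfrac12|\sigma(z)|$, and $z$ runs over all of $\Z[\sqrt{2}]\smpt{0}$ as $(v,w)$ runs over pairs with $\Delta(v,w)>0$ (take $v=(z,0)$, $w=(0,1)$); together with $T_{\mc{W}\inv{A}}=\sqrt{2}\,T_\mc{W}$ (from $\det\inv A=\sqrt{2}$) this gives $m_{\widehat{\mc{A}}}=\inf\{\tfrac12\theta(\widehat{\mc{A}})|z|\mid z\in\Z[\sqrt{2}]\smpt{0},\ |\sigma(z)|<2\sqrt{2}\,T_\mc{W}\}$. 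Now $\{(z,\sigma(z))\mid z\in\Z[\sqrt{2}]\}$ is a lattice in $\R^2$, so the infimum is attained; by \Cref{lemSizeOfPrimesA}, if an admissible $z$ is divisible by a prime $\pi\in\bb{P}$ then $z/\pi$ is still admissible ($|\sigma(\pi)|\ge\sqrt{2}>1$) and has smaller $|z|$, so a minimizer is a unit $\pm\lambda^{j}$; using $\sigma(\lambda)=-\inv\lambda$ its admissibility reads $\lambda^{-j}<2\sqrt{2}\,T_\mc{W}$, so minimizing $\lambda^{j}$ forces $j=-m$ with $m$ the largest integer satisfying $\lambda^{m}<2\sqrt{2}\,T_\mc{W}$. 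Hence the minimal admissible $|z|$ is $\lambda^{-m}$, and with $\theta(\widehat{\mc{A}})=\det(A_1)\theta(\widehat{\mc{A}_\mc{W}})=\tfrac{1}{\sqrt{2}}\theta(\widehat{\mc{A}_\mc{W}})$ (\Cref{lemDensitiesGeneral}) we obtain $m_{\widehat{\mc{A}_\mc{W}}}=m_{\widehat{\mc{A}}}=\tfrac12\lambda^{-m}\theta(\widehat{\mc{A}})=\frac{\lambda^{-m}\theta(\widehat{\mc{A}_\mc{W}})}{2\sqrt{2}}$.

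The one place requiring genuine care is the geometric lemma eliminating $B_2$ — in particular the strictness of the inequality and the sign of the determinant when solving $pB=a$, $qB=b$, which is precisely where openness and convexity of $\mc{W}$ enter; everything else is determinant bookkeeping and the short divisibility argument built on \Cref{lemSizeOfPrimesA}.
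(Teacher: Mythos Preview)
Your proposal is correct and follows the same route as the paper: pass to $\mc{A}=\mc{A}_\mc{W}\inv{A_1}$, use $H_g=g\,\SL{2}{\R}^2\inv{g}$ together with \eqref{eqnMPHat2} to eliminate the two $\SL{2}{\R}$ factors, reduce to the constrained minimization over $\Z[\sqrt{2}]$, and finish with the divisibility argument from \Cref{lemSizeOfPrimesA}. The only substantive addition is that you spell out the geometric lemma justifying the elimination of $B_2$ (and the final density conversion $\theta(\widehat{\mc{A}})=\tfrac{1}{\sqrt{2}}\theta(\widehat{\mc{A}_\mc{W}})$), both of which the paper asserts without proof; these are exactly the right details to fill in.
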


	By observing that if $x=x_1+x_2\z$, $y=y_1+y_2\z$ for $x_1,x_2,y_1,y_2\in\Z[\sqrt{2}]$, then $\Delta(x,y)=\frac{1}{2\sqrt{2}}|x_1y_2-x_2y_1|$ and $\Delta(\sigma(x),\sigma(y))=\frac{1}{2\sqrt{2}}|\sigma(x_1y_2-x_2y_1)|$ it follows that $\frac{\lambda^{-m}}{2\sqrt{2}}$, with $m$ as in \Cref{thmmPhatA}, is equal to  $\min \Delta(\mc{A}_\mc{W})$ where
	\begin{equation}
		\label{eqnDeltaA}
		\Delta(\mc{A}_\mc{W}):=\{\Delta(x,y)\mid x,y\in\Z[\z],x\neq y, \Delta(\sigma(x),\sigma(y))<T_\mc{W}\}.
	\end{equation}
	Furthermore, for each $c>0$ we have $\#(\Delta(\mc{A}_\mc{W})\cap (0,c))<\infty$.
 	
 	\Cref{thmmPhatA} allows us to explicitly calculate $m_{\widehat{\mc{A}_\mc{W}}}$ for a large family of $\mathcal{W}\subset\R^2$; in particular for all $\mathcal{W}$ which are Jordan measurable, open, convex, contain the origin and satisfy $-\mc{W}\subset \sqrt{2}\mathcal{W}$, since then $\theta(\widehat{\mathcal{A}_\mathcal{W}})$ is known explicitly by \Cref{thmDensVisA}.
	
	When $\mathcal{W}$ is the open regular octagon of side length $1$ centered at the origin with sides perpendicularly bisected by the coordinate axes, i.e.\ when $\mathcal{A}_\mathcal{W}$ is the Ammann--Beenker point set, we have $T_\mathcal{W}=\frac{2+\sqrt{2}}{4}$ since the outer radius of $\mathcal{W}$ is $\sqrt{\frac{2+\sqrt{2}}{2}}$. The maximal $m$ with $\lambda^m<2\sqrt{2}T_\mc{W}=1+\sqrt{2}$ is $0$. Recall from \Cref{thmDensVisA} that $\theta(\widehat{\mc{A}_\mc{W}})=\frac{1}{\z_{\Q(\sqrt{2})}(2)}$ and therefore 
	\begin{equation*}
		\label{mPHatAmmann-Beenker}
		m_{\widehat{\mc{A}_\mc{W}}}=\frac{1}{2\sqrt{2}\z_{\Q(\sqrt{2})}(2)}=\frac{24}{\pi^4}=0.2463\ldots,
	\end{equation*}
	by \Cref{thmmPhatA}.
	
	Next, some numerical support for this result is presented. Given $T>0$, recall that $\widehat{N}(T)=\#(B_T(0)\cap\widehat{\mathcal{A}_\mathcal{W}})$. Recall the definition of $\widehat{\xi}_{T,i}$ for $0\leq i\leq \widehat{N}(T)$ in \eqref{eqnXiHat}. Finally recall the normalised gaps given by $\widehat{d}_i=\widehat{N}(T)(\widehat{\xi}_{T,i}-\widehat{\xi}_{T,i-1})$ and $\widehat{\delta}_T=\underset{1\leq i \leq\widehat{N}(T)}{\min}\widehat{d}_{T,i}$.
	
	\begin{figure}[H]
	\centering
	\includegraphics[width=0.6\linewidth]{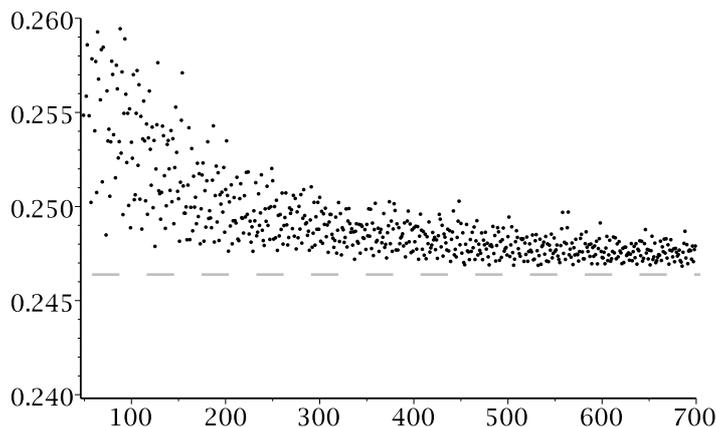}
	\caption{Plot of $(T,\widehat{\delta_T})$ for $T\in\{50,51,\ldots,700\}$, for the Ammann--Beenker point set $\mathcal{A}_\mathcal{W}$. The horizontal dashed line is $y=m_{\widehat{\mc{A}_\mc{W}}}=\frac{24}{\pi^4}$.}
	\label{fig:ABgapGraph700}
	\end{figure}

	\Cref{fig:ABgapGraph700} was produced by generating the $134\,091$ points of $B_{700}(0)\cap\widehat{\mathcal{A}_\mathcal{W}}$ in the closed octant $0\leq y\leq x$ (recall that the Ammann--Beenker point set exhibits eightfold rotational symmetry about the origin) and then calculating $\widehat{\delta}_T$ for $T\in\{50,51,\ldots,700\}$.
	
	\subsection{On $m_{\widehat{\mc{P}}}$ for $\mathcal{P}$-sets}
		
	Let $\z=e^{\frac{2\pi i}{5}}$ and let $\sigma$ be the automorphism of $\Q(\z)$ given by $\z\mapsto \z^2$. Recall the definition of $\mathcal{P}_\epsilon$ in \eqref{eqnDefPenroseEps} and the definition of $\mc{P}_\e'=\mathcal{P}(\mc{W}_\e,\mc{L})$ in \eqref{eqnDefPenroseEps'}. Recall in particular that $\mc{L}=\Z^5g$, where $g\in \mathrm{SO}(5,\R)$ is the matrix whose $(j+1)$-th row is given by $v_j=\sqrt{\frac{2}{5}}(\cos(\frac{2\pi j}{5}),\sin(\frac{2\pi j}{5}),\cos(\frac{4\pi j}{5}),\sin(\frac{4\pi j}{5}),\frac{1}{\sqrt{2}})$. In this section we will give a formula for $m_{\widehat{\mc{P}_\e}}=m_{\widehat{\mc{P}_\e'}}$ for all $\e\in\C$ with $|\e|<0.1$.
	
	By \cite[Section 2.5]{marklof2014free} we have that $H_g=gH\inv{g}$, where
	\[H=\left\{\left.\begin{pmatrix}
	A_1 & 0 & 0\\ 0 & A_2 & 0\\ 0 & 0 & 1
	\end{pmatrix}\right|A_1,A_2\in\SL{2}{\bb{R}}\right\}.\]
	Henceforth we assume that $|\e|<0.1$, so that $\mathcal{W}_{k,\e}$ contains the origin and $\theta(\widehat{\mc{P}_e})$ can be calculated by \Cref{thmDenVisPenrose}. 
	From the structure of $H_g$ and \eqref{eqnMPHat1}, it then follows that
	\begin{align*}m_{\widehat{\mathcal{P}_\e'}}&=\inf\left\{s>0\left| \begin{matrix}\exists k_1,k_2\in\bb{Z}^5, A_1,A_2\in\SL{2}{\R}: \\  \pi(k_1g)A_1,\pi(k_2g)A_1\in\mathfrak{C}(\inv{\kappa_\mathcal{P}}s)\text{ linearly independent},\\\pi_{\mathrm{int}}(k_1g)\begin{pmatrix}
	A_2 & 0\\ 0 & 1
	\end{pmatrix}, \pi_{\mathrm{int}}(k_2g)\begin{pmatrix}
	A_2 & 0\\ 0 & 1
	\end{pmatrix}\in\mathcal{W}_\e\end{matrix}\right.\right\}\\
	&=\min_{1\leq j_1,j_2\leq 4}\inf\left\{s>0\left| \begin{matrix}\exists k_1,k_2\in\bb{Z}^5, \sum_{j=0}^4k_{1,j}=j_1,\sum_{j=0}^4k_{2,j}=j_2: \\  0<\Delta(\pi(k_1g),\pi(k_2g))<\frac{s}{\theta(\widehat{\mc{P}_\e'})}\vspace{1mm}\\\Delta(\pi_{\mathrm{int}}'(k_1g),\pi_{\mathrm{int}}'(k_2g))<\frac{2T_{j_1,j_2}}{5}\end{matrix}\right.\right\},\end{align*}
	where $\pi_{\mathrm{int}}'(x_1,\ldots,x_5)=(x_3,x_4)$. Let $m_{j_1,j_2}$ be the infimum corresponding to $j_1,j_2$ in the last expression, so that $m_{\widehat{\mathcal{P}_\e'}}=\underset{1\leq j_1,j_2\leq 4}{\min}m_{j_1,j_2}$. We can identify $\sqrt{5/2}\pi(kg)$ with $x:=\sum_{j=0}^4k_j\z^j\in\Z[\z]$ and $\sqrt{5/2}\pi_{\mathrm{int}}'(kg)$ with $\sigma(x)$ for every $k\in\Z^5$. Given $x,y\in\Z[\z]$, write $x=x_1+x_2\z$, $y=y_1+y_2\z$ for some $x_1,x_2,y_1,y_2\in\Z[\tau]$. It is then straightforward to show that \[\Delta(x,y)=\tfrac{\sqrt{\t+2}}{4}|x_1y_2-x_2y_1| \quad\text{ and }\quad \Delta(\sigma(x),\sigma(y))=\tfrac{\sqrt{\t+2}}{4\t}|\sigma(x_1y_2-x_2y_1)|.\] It follows that \[m_{j_1,j_2}\geq d_{j_1,j_2}:=\inf\left\{s>0\left|\exists x\in\Z[\tau]:0<|x|<\frac{10s}{\sqrt{\t+2}\theta(\widehat{\mc{P}_\e'})},\quad|\sigma(x)|<\frac{4\t T_{j_1,j_2}}{\sqrt{\t+2}}\right.\right\}.\]
	Note that to determine $d_{j_1,j_2}$ one must solve a problem of the following type: find the infimum of $0<|x|$ over non-zero $x\in\Z[\t]$ subject to $|\sigma(x)|<c$ for some $c>0$. By reasoning as before \Cref{thmmPhatA}, it follows that the infimum is a minimum and must be a unit.
	
	From the definition of $d_{j_1,j_2}$ it is seen that $d_{j_1,j_2}$ is minimal when $T_{j_1,j_2}$ is maximal, in which case $j_1,j_2\in\{2,3\}$. Fix such $j_1,j_2$. Suppose $x=\tau^m$ with $m\in\Z$ gives the minimal $|x|$ subject to $|\sigma(x)|<\frac{4\t T_{j_1,j_2}}{\sqrt{\t+2}}$. If $j_1=2$ let $x=\zeta^2+\zeta^3=-\t$ and $y=y_1-\t^{m-1}\z$ where $y_1\in\Z[\t]$ is chosen so that $y=\sum_{j=0}^4k_j\z^j$ with $\sum_{j=0}^4k_j=j_1$. If $j_2=3$ let $x=\t$ and $y=y_1+\t^{m-1}\z$ where $y_1\in\Z[\t]$ is chosen so that $y=\sum_{j=0}^4k_j\z^j$ with $\sum_{j=0}^4k_j=j_2$. This shows that $m_{j_1,j_2}=d_{j_1,j_2}$ and hence we have the following result.
		
	\begin{thm}
		\label{thmMPHatPenrose}
		For $\e\in\C$ with $|\e|<0.1$ we have
		\[m_{\widehat{\mc{P}_\e}}=\frac{\t^{-m}\sqrt{\t+2}\,\theta(\widehat{\mc{P}_\e})}{4},\]
		where $m$ is the maximal integer such that $\t^m<\frac{4\t}{\sqrt{\t+2}}\underset{{2\leq j_1,j_2\leq 3}}{\max}T_{j_1,j_2}$.
	\end{thm}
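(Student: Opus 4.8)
The plan is to collect the reductions already made in the discussion preceding the statement into a proof, the only part requiring genuine work being the verification that each lower bound $m_{j_1,j_2}\geq d_{j_1,j_2}$ becomes an equality at the extremal pair $(j_1,j_2)$.

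First I would record that $m_{\widehat{\mc{P}_\e}}=m_{\widehat{\mc{P}_\e'}}$, since $\mc{P}_\e'=\sqrt{2/5}\,\mc{P}_\e$ and positive scaling leaves the angular data, hence the limiting gap distribution, unchanged; and $\mc{P}_\e'=\mc{P}(\mc{W}_\e,\mc{L})$ is a regular cut-and-project set with open window, so \Cref{lemRewriteMPhat} and \eqref{eqnMPHat2} apply. Inserting the block structure $H_g=gH\inv{g}$ with $H\cong\SL{2}{\R}\times\SL{2}{\R}$ — the two blocks acting on the physical plane and on the first two internal coordinates, the last internal coordinate being fixed — splits the infimum according to the two $\kappa$-classes $\sum_j k_{i,j}\in\{1,2,3,4\}$, giving $m_{\widehat{\mc{P}_\e'}}=\min_{1\leq j_1,j_2\leq 4}m_{j_1,j_2}$. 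Identifying $\sqrt{5/2}\,\pi(kg)$ with $x=\sum_j k_j\z^j\in\Z[\z]$ and $\sqrt{5/2}\,\pi_{\mathrm{int}}'(kg)$ with $\sigma(x)$, and using the wedge identities $\Delta(x,y)=\tfrac{\sqrt{\t+2}}{4}|x_1y_2-x_2y_1|$ and $\Delta(\sigma(x),\sigma(y))=\tfrac{\sqrt{\t+2}}{4\t}|\sigma(x_1y_2-x_2y_1)|$ (where $x=x_1+x_2\z$, $y=y_1+y_2\z$ with $x_i,y_i\in\Z[\t]$), one sees that $x_1y_2-x_2y_1$ ranges over all of $\Z[\t]$, so $m_{j_1,j_2}\geq d_{j_1,j_2}$ with $d_{j_1,j_2}$ as defined above.

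Next I would argue that the infimum defining $d_{j_1,j_2}$ is a minimum attained at a unit. Since $\{(x,\sigma(x))\mid x\in\Z[\t]\}$ is a lattice in $\R^2$ and the constraint $|\sigma(x)|<\tfrac{4\t T_{j_1,j_2}}{\sqrt{\t+2}}$ confines $x$ to a bounded strip, only finitely many values of $|x|$ compete. Reasoning exactly as just before \Cref{thmmPhatA}: if a non-zero non-unit $x$ attained the minimum, choose a prime $\pi\in\bb{P}$ with $\pi\mid x$; then $|x/\pi|<|x|$, while $|\sigma(x/\pi)|=|\sigma(x)|/|\sigma(\pi)|\leq|\sigma(x)|/(2\t)$ remains admissible by \Cref{lemSizeOfPrimesT}, a contradiction. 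Hence the minimal $|x|$ is $\t^{-m}$ with $m$ maximal such that $\t^m<\tfrac{4\t T_{j_1,j_2}}{\sqrt{\t+2}}$, whence $d_{j_1,j_2}=\tfrac{\sqrt{\t+2}\,\theta(\widehat{\mc{P}_\e'})}{10}\,\t^{-m}$.

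It remains to pin down the optimal $(j_1,j_2)$ and to realise the bound there. Because $\mc{W}_{2,\e}=-\t\mc{W}_1+\e$ and $\mc{W}_{3,\e}=\t\mc{W}_1+\e$ are the $\t$-dilated, hence largest, windows, $T_{j_1,j_2}$ is largest and $d_{j_1,j_2}$ smallest exactly for $j_1,j_2\in\{2,3\}$; thus $\min_{j_1,j_2}m_{j_1,j_2}\geq\min_{j_1,j_2}d_{j_1,j_2}$ is attained there. For the reverse inequality I would exhibit genuine $x,y\in\Z[\z]$ lying in these $\kappa$-classes whose wedge is the extremal unit: for $j_1=2$ take $x=\z^2+\z^3=-\t$ and $y=y_1-\t^{m-1}\z$ with $y_1\in\Z[\t]$ chosen so that $y=\sum_j k_j\z^j$ with $\sum_j k_j=j_1$, and symmetrically $x=\t$, $y=y_1+\t^{m-1}\z$ for $j_2=3$; the choice of $m$ then makes both the physical-plane and internal-plane area inequalities in the definition of $m_{j_1,j_2}$ hold, so $m_{j_1,j_2}=d_{j_1,j_2}$. \textbf{This last verification is the main obstacle}: one must produce an actual pair of lattice points in the prescribed $\kappa$-classes whose exterior product equals the extremal unit and which simultaneously satisfies the internal window constraints encoded in $T_{j_1,j_2}$; the rest is bookkeeping with the explicit matrices and the arithmetic of $\Z[\t]$. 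Putting the pieces together gives $m_{\widehat{\mc{P}_\e}}=\tfrac{\sqrt{\t+2}\,\theta(\widehat{\mc{P}_\e'})}{10}\,\t^{-m}$ with $m$ maximal such that $\t^m<\tfrac{4\t}{\sqrt{\t+2}}\max_{2\leq j_1,j_2\leq 3}T_{j_1,j_2}$; finally \Cref{lemDensitiesGeneral} applied to $\mc{P}_\e'=\sqrt{2/5}\,\mc{P}_\e$ yields $\theta(\widehat{\mc{P}_\e'})=\tfrac52\,\theta(\widehat{\mc{P}_\e})$, and substituting produces the stated formula $m_{\widehat{\mc{P}_\e}}=\tfrac{\t^{-m}\sqrt{\t+2}\,\theta(\widehat{\mc{P}_\e})}{4}$.
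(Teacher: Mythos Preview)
Your approach is essentially identical to the paper's (the paper's argument is in fact spread through the discussion immediately preceding the theorem statement, so what you have written is a faithful reorganisation of that discussion into a single proof). The decomposition $m_{\widehat{\mc{P}_\e'}}=\min_{j_1,j_2}m_{j_1,j_2}$, the wedge identities, the lower bound $m_{j_1,j_2}\geq d_{j_1,j_2}$, the reduction of the infimum to units via \Cref{lemSizeOfPrimesT}, the identification of the extremal indices $j_1,j_2\in\{2,3\}$, and the explicit realisation of the extremal unit by a pair $(x,y)$ with prescribed $\kappa$-classes are all exactly as in the paper; your final density conversion $\theta(\widehat{\mc{P}_\e'})=\tfrac52\theta(\widehat{\mc{P}_\e})$ is a welcome clarification.

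There is one slip in your explicit construction. You fix $m$ as the theorem's $m$ (maximal integer with $\t^m<\tfrac{4\t T_{j_1,j_2}}{\sqrt{\t+2}}$), so the extremal element of $\Z[\t]$ is $\t^{-m}$. But then taking $x=-\t$ and $y=y_1-\t^{m-1}\z$ yields $x_1y_2-x_2y_1=(-\t)(-\t^{m-1})=\t^{m}$, not $\t^{-m}$; with this choice the internal constraint $|\sigma(z)|<\tfrac{4\t T_{j_1,j_2}}{\sqrt{\t+2}}$ fails whenever $m>0$ (and $m=3$ in the worked example). The paper avoids this by \emph{re}-introducing $m$ locally as the exponent of the minimiser itself (``Suppose $x=\tau^m$ \ldots\ gives the minimal $|x|$''), so its construction variable is the negative of the theorem's $m$. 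The fix in your write-up is simply to replace $\t^{m-1}$ by $\t^{-m-1}$ in the definition of $y$; then $z=\pm\t^{-m}$ as required, and both area constraints follow immediately from the definition of $m$. (A second cosmetic point: like the paper, you write $\sum_j k_j=j_1$ for $y$ where $j_2$ is intended; since $\kappa\!\mid_{\Z[\t]}$ is surjective, $y_1$ can be chosen to place $y$ in any prescribed class, so this is purely notational.)
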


	Let 
	\begin{equation}
	\label{eqnDeltaP}
	\Delta(\mc{P}_\e)=\left\{\Delta(x,y)\mid x,y\in\Z[\z],x\neq y, \Delta(\sigma(x),\sigma(y))<\underset{{2\leq j_1,j_2\leq 3}}{\max}T_{j_1,j_2}\right\}.
	\end{equation}
	If $\e\in \C$ satisfies $|\e|<0.1$ it follows from \Cref{thmMPHatPenrose} that $m_{\widehat{\mc{P}_\e}}=\theta(\widehat{\mc{P}_\e})\min \Delta(\mc{P}_\e)$. Furthermore, $\#(\Delta(\mc{P}_\e)\cap(0,c))<\infty$ for each $c>0$.
	
	We illustrate \Cref{thmMPHatPenrose} by an example. As in the discussion after \Cref{thmDenVisPenrose}, let $\gamma=\frac{1}{101}(2,1,-2-2,1)$ and set $\e_0=\sum_{j=0}^4\gamma_j\z^{2j}$. Recall that $\mc{P}_{\e_0}$ is the vertex set of a rhombic Penrose tiling and that $|\e_0|<0.1$. It is easily verified numerically that $T_{j_1,j_2}$ is maximal when $(j_1,j_2)\in\{(2,3),(3,2)\}$ and that then $T_{j_1,j_2}=1.2554\ldots$ 
	and hence $\frac{4\t T_{j_1,j_2}}{\sqrt{\t+2}}=4.2718\ldots$. Recall from the discussion after \Cref{thmDenVisPenrose} that $\theta(\widehat{\mc{P}_{\e_0}})=0.6843\ldots$. Since $\t^3<4.2718\ldots<\t^4$, \Cref{thmMPHatPenrose} gives
	\[m_{\widehat{\mc{P}_{\e_0}}}=\frac{\t^{-3}\sqrt{\t+2}\,\theta(\widehat{\mc{P}_{\e_0}})}{4}=0.07681\ldots\]
	
	We now present some numerical support for the above value of the minimal gap. \Cref{fig:gapsInPenrose} was produced by generating the $8\,599\,221$ visible points of $\widehat{\mc{P}_{\e_0}}$ in $B_{2000}(0)$ and then calculating $\widehat{\delta_T}$ numerically for all $T\in\{100+10i\mid i\in\{0,1,\ldots,190\}\}$. 
	
	\begin{figure}[H]
		\centering
		\includegraphics[width=0.6\linewidth]{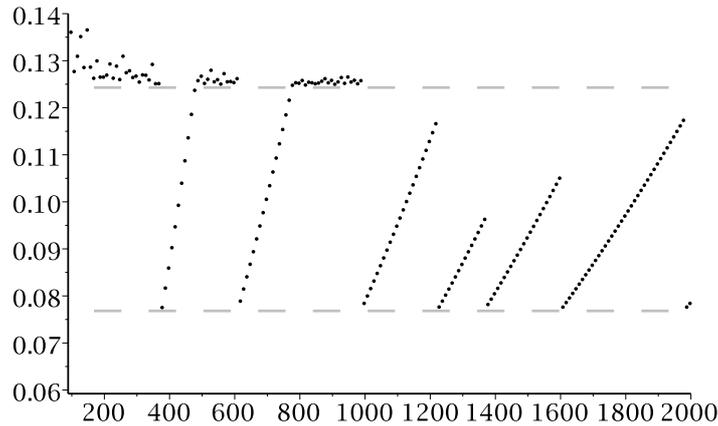}
		\caption{Plot of $(T,\widehat{\delta_T})$ for $T\in\{100+10i\mid i\in\{0,1,\ldots,190\}\}$, for $\mc{P}_{\e_0}$ with $\e_0=\sum_{j=0}^4\gamma_j\z^{2j}$, $\gamma=\frac{1}{101}(2,1,-2-2,1)$. The horizontal dashed lines are $y=m_{\widehat{\mc{P}_{\e_0}}}$ and $y=\t m_{\widehat{\mc{P}_{\e_0}}}$.}
		\label{fig:gapsInPenrose}
	\end{figure}
	
 	We conclude this paper with a comparison of \Cref{fig:ABgapGraph700} and \Cref{fig:gapsInPenrose}. Let $\mc{A}_\mc{W}$ be the Ammann--Beenker point set. By \Cref{thmmPhatA}, we have $m_{\widehat{\mc{A}_\mc{W}}}=\Delta_1\theta(\widehat{\mc{A}_\mc{W}})$, where $\Delta_1:=\min \Delta(\mc{A}_\mc{W})=\frac{1}{2\sqrt{2}}$ (cf.\ \eqref{eqnDeltaA}). Consider a gap $\widehat{d}_{T,i}$ formed by $x,x'\in \widehat{\mc{A}_\mc{W}}\subset \Z[\z]$, $\norm{x}\leq \norm{x'}\leq T$. Let $v$ be the angle between $x,x'$ so that $\widehat{d}_{T,i}=\widehat{N}_T\frac{v}{2\pi}$. By taking $T$ large, we may suppose that $v$ is small, so that $v$ is close to $\sin v$. From $\Delta(x,x')=\frac{1}{2}\norm{x}\norm{x'}\sin v\in\Delta(\mc{A}_\mc{W})$, $\widehat{N}_T\sim \theta(\widehat{\mc{A}_\mc{W}})T^2\pi$ and the fact that $\Delta(\mc{A}_\mc{W})\cap (0,c)$ is finite for each $c>0$, it follows that for large $T$, if $\widehat{d}_{T,i}$ is close to $m_{\widehat{\mc{A}_\mc{W}}}$, then $\norm{x}$, $\norm{x'}$ must both be close to $T$ and $\Delta(x,x')=\Delta_1$. That is, the triangle formed by $0,x,x'$ must have area $\Delta_1$ and be nearly isosceles. In this case $\Delta_1^\sigma:=\Delta(\sigma(x),\sigma(x'))=\frac{1}{2\sqrt{2}}$.
 	
 	Similarly, if $T$ is large, and $y,y'\in \widehat{\mc{P}_{\e_0}}$ forms a gap $\widehat{d}_{T,i}$ which is close to $m_{\widehat{\mc{P}_{\e_0}}}$, then the triangle formed by $0,y,y'$ must be nearly isosceles and have area equal to $\Delta_2:=\min \Delta(\mc{P}_{\e_0})=\frac{\t^{-3}\sqrt{\t+2}}{4}$ (cf.\ \eqref{eqnDeltaP}). We then have $\Delta_2^\sigma:=\Delta(\sigma(y),\sigma(y'))=\frac{\t^{2}\sqrt{\t+2}}{4}$.
 	
 	Assume now that for those $T$ we have considered numerically, the points $\sigma(x)$, $x\in \widehat{\mc{A}_{\mc{W}}}\cap B_T(0)$ are well-distributed in $\mc{W}$, and that $\sigma(y)$, $y\in\widehat{\mc{P}_{\e_0}}\cap B_T(0)$ are well-distributed in $\mc{W}_{1,\e_0},\ldots,\mc{W}_{4,\e_0}$. Observe that the difference  $T_{2,3}-\Delta_2^\sigma=0.01\ldots$ is quite small. Thus, for points $y,y'\in \widehat{\mc{P}_{\e_0}}\cap B_T(0)$ with $\Delta(y,y')=\Delta_2$, the points $\sigma(y),\sigma(y')$ are forced to be near vertices of $\mc{W}_{2,\e_0}$ and $\mc{W}_{3,\e_0}$, respectively. On the other hand, the difference $T_\mc{W}-\Delta_1^\sigma=\frac{1}{2}$ is substantially larger, so the restriction of the location in $\mc{W}$ of the conjugates of points $x,x'\in \widehat{\mc{A}_\mc{W}}$ with $\Delta(x,x')=\Delta_1$ is not as severe. Under the above well-distribution assumption, we find a possible explanation to the observation that it seems more likely that $\widehat{\delta}_T$ is close to $m_{\widehat{\mc{P}}}$ in the case of $\mc{A}_\mc{W}$ (see \Cref{fig:ABgapGraph700}) than in the case of $\mc{P}_{\e_0}$ (see \Cref{fig:gapsInPenrose}) for comparable values of $T$. It should be recalled that, in both cases, $\widehat{\delta}_T\to m_{\widehat{\mc{P}}}$ by \Cref{lemDeltaConvergence}.
	
	\bibliographystyle{siam}
	\bibliography{bibl}	

\end{document}